
\documentclass[11pt]{amsart}

\usepackage{amsfonts, amsmath, amscd}
\usepackage[psamsfonts]{amssymb}

\usepackage{amssymb}

\usepackage{pb-diagram}

\headheight=0in
\headsep = 0.51in
\topmargin=0in
\textheight=8.950in
\textwidth=7.1in
\oddsidemargin=-0.19in
\evensidemargin=-0.19in
\parindent=0.2in


\newcommand{\uuu}{\mathbf{u}}
\newcommand{\NN}{\mathbb{N}}

\newcommand{\TT}{\mathbb{T}}
\newcommand{\ZZ}{\mathbb{Z}}
\newcommand{\xxx}{\mathbf{x}}

\newcommand{\SP}{\mathbf{sp}}

\newtheorem{theorem}{Theorem}
\newtheorem{lemma}[theorem]{Lemma}
\newtheorem{remark}[theorem]{Remark}
\newtheorem{proposition}[theorem]{Proposition}
\newtheorem{definition}[theorem]{Definition}
\newtheorem{corollary}[theorem]{Corollary}
\newtheorem{problem}[theorem]{Problem}

\newtheorem{fact}[theorem]{Fact}
\newtheorem{example}[theorem]{Example}

\numberwithin{equation}{section}
\numberwithin{theorem}{section}

\begin{document}


\title[On topological properties of  the group of the null sequences]{On topological properties of  the group of the null sequences valued in an Abelian topological group}

\author{S.S. Gabriyelyan}
\email{saak@math.bgu.ac.il}

\address{Department of Mathematics, Ben-Gurion University of the
Negev, Beer-Sheva, P.O. 653, Israel}

\keywords{group of null sequences, locally compact group, monothetic  group, $(E)$-space, strictly angelic space, \v{S}-space, respect compactness, the Schur property}
\subjclass[2000]{Primary 22A10, 22A35, 43A05; Secondary 43A40,  54H11}

\begin{abstract}
Following \cite{DMPT}, denote by  $\mathfrak{F}_0$ the functor on the category $\mathbf{TAG}$ of all Hausdorff Abelian topological groups and continuous homomorphisms  which passes each $X\in \mathbf{TAG}$ to the group of all $X$-valued null sequences endowed with  the uniform topology. We prove that if $X\in \mathbf{TAG}$ is an $(E)$-space (respectively, a strictly angelic space or a \v{S}-space), then $\mathfrak{F}_0 (X)$  is an $(E)$-space (respectively, a strictly angelic space or a \v{S}-space).  We study respected properties for topological groups in particular from categorical point of view. Using this investigation we show that for a locally compact Abelian (LCA) group $X$ the following are equivalent: 1) $X$ is totally disconnected, 2) $\mathfrak{F}_0 (X)$ is a Schwartz group, 3) $\mathfrak{F}_0 (X)$ respects compactness, 4) $\mathfrak{F}_0(X)$ has the Schur property. So, if a LCA group $X$ has non-zero connected component, the group $\mathfrak{F}_0(X)$ is a reflexive non-Schwartz group which does not have the Schur property. We prove also that  for every  compact connected metrizable Abelian group $X$ the group $\mathfrak{F}_0 (X)$ is  monothetic that generalizes a result by Rolewicz for $X=\TT$. 
\end{abstract}






\maketitle

\section{Introduction}

{\bf Properties and functors in $\mathbf{TAG}$}.
Denote by  $\mathbf{TAG}$  the category   of all Hausdorff Abelian topological groups  and continuous homomorphisms.
Below we consider some important subcategories, functors and properties in $\mathbf{TAG}$ which are essentially used in the article.


(A) {\it Properties in} $\mathbf{TAG}$. We consider the following three types  of properties in  $\mathbf{TAG}$. The properties of the type $\mathcal{P}_s$ are those ones which are preserved under taking closed subgroups (i.e., if $X\in \mathbf{TAG}$ has a property $\mathcal{P}$ and $H$ is a closed subgroup of $X$, then $H$ also has $\mathcal{P}$): (pre)compactness, local (pre)compactness, realcompactness, sequential compactness, countable compactness, completeness, sequential completeness, normality,  sequentiality, Fr\'{e}chet-Urycohnness, metrizability etc. \cite{ArT, DikTka, Eng, GiJ}. The properties which are preserved under taking quotients form the type  $\mathcal{P}_q$:  connectedness and pseudocompactnes etc. \cite{ArT, Eng, GiJ}. We denote by $\mathcal{P}_n$ the properties which are not preserved under taking neither closed subgroups nor quotients:  (Pontryagin) reflexivity etc. (see, the survey \cite{CDM}).

Let $\pmb{\mathbb{A}}$ be a subcategory of $\mathbf{TAG}$ and let  $F: \mathbf{TAG} \to \mathbf{TAG}$ be a functor. We say that $\pmb{\mathbb{A}}$ is {\it $F$-invariant} if $F(X)\in \pmb{\mathbb{A}}$ for every $X\in \pmb{\mathbb{A}}$.
We say that the functor $F$ {\it preserves a property $\mathcal{P}$ on $\pmb{\mathbb{A}}$} if $F(X)\in \mathbf{TAG}\cap \mathcal{P}$ for each $X\in \pmb{\mathbb{A}} \cap \mathcal{P}$.  The next two general problems are natural:
\begin{problem} \label{problem1}
Characterize all groups $X\in \pmb{\mathbb{A}}\cap \mathcal{P}$ such that $F(X)\in \mathbf{TAG}\cap \mathcal{P}$. Does the functor $F$ preserve the property $\mathcal{P}$ on $\pmb{\mathbb{A}}$?
\end{problem}

\begin{problem} \label{problem2}
Let $F(X) \in \pmb{\mathbb{A}}\cap \mathcal{P}$ for $X\in \pmb{\mathbb{A}}$. Does also $X\in \pmb{\mathbb{A}} \cap\mathcal{P}$?
\end{problem}


(B) {\it The Bohr functor}. For an Abelian topological group  $X$ we   denoted by $\widehat{X}$ the group of all continuous characters on $X$.  The group $X$ is called {\it maximally almost periodic} (MAP) if $\widehat{X}$ separates the points of $X$. Denote by $\mathbf{MAPA}$ the full subcategory of  $\mathbf{TAG}$ consisting of all MAP Abelian groups. The class $\mathbf{LCA}$ of all  locally compact Abelian (LCA for short) groups is the most important full subcategory of $\mathbf{MAPA}$.
For    $(X,\tau)\in \mathbf{MAPA}$ we denote by $\sigma(X,\widehat{X})$ or $\tau^+$ the {\it weak topology} on $X$, i.e., the smallest group topology in $X$ for which the elements of $\widehat{X}$ are continuous.
The topology $\tau^+$ is called also {\it the Bohr modification} of $\tau$. Denote by $\mathfrak{B}: \mathbf{MAPA} \to \mathbf{MAPA}$ the Bohr functor, i.e., $\mathfrak{B}(X,\tau):= X^+$ for every $(X,\tau)\in \mathbf{MAPA}$, where $X^+ :=(X,\tau^+)$. Note that $\mathfrak{B}$ is finitely multiplicative. It is well-known that the groups $X$ and $X^+$ have the same set of continuous characters, and  $\mathfrak{B}(X)=X$ if and only if $X$ is precompact (see \cite{ArT, CoR}).

It is known that, on the class $\mathbf{LCA}$ the Bohr functor $\mathfrak{B}$ preserves connectedness \cite{Tri91}, covering dimension \cite{Her98, Tri91} and realcompactness \cite{CHTr}. On the other hand, $X^+$ is normal if and only if $X$ is $\sigma$-compact \cite{Tri94}. Note also that, if $X$ is discrete  and infinite, then $X^+$ is sequentially complete \cite{DikTka} and non-pseudocompact \cite{CS73}. The same holds for the general case: if $X$ is a non-compact LCA group, then  $X^+$ is sequentially complete and non-pseudocompact \cite{Dikr}.


(C) {\it Respected properties}.
Using the Bohr functor we can define {\it respected properties} in subcategories of $\mathbf{MAPA}$.
Following \cite{ReT3}, if $\mathcal{P}$ denotes a topological property, then we say that a $MAPA$ group $X$ {\it respects} $\mathcal{P}$ if $X$ and $X^+$ have the same subspaces with $\mathcal{P}$.
Let $\pmb{\mathbb{A}}$ be a subcategory of  $\mathbf{MAPA}$.
\begin{problem} \label{problem3}
Characterize  $X \in \pmb{\mathbb{A}}$ which respect $\mathcal{P}$.
\end{problem}
We say that $\mathcal{P}$ is a {\it respected property} in  $\pmb{\mathbb{A}}$ if every $X\in \pmb{\mathbb{A}}$  respects $\mathcal{P}$.
\begin{problem} \label{problem30}
Which topological properties are respected for a given subcategory $\pmb{\mathbb{A}}$ of $\mathbf{MAPA}$?
\end{problem}

Recall that $X\in \mathbf{MAPA}$ is said to have: 1) the {\it  Glicksberg property} or $X$  {\it respects compactness} if the compact subsets of $X$ and $X^+$ coincide, and 2) the {\it Schur property} or $X$  {\it respects sequentiality} if $X$ and $X^+$ have the same set of convergent sequences. Clearly, if $X$ respects compactness, then it   has the Schur property.

Let $X\in \mathbf{LCA}$ and let $\mathcal{P}$ be a topological property. The question of whether $X$ respects $\mathcal{P}$ has been intensively studied for many natural properties $\mathcal{P}$. The famous Glicksberg theorem \cite{Gli} states that every LCA group $X$ respects compactness, and hence $X$ has the Schur property. So the  Glicksberg and the Schur properties are respected properties in $\mathbf{LCA}$. Trigos-Arrieta \cite{TriAr91, Tri91} proved that also  pseudocompactness  and functional boundedness are respected properties in $\mathbf{LCA}$. Banaszczyk and Mart\'{\i}n-Peinador \cite{BaMP2} generalized these results to the class $\mathbf{Nuc}$ of all nuclear groups. Hern\'{a}ndez, Galindo and Macario \cite{HGM} characterized Banach spaces which have the Schur property. Recently Au{\ss}enhofer \cite{Aus2} proved that every locally quasi-convex Schwartz group respects compactness.
These results lead us to the next question which is also considered in the article:
\begin{problem} \label{problem4}
Let $F$ be a functor in $\mathbf{MAPA}$ and let $\mathcal{P}$ be a respected property in a subcategory $\pmb{\mathbb{A}}$ of  $\mathbf{MAPA}$. Characterize  $X\in \pmb{\mathbb{A}}$ such that $F(X)$ respects $\mathcal{P}$.
\end{problem}


(D) {\it The functor $\mathfrak{F}_0$ in $\mathbf{TAG}$}.
Now we define the functor $\mathfrak{F}_0$ in $\mathbf{TAG}$ introduced by  Dikranjan, Mart\'{\i}n-Peinador and  Tarieladze in \cite{DMPT}.
Let $X$ be an Abelian topological group and let $\mathcal{N}(X)$ be the filter of all open neighborhoods at zero in $X$. Denote by $X^\mathbb{N}$ the group of all sequences $\xxx =(x_n)_{n\in\NN}$. The subgroup of $X^\mathbb{N}$ of all sequences eventually equal to zero we denote by $X^{(\mathbb{N})}$.
It is easy to check  that the collection $\{ V^\mathbb{N}: V \in \mathcal{N}(X)\}$ forms a base at $0$ for a group topology in $X^\mathbb{N}$. This topology  is called {\it the uniform topology} and is denoted by $\mathfrak{u}$ \cite{DMPT}. Following \cite{DMPT}, denote by $c_0 (X)$ the following subgroup of $X^\mathbb{N}$
\[
c_0 (X) := \left\{ (x_n)_{n\in\mathbb{N}} \in X^\mathbb{N} : \; \lim_{n} x_n = 0 \right\}.
\]
The uniform group topology on $c_0 (X)$ induced from $(X^\mathbb{N}, \mathfrak{u})$ we denote by  $\mathfrak{u}_0$. Following \cite{DMPT}, define the functor $\mathfrak{F}_0$ on  the category $\mathbf{TAG}$ by the assignment
\[
X\to \mathfrak{F}_0 (X) :=(c_0 (X), \mathfrak{u}_0).
\]
If $X=\mathbb{R}$, then $\mathfrak{F}_0 (\mathbb{R})$ coincides with the classical Banach space $c_0$. The groups of the form $\mathfrak{F}_0 (X)$  were  thoroughly studied in \cite{DMPT, Ga7, Ga8}.
In particular, $\mathfrak{F}_0$ is finitely multiplicative \cite{Ga7}.

Clearly, for every natural number $n$, the group $X^n$ is both a direct summand and a quotient group of $\mathfrak{F}_0 (X)$. So, if $\mathfrak{F}_0(X)$ has a topological property of one of the types  $\mathcal{P}_s$ or $\mathcal{P}_q$, then $X^n$ has the same property for every $n\in\NN$. Thus, if $\mathfrak{F}_0$ preserves a property $\mathcal{P} \in \mathcal{P}_s \cup \mathcal{P}_q$, then $\mathcal{P}$ must be preserved by any finite cartesian power of $X$.

The functor $\mathfrak{F}_0$ preserves many important topological properties.
\begin{fact} \label{fTopPropF}
Let $X\in \mathbf{TAG}$. Then
\begin{enumerate}
\item[{\rm (i)}] {\rm (\cite[3.4]{DMPT})} Let $\mathcal{P}$ denote one of the following topological properties from $\mathcal{P}_s \cup \mathcal{P}_q$: completeness, sequential completeness, metrizability, separability, maximal almost periodicity, local quasi-convexity and connectedness.  Then $\mathfrak{F}_0 (X)$ has $\mathcal{P}$ if and only if $X$ has $\mathcal{P}$.
\item[{\rm (ii)}] {\rm (\cite[3.1]{DMPT})} $\mathfrak{F}_0 (X)$ is precompact if and only if $X=\{ 0\}$.
\item[{\rm (iii)}] {\rm (\cite{Ga8})} $\mathfrak{F}_0 (X)$ is locally precompact if and only if $X$ is discrete.
\end{enumerate}
\end{fact}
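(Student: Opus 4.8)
The plan is to run everything off the canonical base of neighborhoods $\{V^\NN\cap c_0(X):V\in\mathcal{N}(X)\}$ at $0$ in $\mathfrak{F}_0(X)$, together with two structural facts: first, that (taking $n=1$ in the observation already recorded before the statement) $X$ sits inside $\mathfrak{F}_0(X)$ simultaneously as the direct summand $\{(x,0,0,\dots):x\in X\}$ and as the quotient under the first coordinate projection; second, that the eventually-zero sequences $X^{(\NN)}=\bigcup_n X^n$ are dense in $c_0(X)$ (given $\xxx\in c_0(X)$ and $V\in\mathcal{N}(X)$, all but finitely many $x_n$ already lie in $V$, so truncating the tail gives an element of $X^{(\NN)}$ inside $\xxx+(V^\NN\cap c_0(X))$). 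With this in hand the reverse implication of (i) is uniform: since $X$ is a closed subgroup of $\mathfrak{F}_0(X)$, every property of type $\mathcal{P}_s$ — completeness, sequential completeness, metrizability, local quasi-convexity, maximal almost periodicity — descends to $X$; and since $X$ is a quotient of $\mathfrak{F}_0(X)$, connectedness and separability descend as well.

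For the forward implication of (i) I would split the seven properties into an elementary group and a duality-theoretic group. Metrizability and separability are immediate: a countable base $\{V_k\}$ at $0$ in $X$ produces the countable base $\{V_k^\NN\cap c_0(X)\}$, and a countable dense set $D\ni 0$ in $X$ produces the countable dense set of finitely supported sequences with entries in $D$, density being exactly the density argument for $X^{(\NN)}$ above. Connectedness follows from that same density: $X^{(\NN)}=\bigcup_n X^n$ is an increasing union of connected subsets all containing $0$ (on a fixed finite coordinate set the uniform and product topologies agree), hence connected, and the closure $c_0(X)$ of a connected set is connected. Completeness and sequential completeness I would get by the classical $c_0$ argument transcribed to groups: a Cauchy filter (resp.\ sequence) converges coordinatewise with uniform control, the limit again satisfies the null condition and hence lies in $c_0(X)$, and the convergence is uniform.

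The hard part is the remaining pair, and I expect local quasi-convexity to be the genuine obstacle. Maximal almost periodicity is manageable: the coordinate projections $\pi_n\colon\mathfrak{F}_0(X)\to X$ are continuous because the uniform topology refines the product topology, so for $0\neq\xxx\in c_0(X)$ one picks $n$ with $x_n\neq0$ and $\chi\in\widehat X$ with $\chi(x_n)\neq1$, whereupon $\chi\circ\pi_n$ is a continuous character of $\mathfrak{F}_0(X)$ separating $\xxx$ from $0$. For local quasi-convexity I would have to show that $V^\NN\cap c_0(X)$ is quasi-convex in $\mathfrak{F}_0(X)$ whenever $V$ is quasi-convex in $X$, and this forces one to describe the continuous characters of $\mathfrak{F}_0(X)$ precisely enough to carry out a polar computation relating the quasi-convex hull of $V^\NN$ to that of $V$; this is where the real technical weight sits.

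Finally, parts (ii) and (iii) are two variants of one non-total-boundedness argument. If $X\neq\{0\}$, choose $0\neq x\in X$ and, by Hausdorffness, $V\in\mathcal{N}(X)$ with $x\notin V$; the sequences $e_nx=(0,\dots,0,x,0,\dots)$ with $x$ in position $n$ are pairwise non-congruent modulo $V^\NN$, so no finite union of translates of $V^\NN\cap c_0(X)$ covers them and $\mathfrak{F}_0(X)$ is not precompact, which gives (ii). For (iii), if $X$ is discrete then $\{0\}\in\mathcal{N}(X)$ makes $\{0\}^\NN$ a trivial, hence precompact, neighborhood, so $\mathfrak{F}_0(X)$ is discrete and a fortiori locally precompact; conversely, a precompact neighborhood of $0$ contains some $W^\NN\cap c_0(X)$, and were $X$ non-discrete one could pick $0\neq x\in W$ and $V\in\mathcal{N}(X)$ with $x\notin V$ and rerun the $e_nx$ argument inside $W^\NN$ to contradict precompactness.
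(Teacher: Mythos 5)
This statement appears in the paper only as a quoted Fact with external citations (\cite[3.1, 3.4]{DMPT} and \cite{Ga8}); the paper contains no proof of it, so your proposal must be judged on its own merits rather than against an internal argument. Most of it is sound: the closed-subgroup/quotient reductions for the reverse implications of (i), the countable base $\{V_k^\NN\cap c_0(X)\}$ for metrizability, the finitely supported $D$-sequences for separability, the density of $X^{(\NN)}=\bigcup_n X^n$ together with the agreement of uniform and product topologies on finite coordinate blocks for connectedness, the characters $\chi\circ\pi_n$ for maximal almost periodicity, and the $e_nx$ separation argument for (ii) and (iii) are all correct in outline. One small repair in (ii)/(iii): two points lying in a single translate $f+\left(W^\NN\cap c_0(X)\right)$ have difference in $(W-W)^\NN$, not $W^\NN$, so you must first fix $V$ with $x\notin V$ and then run the pigeonhole with a symmetric $W$ satisfying $W+W\subseteq V$ --- exactly the bookkeeping the paper packages into Proposition \ref{p-UD} and Lemma \ref{l-FB} (infinite uniformly discrete sets are not precompact), which you are in effect reproving inline.

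The genuine gap is local quasi-convexity, which you explicitly leave open on the grounds that it ``forces one to describe the continuous characters of $\mathfrak{F}_0(X)$ precisely enough to carry out a polar computation.'' It does not. If $V$ is quasi-convex in $X$ and $\xxx\in c_0(X)\setminus\left(V^\NN\cap c_0(X)\right)$, pick $n$ with $x_n\notin V$ and $\chi\in V^\triangleright$ with $\chi(x_n)\notin\TT_+$; then $\chi\circ\pi_n$ is a continuous character of $\mathfrak{F}_0(X)$, it belongs to $\left(V^\NN\cap c_0(X)\right)^\triangleright$ because every $\yyy$ in that set has $y_n\in V$, and it sends $\xxx$ outside $\TT_+$. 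Hence each basic neighborhood $V^\NN\cap c_0(X)$ with $V$ quasi-convex is itself quasi-convex --- the same coordinate-projection trick you already used for MAP --- and no description of the full dual group (that is the much harder Fact \ref{f3}, needed in the paper only for the LCA duality and Schur-property results) enters at all. With that two-line insert, plus writing out the completeness step you only gesture at (namely: $(X^\NN,\mathfrak{u})$ is complete when $X$ is, and $c_0(X)$ is $\mathfrak{u}$-closed in it by the standard tail estimate with $W+W\subseteq V$), your proposal becomes a complete proof of the Fact.
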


Let $\mathbf{u} =\{ u_n \}_{n\in\omega}$ be a sequence in the dual group $\widehat{X}$ of  an Abelian topological group $X$ and let $H$ be a subgroup of $X$. Following \cite{DMT}, we set
\[
s_{\mathbf{u}} (X):=\{ x\in X: \; (u_n, x)\to 1 \},
\]
and say that a subgroup $H$ is $\mathfrak{g}$-{\it closed} in  $X$ if
\[
H= \bigcap_{ \mathbf{u} \in \widehat{X}^\NN } \left\{ s_{\mathbf{u}} (X) : \; H \leq s_{\mathbf{u}} (X) \right\}.
\]
Some properties of the closure operator $\mathfrak{g}$ are given in \cite{Dik-cl}.

The question of whether $c_0(X)$ is a $\mathfrak{g}$-closed subgroup of $X^\NN$ was considered in \cite{Ga7}. For the groups with the Schur property we have:
\begin{fact} {\rm (\cite{Ga7})} \label{fGclosed}
Let $X$ be a MAP Abelian group. If $X$ has the Schur property, then $c_0(X)$ is a $\mathfrak{g}$-closed subgroup of $X^\NN$.
\end{fact}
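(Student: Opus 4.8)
The plan is to verify directly the nontrivial inclusion in the definition of $\mathfrak{g}$-closedness for $H=c_0(X)$ inside the ambient group $X^\NN$. Every subgroup $s_{\mathbf{u}}(X^\NN)$ occurring in the intersection $\bigcap\{s_{\mathbf{u}}(X^\NN):c_0(X)\le s_{\mathbf{u}}(X^\NN)\}$ contains $c_0(X)$ by hypothesis, so the inclusion $c_0(X)\subseteq\bigcap\{s_{\mathbf{u}}(X^\NN):c_0(X)\le s_{\mathbf{u}}(X^\NN)\}$ is automatic. Thus it suffices to show the reverse inclusion, i.e.\ to produce, for each $\mathbf{x}=(x_n)\in X^\NN\setminus c_0(X)$, a single sequence $\mathbf{u}=(u_k)_k$ of continuous characters of $X^\NN$ with $c_0(X)\le s_{\mathbf{u}}(X^\NN)$ but $\mathbf{x}\notin s_{\mathbf{u}}(X^\NN)$.

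First I would use the Schur property to reduce the problem to a single character of $X$. Since $\mathbf{x}\notin c_0(X)$, the sequence $(x_n)$ does not converge to $0$ in $X$; by the Schur property it then fails to converge to $0$ in $X^+$ as well. As convergence in the Bohr topology is precisely convergence against every continuous character, this yields a character $\chi\in\widehat{X}$ and a neighborhood $W$ of $1$ in $\TT$ such that $\chi(x_n)\notin W$ for infinitely many $n$; passing to these indices gives an increasing sequence $(n_k)_k$ with $\chi(x_{n_k})\notin W$ for every $k$.

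The key construction is to transport $\chi$ to $X^\NN$ along the coordinate projections. Set $u_k:=\chi\circ\pi_{n_k}$, where $\pi_n\colon X^\NN\to X$ is the $n$-th projection. Each $\pi_n$ is continuous for the uniform topology (it carries the basic neighborhood $V^\NN$ into $V$), so each $u_k$ is a genuine continuous character of $X^\NN$. With this choice $(u_k,\mathbf{x})=\chi(x_{n_k})\notin W$ for all $k$, so $(u_k,\mathbf{x})\not\to 1$ and hence $\mathbf{x}\notin s_{\mathbf{u}}(X^\NN)$; on the other hand, for any $\mathbf{y}=(y_n)\in c_0(X)$ we have $(u_k,\mathbf{y})=\chi(y_{n_k})\to 1$, because $y_n\to 0$ in $X$ and $\chi$ is continuous, so $c_0(X)\le s_{\mathbf{u}}(X^\NN)$. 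This exhibits the required separating sequence and completes the argument.

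I do not expect a serious obstacle here: the only points requiring care are the correct reading of the $\mathfrak{g}$-closure as an intersection of subgroups of the form $s_{\mathbf{u}}$ (so that the ``easy'' inclusion is the automatic one) and the immediate continuity of $\chi\circ\pi_n$ for the uniform topology. The Schur property is used exactly once, to pass from non-convergence of $(x_n)$ in $X$ to the existence of a witnessing character $\chi$, while maximal almost periodicity guarantees that $X^+$ is Hausdorff, so that this passage and the whole statement are meaningful.
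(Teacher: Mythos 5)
Your proof is correct. Note first that the paper does not actually prove this statement: it is quoted as Fact~1.6 from the reference \cite{Ga7}, so there is no in-paper proof to match against line by line. Checking your argument on its own merits: the reading of $\mathfrak{g}$-closedness is right (the inclusion $c_0(X)\subseteq\bigcap\{s_{\mathbf{u}}(X^\NN): c_0(X)\le s_{\mathbf{u}}(X^\NN)\}$ is automatic, and it suffices to separate each $\mathbf{x}\notin c_0(X)$ by a single admissible $\mathbf{u}$); the Schur property is invoked in the correct direction (its nontrivial content is $\mathcal{S}(X^+)\subseteq\mathcal{S}(X)$, so $x_n\not\to 0$ in $X$ forces $x_n\not\to 0$ in $X^+$, and since $\tau^+=\sigma(X,\widehat X)$ is the initial topology for $\widehat X$, this yields a character $\chi$ with $(\chi,x_{n_k})\notin W$ along a subsequence); and the characters $u_k=\chi\circ\pi_{n_k}$ are indeed continuous on $(X^\NN,\mathfrak{u})$, exactly as the paper records for the projections $\pi_n$ in Section~2.3, so $\mathbf{u}\in\widehat{X^\NN}^{\,\NN}$, $c_0(X)\le s_{\mathbf{u}}(X^\NN)$, and $\mathbf{x}\notin s_{\mathbf{u}}(X^\NN)$. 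It is worth observing that your construction is the same technique the paper uses for the analogous but harder item (viii) in Section~6 (Theorem~1.8(iii)): there, too, the separating characters $\chi_l=\nu_{n_l}(\nu_{k_l}(g))$ are a single character of $X$ transported along coordinate projections, evaluated at indices where the non-null sequence stays away from zero; the difference is that there the Schur property is unavailable (indeed $\mathfrak{F}_0(X)$ fails it), so the witnessing character $g$ has to be extracted via compactness and metrizability of $X$ from a convergent subsequence $y^{n_l}_{k_l}\to y\neq 0$, whereas in your setting the Schur property hands you $\chi$ directly. Your one use of MAP (Hausdorffness of $X^+$, i.e.\ meaningfulness of the Schur hypothesis) is also the right place for it.
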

Below (see Theorem \ref{t12}) we show that in general the $\mathfrak{g}$-closeness of $c_0(X)$ in $X^\NN$ does not imply that $X$ has the Schur property.

Our interest in the study of the functor $\mathfrak{F}_0 $ is motivated  also by the following arguments. Firstly, if $X$ is metrizable and connected compact Abelian group, then the Bohr modification of $\mathfrak{F}_0 (X)$ is a connected precompact metrizable non-Mackey group  \cite{DMPT} (the definition of Mackey groups and their basic properties see \cite{CMPT}). Secondly, Rolewicz \cite{Rol} observed that the complete metrizable group $\mathfrak{F}_0 (\mathbb{T})$ is monothetic (see a proof of this fact in \cite[pp. 20-21]{DPS}). So monothetic Polish groups need not be compact nor discrete. In particular, these two arguments show that the groups of the form $\mathfrak{F}_0 (X)$ represent a nice source of (counter)examples in different areas of topological algebra. Further, the group $\mathfrak{F}_0 (\mathbb{T})$ plays a significant role in the theory of characterized subgroups of compact Abelian groups (see \cite{Ga, Ga3}). The next result especially emphasizes the importance of the study of the functor $\mathfrak{F}_0$:
\begin{fact} \label{f1}  {\rm (\cite{Ga8})}
For every LCA group $X$, the group $\mathfrak{F}_0 (X)$ is reflexive.
\end{fact}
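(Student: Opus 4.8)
The plan is to show that the canonical evaluation homomorphism $\alpha\colon \mathfrak{F}_0(X)\to \widehat{\widehat{\mathfrak{F}_0(X)}}$ is a topological isomorphism. Several prerequisites come for free from Fact~\ref{fTopPropF}(i): every LCA group is maximally almost periodic, complete and locally quasi-convex, so $\mathfrak{F}_0(X)$ inherits all three. Maximal almost periodicity forces the continuous characters to separate the points of $\mathfrak{F}_0(X)$, whence $\alpha$ is injective, while the standard duality theory for locally quasi-convex groups ensures that $\alpha$ is open onto its image. Thus the entire problem reduces to proving that $\alpha$ is continuous and surjective, and both of these should be read off once the dual and the bidual of $\mathfrak{F}_0(X)$ are computed explicitly.

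To organize that computation I would first reduce to the simplest building blocks. By the structure theorem for LCA groups, $X$ is topologically isomorphic to $\mathbb{R}^n\times G$, where $G$ contains a compact open subgroup $K$. Since $\mathfrak{F}_0$ is finitely multiplicative, a finite product of reflexive groups is reflexive, and $\mathfrak{F}_0(\mathbb{R})=c_0$ is reflexive by Smith's classical theorem on the Pontryagin duality of Banach spaces, it suffices to treat $\mathfrak{F}_0(G)$. Here I would exploit that $K^{\NN}\cap c_0(G)$ is precisely $\mathfrak{F}_0(K)$ and that, $K$ being open in $G$, it is an \emph{open} subgroup of $\mathfrak{F}_0(G)$ with discrete (hence reflexive) quotient. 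Invoking the theorem that a group possessing a reflexive open subgroup is itself reflexive, the problem collapses to the single case of a compact group: it remains to prove that $\mathfrak{F}_0(K)$ is reflexive for every compact Abelian $K$.

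The heart of the matter, and the step I expect to be the main obstacle, is the explicit description of the dual and the bidual of $\mathfrak{F}_0(K)$. Restricting a continuous character $\phi$ of $c_0(K)$ to the $n$-th coordinate yields $\chi_n\in\widehat{K}$ (note $\widehat{K}$ is discrete), and continuity at $0$ forces the family $(\chi_n)$ to satisfy a summability-type constraint so that $\phi((x_n))=\prod_n\chi_n(x_n)$; I would prove that this identifies $\widehat{\mathfrak{F}_0(K)}$ with the resulting subgroup $\Sigma(\widehat{K})$ of $\widehat{K}^{\NN}$, equipped with the compact-open topology. The delicate point is \emph{not} the algebraic identification but the topology: already for $K=\TT$ the group $\Sigma(\widehat{K})$ is, as an abstract group, the summable integer sequences $\ZZ^{(\NN)}$, and only a carefully chosen topology on it — finer than the product topology, otherwise its dual would overshoot to the full $\TT^{\NN}$ — makes its dual return to $c_0(\TT)$. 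So the genuine work is to describe the compact subsets of $\mathfrak{F}_0(K)$ and of $\Sigma(\widehat{K})$ precisely enough to compute $\widehat{\Sigma(\widehat{K})}$ and verify that it equals $c_0(K)$ under evaluation.

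Completeness and local quasi-convexity of $\mathfrak{F}_0(K)$, already secured in the first step, are exactly the ingredients that make this final topological matching work: they are what guarantee that compact subsets of the dual are equicontinuous (giving the continuity of $\alpha$) and that the bijection established by the duality computation is a homeomorphism rather than merely an algebraic isomorphism (giving surjectivity and completing the openness of $\alpha$). Assembling the compact case with the product reduction and Smith's theorem then yields reflexivity of $\mathfrak{F}_0(X)$ for an arbitrary LCA group $X$.
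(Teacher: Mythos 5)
You should first note a point of comparison that is easy to miss: the paper at hand does not prove Fact \ref{f1} at all --- it imports it from \cite{Ga8} --- and the only trace of the actual argument here is Fact \ref{f3}, which records from \cite{Ga8} precisely the backbone your plan aims at: $\mathfrak{F}_0(X)\cong c_0^n\times\mathfrak{F}_0(X_0)$ with $\mathfrak{F}_0(X)^\wedge\cong\ell_1^n\times\mathfrak{F}_0(X_0)^\wedge$, and the description of $\mathfrak{F}_0(X_0)^\wedge$ as the sequences $(g_n)$ eventually contained in $H^\perp$ for some open subgroup $H$, acting by $(\mathbf{g},\xxx)=\lim_n\prod_{i\leq n}(g_i,x_i)$. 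Your skeleton --- structure theorem $X\cong\mathbb{R}^n\times X_0$, finite multiplicativity of $\mathfrak{F}_0$, Smith's theorem for $c_0$ (which the paper itself invokes via \cite{Smith}), then the coordinatewise description of characters with a uniform smallness constraint --- coincides with this route. Your one genuine deviation is the further reduction from $X_0$ to its compact open subgroup $K$, using Fact \ref{p11}(2) (so that $\mathfrak{F}_0(K)=K^{\NN}\cap c_0(X_0)$ is open in $\mathfrak{F}_0(X_0)$) together with the open-subgroup theorem of Banaszczyk--Chasco--Mart\'{\i}n-Peinador \cite{BCM}, by which a group is reflexive if and only if a given open subgroup is (your remark about the discrete quotient is then superfluous). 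The shape of Fact \ref{f3}(2) indicates that \cite{Ga8} works with $X_0$ directly; your extra reduction is legitimate and buys a cleaner target at the price of citing \cite{BCM}. Note also that your ``summability-type constraint'' has an exact form: for compact $K$ every open subgroup $H$ has finite index, so $H^\perp$ is finite and the constraint is that the $\chi_n$ eventually lie in a single fixed finite subgroup $H^\perp$ of the discrete group $\widehat{K}$; for connected $K$ (e.g.\ $K=\TT$) this gives $\widehat{K}^{(\NN)}$, consistent with your $\ZZ^{(\NN)}$.

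The gap is in your last paragraph. The decisive steps --- computing the compact-open topology on $\widehat{\mathfrak{F}_0(K)}$, identifying its compact subsets, proving they are equicontinuous (which is what continuity of $\alpha$ amounts to), and showing every continuous character of the dual is an evaluation (surjectivity) --- are announced but not carried out, and your claim that completeness and local quasi-convexity ``are exactly the ingredients'' that deliver them is untenable. Both properties pass to all closed subgroups and, by Fact \ref{fTopPropF}(i), to $\mathfrak{F}_0(X)$ for every complete locally quasi-convex $X$; if they by themselves forced equicontinuity of compacts in the dual and surjectivity of $\alpha$, then every complete locally quasi-convex (hence MAP) group would be reflexive, Fact \ref{f1} would be a one-line corollary of Fact \ref{fTopPropF}(i) for arbitrary such $X$ rather than the subject of the separate paper \cite{Ga8}, and reflexivity could not be of the type $\mathcal{P}_n$ that Section 1 assigns it. What completeness and local quasi-convexity actually give is only what you correctly stated at the outset: injectivity (via MAP) and relative openness of $\alpha$ (via bipolars of quasi-convex neighborhoods). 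Nor can you fall back on metrizability, where compact subsets of the dual are automatically equicontinuous: after your reduction $K$ is merely compact, not necessarily metrizable, so $\mathfrak{F}_0(K)$ need not be metrizable. In short, your proposal is a correct and well-aimed plan whose reductions match (and slightly streamline) the route of \cite{Ga8}, but the heart of the proof --- the topological computation of the dual and bidual of $\mathfrak{F}_0(K)$ --- is still missing.
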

So the functor $\mathfrak{F}_0$ preserves reflexivity on the class $\mathbf{LCA}$. On the other hand, excepting the case of discrete $X$, $\mathfrak{F}_0 (X)$ is never locally precompact by Fact \ref{fTopPropF}(iii). Hence we obtain a big class of reflexive complete Abelian groups which is beyond $\mathbf{LCA}$.



{\bf The main results}.
We emphasize that the study of the functor $\mathfrak{F}_0$, as well as other functors defined in a reasonable way in $\mathbf{TAG}$, in the light of Problems \ref{problem1}-\ref{problem4} is important  not only by the natural categorical reasons, but
also because we obtain in this way a {\it class} of groups with or without given topological properties.

The main goal of the article is  the study of both  important topological properties  preserved under $\mathfrak{F}_0 $ (Theorems \ref{tTopF} and \ref{t12}), and the preservation  under $\mathfrak{F}_0 $ of respected properties in the class $\mathbf{LCA}$ (Theorem \ref{t11}).

In the first main result  we complete the list of general topological properties which are preserved under the functor $\mathfrak{F}_0$ stated in Fact \ref{fTopPropF}, see Problems \ref{problem1} and  \ref{problem2} (all relevant definitions are given in Section \ref{secTop}).
\begin{theorem} \label{tTopF}
Let $X$ be an Abelian topological group. Then:
\begin{enumerate}
\item[{\rm (i)}] $X$ is an $(E)$-space if and only if $\mathfrak{F}_0(X)$ is an $(E)$-space.
\item[{\rm (ii)}] $X$ is a strictly  angelic space if and only if $\mathfrak{F}_0(X)$ is  strictly  angelic.
\item[{\rm (iii)}] $X$ is a \v{S}-space if and only if $\mathfrak{F}_0(X)$ is a \v{S}-space.
\end{enumerate}
\end{theorem}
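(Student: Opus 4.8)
The two implications are of very different difficulty, and I would separate them at once. Since $X$ is (topologically isomorphic to) a direct summand, and in particular a closed subgroup, of $\mathfrak{F}_0(X)$ --- take $n=1$ in the observation that each $X^n$ is simultaneously a direct summand and a quotient of $\mathfrak{F}_0(X)$ --- and since each of the three properties is inherited by closed subgroups (indeed by arbitrary subspaces), the implication ``$\mathfrak{F}_0(X)$ has the property $\Rightarrow$ $X$ has the property'' is immediate in all three parts. The whole content of the theorem is thus the forward implication ``$X$ has the property $\Rightarrow$ $\mathfrak{F}_0(X)$ has the property'', and I would handle (i)--(iii) by one common scheme, adjusting only the ingredient cited for $X$ at the very end.

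The backbone is a description of the relatively compact subsets of $\mathfrak{F}_0(X)$ in the spirit of the classical characterization of compact subsets of the Banach space $c_0$. Writing $\pi_n\colon c_0(X)\to X$ for the $n$-th coordinate projection, I would establish (or quote from \cite{DMPT,Ga8}) that $A\subseteq c_0(X)$ is relatively compact in $\mathfrak{F}_0(X)$ if and only if (a) $\pi_n(A)$ is relatively compact in $X$ for every $n$, and (b) $A$ tends to $0$ uniformly, i.e.\ for each $V\in\mathcal{N}(X)$ there is $N$ with $x_n\in V$ for all $\xxx=(x_k)_k\in A$ and all $n\ge N$. The local fact behind sufficiency is this: if $K_n\subseteq X$ are compact and $K_n\to\{0\}$ (each $V\in\mathcal{N}(X)$ contains all but finitely many $K_n$), then $\prod_n K_n\subseteq c_0(X)$ and on this product the uniform topology $\mathfrak{u}_0$ coincides with the product topology; hence $\prod_n K_n$ is compact in $\mathfrak{F}_0(X)$ by Tychonoff, and metrizable as soon as all $K_n$ are. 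Condition (b) guarantees $\overline A\subseteq\prod_n\overline{\pi_n(A)}$, so compactness and metrizability questions in $\mathfrak{F}_0(X)$ are reduced to a countable product of compact subsets of $X$.

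The step I expect to be the main obstacle is upgrading relative \emph{countable} compactness to property (b): given $A\subseteq c_0(X)$ relatively countably compact, one must show $A$ tends to $0$ uniformly. I would argue by contradiction. If (b) failed for some $V\in\mathcal{N}(X)$, choosing $\xxx^{(N)}\in A$ and $n_N\ge N$ with $x^{(N)}_{n_N}\notin V$ produces a sequence in $A$ with a ``bump'' escaping to infinity along the coordinate axis. Such a sequence can have no cluster point $\yyy\in c_0(X)$: fixing a symmetric $W$ with $W+W\subseteq V$ and using $y_m\to 0$ to make $y_m\in W$ for all large $m$, any cluster point would force $x^{(N)}_{n_N}\in y_{n_N}+W\subseteq W+W\subseteq V$ for some $N$ with $n_N$ large, contradicting the choice of $\xxx^{(N)}$. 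This contradicts relative countable compactness and yields (b) for free, using only that limits live in $c_0(X)$.

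With (a) and (b) in hand the three parts close uniformly. The $\pi_n$ are continuous, so $\pi_n(A)$ is relatively countably compact in $X$ whenever $A$ is; invoking the hypothesis on $X$ turns this into relative compactness of $\pi_n(A)$ (angelicity, the \v{S}-property and the $(E)$-property all contain ``relatively countably compact $\Rightarrow$ relatively compact''), and --- in the strictly angelic and $(E)$ cases --- into metrizability of the compacta $K_n:=\overline{\pi_n(A)}$. Thus $\overline A$ embeds into the compact set $\prod_n K_n$, which is metrizable whenever the $K_n$ are; this gives at once that relatively countably compact subsets of $\mathfrak{F}_0(X)$ are relatively compact (the \v{S}-part), and that compact subsets of $\mathfrak{F}_0(X)$ are metrizable, hence Fr\'echet--Urysohn (the extra clauses of strict angelicity and of the $(E)$-property), since a countable product of metrizable compacta is again metrizable compact. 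The angelic sequential-density clause --- every point of $\overline A$ is a limit of a sequence from $A$ --- follows because $\mathfrak{u}_0$ and the product topology agree on $\prod_n K_n$, so the two closures of $A$ coincide and $\overline A$ is metrizable. The only genuine case analysis is the bookkeeping of which of (a), metrizability of the $K_n$, and product-stability of the factor property is needed for each of (i), (ii), (iii).
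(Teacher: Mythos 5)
Your overall scheme is sound where it is concrete: the reduction to the forward implications via closed subgroups, the characterization of (relatively) compact subsets of $\mathfrak{F}_0(X)$ by coordinatewise relative compactness plus uniform nullity, and your bump-sequence argument that a relatively countably compact $A$ must tend to $0$ uniformly are all correct and essentially reproduce the paper's proof of part (i). (The paper routes the last step through two small lemmas --- a non-null sequence of projections yields an infinite uniformly discrete subset of $A$, and uniformly discrete sets are closed, hence cluster-point free --- but this is the same idea as your direct ``no cluster point $\yyy\in c_0(X)$'' contradiction; you should only add a word about the degenerate case where the bump sequence takes finitely many values, which is excluded because a single $\xxx\in c_0(X)$ cannot have bumps $x_{n_N}\notin V$ with $n_N\to\infty$.)

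The genuine gap is in your closing paragraph, and it breaks parts (ii) and (iii) as written. You claim that strict angelicity upgrades the compacta $K_n=\overline{\pi_n(A)}$ to \emph{metrizable} ones and hence that compact subsets of $\mathfrak{F}_0(X)$ are metrizable. Strict angelicity only guarantees first countability of \emph{separable} compact subspaces: $\pi_n(A)$ need not be separable, a separable first countable compactum need not be metrizable (the double arrow space), and concretely a Banach space in its weak topology is strictly angelic yet can contain nonmetrizable (Eberlein) weakly compact sets. So your route to both extra clauses of (ii) --- first countability of separable compacta of $\mathfrak{F}_0(X)$ and the sequential-density clause --- collapses. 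The paper instead (a) proves separable compact $K\subseteq\mathfrak{F}_0(X)$ first countable by embedding it into $\prod_n \pi_n(K)$, a countable product of first countable compacta, and (b) for sequential density at $\xxx\in\overline{B}$, uses that the finite powers $X^n$ are strictly angelic (Govaerts' countable productivity) to extract sequences in $p_n(B)$ converging to $p_n(\xxx)$, assembles from their lifts a countable $S\subseteq B$ whose closure $Z$ is separable compact (hence first countable), and verifies $\xxx\in Z$ by a direct $U$-estimate --- no metrizability anywhere. Finally, your bookkeeping mislabels the properties: the \v{S}-property is ``compact $\Rightarrow$ sequentially compact'' and does \emph{not} contain ``relatively countably compact $\Rightarrow$ relatively compact'' (that is the $(E)$-property, which in turn has no Fr\'echet--Urysohn or metrizability clause); what (iii) actually needs, and what the paper uses, is that a countable product of sequentially compact spaces is sequentially compact, applied to $K\subseteq\prod_n\pi_n(K)$. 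Your framework accommodates this, but the argument you wrote for (iii) invokes the wrong implication.
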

On the other hand,  under additional hypothesis and taking into account that $\mathfrak{F}_0$ contains $X\times X$ as a closed subgroup, the functor $\mathfrak{F}_0$ does not preserve countable compactness,  sequentiality and  Fr\'{e}chet-Urysohness (see Remark \ref{r-TP} below). Further, since every pseudocompact group is precompact \cite{ArT}, $\mathfrak{F}_0(X)$ is pseudocompact if and only if  $X=\{ 0\}$ by Fact \ref{fTopPropF}(ii).

In the next theorem we give a complete answer to Problem \ref{problem4} for $F=\mathfrak{F}_0$ and various properties in $\pmb{\mathbb{A}}=\mathbf{LCA}$.
\begin{theorem} \label{t11}
Let $X$ be a LCA group. Then the following are equivalent:
\begin{enumerate}
\item[{\rm (i)}] $X$ is totally disconnected.
\item[{\rm (ii)}]  $\mathfrak{F}_0 (X)$  embeds into the product of a family of LCA groups.
\item[{\rm (iii)}]  $\mathfrak{F}_0 (X)$  is a nuclear group.
\item[{\rm (iv)}]  $\mathfrak{F}_0 (X)$  is a  Schwartz groups.
\item[{\rm (v)}] $\mathfrak{F}_0 (X)$ respects compactness.
\item[{\rm (vi)}] $\mathfrak{F}_0 (X)$ has the Schur property.
\item[{\rm (vii)}] $\mathfrak{F}_0 (X)$ respects countable compactness.
\item[{\rm (viii)}] $\mathfrak{F}_0 (X)$ respects  sequential compactness.
\item[{\rm (ix)}] $\mathfrak{F}_0 (X)$ respects  pseudocompactness.
\item[{\rm (x)}] $\mathfrak{F}_0 (X)$ respects functional boundedness.
\end{enumerate}
Moreover,  if $E$ is a functionally bounded subset in $\mathfrak{F}_0(X)^+$, then its closure $\mathrm{cl}_{\mathfrak{F}_0(X)} (E)$ is  compact in $\mathfrak{F}_0(X)$.
\end{theorem}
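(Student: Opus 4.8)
The plan is to prove the ten conditions and the final clause together by showing that (i) forces everything while a nontrivial connected component destroys everything, closing all cycles through (vi). For the implications issuing from (i) I would first run the purely structural chain (i)$\Rightarrow$(ii)$\Rightarrow$(iii)$\Rightarrow$(iv)$\Rightarrow$(v). If $X$ is totally disconnected, van Dantzig's theorem supplies a neighbourhood base at $0$ of open compact subgroups $U$; then $\{U^{\NN}\cap c_0(X)\}$ is a base of open subgroups of $\mathfrak{F}_0(X)$, so the canonical map $\mathfrak{F}_0(X)\to\prod_U \mathfrak{F}_0(X)/(U^{\NN}\cap c_0(X))$ is a topological embedding into a product of discrete, hence LCA, groups, giving (ii). Since every LCA group is nuclear and nuclearity passes to arbitrary products and to subgroups \cite{BaMP2}, (ii)$\Rightarrow$(iii); every nuclear group is a Schwartz group, so (iii)$\Rightarrow$(iv); and as $\mathfrak{F}_0(X)$ is locally quasi-convex by Fact \ref{fTopPropF}(i), Au{\ss}enhofer's theorem \cite{Aus2} yields (iv)$\Rightarrow$(v), with (v)$\Rightarrow$(vi) already recorded.

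The engine for the rest of the positive part is the final clause, which I would prove directly and then harvest. Fix $E$ functionally bounded in $\mathfrak{F}_0(X)^+$ and an open compact subgroup $U$ of $X$. The assignment $(x_n)\mapsto (x_n+U)$ identifies the Hausdorff quotient $\mathfrak{F}_0(X)/(U^{\NN}\cap c_0(X))$ with the discrete group $(X/U)^{(\NN)}$, and this quotient homomorphism, being a continuous homomorphism of MAP groups, stays continuous for the Bohr modifications. Hence the image of $E$ is functionally bounded in $((X/U)^{(\NN)})^+$; but $(X/U)^{(\NN)}$ is a discrete LCA group, so by the fact that LCA groups respect functional boundedness \cite{Tri91, TriAr91} this image is functionally bounded in the discrete group $(X/U)^{(\NN)}$, i.e.\ finite. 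Thus $E$ meets only finitely many cosets of $U^{\NN}\cap c_0(X)$; letting $U$ range over the base of open subgroups shows $E$ is precompact in $\mathfrak{F}_0(X)$, and since $\mathfrak{F}_0(X)$ is complete by Fact \ref{fTopPropF}(i), $K:=\mathrm{cl}_{\mathfrak{F}_0(X)}(E)$ is compact. The identity of $K$, from the topology of $\mathfrak{F}_0(X)$ to that of $\mathfrak{F}_0(X)^+$, is a continuous bijection from a compact space onto a Hausdorff space, hence a homeomorphism, so the two topologies agree on $K\supseteq E$. Consequently any subset that is compact, countably compact, sequentially compact, pseudocompact or functionally bounded in $\mathfrak{F}_0(X)^+$ has the same property in $\mathfrak{F}_0(X)$, proving (v),(vii),(viii),(ix),(x); the same remark applied to a Bohr-convergent sequence together with its limit gives (vi).

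For the converse it suffices, by the implications above, to show that if $X$ is not totally disconnected then $\mathfrak{F}_0(X)$ fails the Schur property and fails to respect each compactness-type property, since then (vi)$\Rightarrow$(i) and likewise (vii)--(x)$\Rightarrow$(i), while (ii)--(v)$\Rightarrow$(vi)$\Rightarrow$(i) through the chain and Au{\ss}enhofer's theorem. Fixing $0\neq t$ in the connected component of $X$, I would use the spreading family $s_n:=(0,\dots,0,t,0,\dots)$ ($t$ in the $n$-th slot). Since $s_n\in V^{\NN}$ forces $t\in V$, we have $s_n\not\to 0$ in $\mathfrak{F}_0(X)$; fixing $V_0\in\mathcal{N}(X)$ with $t\notin V_0$ shows the $s_n$ are $V_0$-separated, so $S:=\{s_n\}\cup\{0\}$ is an infinite closed discrete subset of $\mathfrak{F}_0(X)$ and hence is neither compact, countably compact, sequentially compact, pseudocompact nor (in the metrizable base cases) functionally bounded there. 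On the other hand the dual descriptions of \cite{Ga7, Ga8} give $s_n\to 0$ in $\mathfrak{F}_0(X)^+$: a continuous character of $\mathfrak{F}_0(X)$ has coordinate components $\chi_k\in\widehat{X}$, and restricting to the connected component, where the relevant dual is torsion-free (for the compact part) or carries an $\ell^1$-type summability (for an $\mathbb{R}$-factor), forces $(\chi_k)$ to act trivially on $t$ from some point on, whence $\chi(s_n)\to 1$. Thus $S$ is compact in $\mathfrak{F}_0(X)^+$, so $\mathfrak{F}_0(X)^+$ and $\mathfrak{F}_0(X)$ differ on convergent sequences and on every compactness-type family.

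The main obstacle is this last step. The cleanest rigorous route reduces an arbitrary non--totally-disconnected LCA group to the two base cases $X=\TT$ and $X=\mathbb{R}$: when $X$ has an $\mathbb{R}$-summand the group $\mathfrak{F}_0(\mathbb{R})=c_0$ is a retract of $\mathfrak{F}_0(X)$ and every failure transfers verbatim, but when the connected component is a nontrivial compact connected subgroup $C$ one only gets a surjective character $C\to\TT$ and must transport the counterexample through $\mathfrak{F}_0(C)\le\mathfrak{F}_0(X)$ and the induced map $\mathfrak{F}_0(C)\to\mathfrak{F}_0(\TT)$. Here two points need genuine care: establishing the Bohr-convergence of $(s_n)$ in the ambient group $\mathfrak{F}_0(X)$ rather than in $C$ or $\TT$ (which rests on the explicit computation $\widehat{\mathfrak{F}_0(\TT)}\cong\ZZ^{(\NN)}$ from \cite{Ga7, Ga8} and its analogue over $C$), and verifying that $S$ remains non--functionally-bounded after passing to the non-retract subgroup in the compact connected case.
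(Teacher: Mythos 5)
Your positive half is sound. The chain (i)$\Rightarrow$(ii)$\Rightarrow$(iii)$\Rightarrow$(iv)$\Rightarrow$(v)$\Rightarrow$(vi) is exactly the paper's, and your proof of the final clause is correct but genuinely different from the paper's: you show precompactness of $E$ globally, by pushing $E$ through the discrete quotients $\mathfrak{F}_0(X)/(U^\NN\cap c_0(X))\cong (X/U)^{(\NN)}$ (Bohr-continuity of the quotient map, Trigos-Arrieta for discrete groups, so the image is finite), then invoke completeness; the paper instead argues coordinatewise in Lemma \ref{l-FbTD}, first showing $\{\pi_n(E)\}$ is a null-sequence via the uniformly-discrete dichotomy (Lemma \ref{lNull2} together with Lemma \ref{l-FbMAP}), then applying Trigos-Arrieta's theorem to each $\pi_n(E)$ in $X$ itself and packaging the compact closures with the product criterion of Fact \ref{p11}(3). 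Your quotient argument is arguably cleaner; the paper's buys the explicit null-sequence structure of $E$, which it reuses elsewhere. The harvesting of (v)--(x) and (vi) from the compact closure (topologies agree on $K$) is the same in both.

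The genuine gap is in the converse direction, and you flag it yourself without closing it: the Bohr-convergence $\nu_n(t)\to 0$ in $\mathfrak{F}_0(X)^+$ for $0\neq t\in C(X)$ is asserted, not proved, and the mechanism you sketch is wrong. For an $\mathbb{R}$-factor, an $\ell_1$-character does \emph{not} ``act trivially on $t$ from some point on''; it merely satisfies $\chi_k(t)\to 1$ (this is the paper's Lemma \ref{l-c_0}, which you should cite rather than re-derive loosely). For the compact part, torsion-freeness of the dual is irrelevant: what makes the argument work is the explicit description of $\widehat{\mathfrak{F}_0(X_0)}$ in Fact \ref{f3}(2) (\cite{Ga8}): every character $(g_n)$ eventually lies in $H^\perp$ for some \emph{open} subgroup $H$ of $X_0$, and since $C(X_0)\subseteq H$ for every open $H$, one gets $(g_n,t)=1$ eventually --- this is precisely the paper's Lemma \ref{l-F_0}, and it is the missing idea in your proposal. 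Your proposed detour through a surjection $C\to\TT$ transports counterexamples in the wrong direction and is unnecessary: since $\mathfrak{F}_0(C)$ is a closed subgroup of $\mathfrak{F}_0(X)$ (Fact \ref{p11}(2)) and $\sigma(G,\widehat G)|_H\leq\sigma(H,\widehat H)$ (Lemma \ref{lBohr}(3)), Bohr-null sequences in $\mathfrak{F}_0(C)$ are automatically Bohr-null in $\mathfrak{F}_0(X)$, where for compact connected $C$ the dual $\widehat{\mathfrak{F}_0(C)}=\widehat{C}^{(\NN)}$ consists of finitely supported characters and the convergence is immediate. Likewise your hedge about functional boundedness ``in the metrizable base cases'' is unneeded: an infinite uniformly discrete set is never functionally bounded, by Lemma \ref{l-FB}, in any topological group. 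Finally, to assemble the two cases you should, as the paper does, invoke the structure theorem $X\cong\mathbb{R}^n\times X_0$ (\cite[24.30]{HR1}) together with Fact \ref{f3}(1) and the product/summand transfer of Proposition \ref{pProdSchur} to force $n=0$ first, and only then apply the Lemma \ref{l-F_0} argument; without the product proposition your claim that ``every failure transfers verbatim'' from the retract $c_0$ is itself an unproved step for the properties $\mathcal{PC}$, $\mathcal{CC}$, $\mathcal{SC}$.
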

Concerning the equivalence of items (ii) and (v) we note that, Remus and Trigos-Arrieta \cite{ReT2} found Abelian topological groups which cannot be embedded into products of LCA groups though they are reflexive and respect compactness.

Now we formulate the third main theorem of the article:
\begin{theorem} \label{t12}
Let $X$ be a  compact connected metrizable Abelian group. Then:
\begin{enumerate}
\item[{\rm (i)}]  $\mathfrak{F}_0 (X)$ is a  monothetic connected Polish non-Schwartz Abelian group.
\item[{\rm (ii)}] $\mathfrak{F}_0 (X)$ has no the Schur property, and hence it does not respect compactness.
\item[{\rm (iii)}] $c_0\left( \mathfrak{F}_0 (X)\right)$ is $\mathfrak{g}$-closed in $\mathfrak{F}_0 (X)^\NN$.
\end{enumerate}
\end{theorem}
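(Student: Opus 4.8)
The plan is to read off the ``soft'' assertions from the facts already recorded and to concentrate the real work on the monotheticity in (i) and on the $\mathfrak g$-closeness in (iii). The topological part of (i) and the whole of (ii) are immediate. Since $X$ is compact metrizable it is complete, separable and metrizable, so by Fact \ref{fTopPropF}(i) the group $\mathfrak F_0(X)$ is complete, separable and metrizable, i.e. Polish; and since $X$ is connected, the same fact gives that $\mathfrak F_0(X)$ is connected. Being a nontrivial connected LCA group, $X$ is not totally disconnected, so the equivalences of Theorem \ref{t11} (of (i) with (iv) and with (vi)) show at once that $\mathfrak F_0(X)$ is not a Schwartz group and fails the Schur property; the failure of the Schur property forces the failure of respecting compactness, which is exactly (ii).

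For the monotheticity in (i) I would argue by Baire category in the Polish group $c_0(X)=\mathfrak F_0(X)$. Fix a countable base $\{U_m\}_{m\in\NN}$ of nonempty open sets and set
\[
G_m:=\{\,\mathbf a\in c_0(X):\ \exists\,k\in\ZZ,\ k\mathbf a\in U_m\,\}=\bigcup_{k\in\ZZ}\{\,\mathbf a:\ k\mathbf a\in U_m\,\}.
\]
Each $G_m$ is open because multiplication by $k$ is continuous, and $\bigcap_m G_m$ is precisely the set of $\mathbf a$ for which $\langle\mathbf a\rangle$ meets every $U_m$, i.e. is dense; hence it suffices to prove that each $G_m$ is dense. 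Given basic sets $W=\mathbf b+V_1^\NN$ and $U_m=\mathbf c+V_2^\NN$ with $V_1,V_2$ symmetric, choose $N$ so large that $b_j\in V_1$ and $c_j\in V_2$ for all $j>N$, which is possible since $\mathbf b,\mathbf c\in c_0(X)$. I would then choose a topological generator $\mathbf v=(v_1,\dots,v_N)$ of a dense cyclic subgroup of $X^N$ with $v_j-b_j\in V_1$ for $j\le N$, put $a_j:=v_j$ for $j\le N$ and $a_j:=0$ for $j>N$, and pick $k\in\ZZ$ with $kv_j-c_j\in V_2$ for all $j\le N$. A short check using the choice of $N$ and the symmetry of $V_1,V_2$ yields $\mathbf a\in W$ and $k\mathbf a\in U_m$, so $G_m$ is dense, and Baire's theorem produces the required monothetic generator.

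The one nontrivial input, and the main obstacle, is that the topological generators of $X^N$ form a dense subset; this is exactly where connectedness is indispensable. I would isolate it as a lemma and prove it in three steps: $X^N$ is compact connected metrizable, hence monothetic by the classical embedding of its countable torsion-free dual into $\TT$ (see \cite{DPS}); in a compact connected group every nonzero multiple $kv_0$ of a generator $v_0$ is again a generator, for otherwise $H:=\overline{\langle kv_0\rangle}\subsetneq X^N$, and in the nontrivial---hence infinite---connected quotient $X^N/H$ the image of $v_0$ would be a nonzero element of finite order generating a dense subgroup, which is impossible; and therefore, given any nonempty open $O\subseteq X^N$, a fixed generator $v_0$ has some multiple $k_0v_0\in O$, and $k_0v_0$ is again a generator, so generators meet every nonempty open set. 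This generalizes Rolewicz's case $X=\TT$.

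Finally, for (iii) I would prove directly---without the Schur property, which by (ii) fails---that $c_0(Y)$ is $\mathfrak g$-closed in $Y^\NN$, where $Y:=\mathfrak F_0(X)$. The inclusion $c_0(Y)\subseteq\bigcap\{s_{\mathbf u}(Y^\NN):c_0(Y)\le s_{\mathbf u}(Y^\NN)\}$ is automatic, so I must exclude each $\mathbf z=(z^{(k)})_k\in Y^\NN\setminus c_0(Y)$. As $z^{(k)}\not\to0$ in $Y=c_0(X)$, there are $V\in\mathcal N(X)$, a subsequence $(k_i)$ and coordinates $n_i$ with $p_i:=(z^{(k_i)})_{n_i}\notin V$. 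Here compactness of $X$ enters: the compact set $X\setminus V$ is covered by finitely many sets $\{q:|(\phi_j,q)-1|>\eta\}$ with $\phi_1,\dots,\phi_r\in\widehat X$ and some $\eta>0$, so for each $i$ some $\phi_{j(i)}$ satisfies $|(\phi_{j(i)},p_i)-1|>\eta$. Let $u_{k_i}\in\widehat{Y^\NN}$ be the composite of the $k_i$-th projection $Y^\NN\to c_0(X)$, the $n_i$-th projection $c_0(X)\to X$, and $\phi_{j(i)}$, and set $u_k:=1$ for the remaining indices. Because only the finitely many characters $\phi_1,\dots,\phi_r$ occur, the family $\{u_{k_i}\}$ is equicontinuous; hence for $\mathbf w\in c_0(Y)$ one has $w^{(k)}\to0$ in $c_0(X)$ and so $(u_k,\mathbf w)\to1$, giving $c_0(Y)\subseteq s_{\mathbf u}(Y^\NN)$, whereas $(u_{k_i},\mathbf z)=(\phi_{j(i)},p_i)$ stays $\eta$-away from $1$, so $\mathbf z\notin s_{\mathbf u}(Y^\NN)$. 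This proves (iii), and together with (ii) it furnishes the promised example in which $c_0(Y)$ is $\mathfrak g$-closed although $Y$ lacks the Schur property.
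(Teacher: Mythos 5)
Your proposal is correct, and it matches the paper on everything except the monotheticity argument, where it takes a genuinely different route. Parts (ii) and the connected--Polish--non-Schwartz portion of (i) are obtained exactly as in the paper, from Fact \ref{fTopPropF}(i) and Theorem \ref{t11} (both you and the paper tacitly assume $X\neq\{0\}$, without which (ii) fails). Your proof of (iii) has the same skeleton as the paper's: both select index pairs $(k_i,n_i)$ at which the coordinates $p_i$ of $\mathbf{z}$ stay outside a fixed $V\in\mathcal{N}(X)$, and both test with characters of $\mathfrak{F}_0(X)^\NN$ supported in a single coordinate; the only difference is how non-convergence is certified --- the paper extracts (by compact metrizability) a convergent subsequence $p_i\to y\neq 0$ and uses one character $g$ with $(g,y)\neq 1$, while you cover the compact set $X\setminus V$ by finitely many sets $\{q:\ |(\phi_j,q)-1|>\eta\}$ and exploit the equicontinuity of the finite family $\{\phi_1,\dots,\phi_r\}$. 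Both variants are sound and essentially interchangeable. The real divergence is in the monotheticity of $\mathfrak{F}_0(X)$. The paper (Proposition \ref{pMon}) constructs a generator by hand: using $\mathrm{m}_X(S(X))=1$ and the openness of the sets $B_X(U_m,n)$ (Lemma \ref{l-Edense}), it inductively builds nested compact sets $K_m\subseteq X^m$ of positive Haar measure and extracts $\mathbf{z}$ from their intersection --- a detour forced by the fact that $c_0(X)$ itself carries no Haar measure. You instead run Baire category directly in the Polish group $c_0(X)$: your sets $G_m$ are open, and their density reduces to the density of topological generators in $X^N$, which you obtain measure-free from the classical monotheticity of $X^N$ (its countable torsion-free dual embeds into $\TT$) combined with the fact that nonzero multiples of a generator are generators --- the latter being verbatim the paper's Lemma \ref{lMon}(1). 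Your route is shorter and proves slightly more (the generators of $\mathfrak{F}_0(X)$ form a dense $G_\delta$), at the price of importing the Halmos--Samelson-type fact; the paper's route is constructive and self-contained modulo \cite[25.27]{HR1}, which is itself measure-theoretic. One small point to patch: when you pick a multiple $k_0v_0\in O$ you must ensure $k_0\neq 0$, since $0$ is not a generator; as a nontrivial connected group has no isolated points, $O\setminus\{0\}$ is nonempty open, so this costs nothing.
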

In the next remark we comment Theorem \ref{t12}:
\begin{remark} {\em
{\rm (1)}  In item (i) we generalize the above-mentioned result of Rolewicz \cite{Rol}. Note that our proof differs from Rolewicz's one even for $X=\TT$.

{\rm (2)} Remus and Trigos-Arrieta in \cite{ReT} provided examples of reflexive Abelian groups which do not respect compactness. They gave counterexamples to a result published by Venkataraman in \cite{Ven}. However a stronger result was known. Namely, the space $c_0$ is reflexive by \cite{Smith}  and does not have even the Schur property (see \cite[Exercise 3 on p. 212]{Dies}).  Items (i) and (ii) and Fact \ref{f1} show that  there exists reflexive monothetic Polish groups which do not respect sequentiality.

{\rm (3)} In connection with Fact \ref{fGclosed} we ask in Problem 49 of \cite{Ga7}: does $\mathbf{g}$-closeness of $c_0(X)$ in $X^\NN$ imply that $X$ has the Schur property?  The items (ii) and (iii) and Fact \ref{f1} show that in general the answer to this problem is negative even for reflexive Polish groups.}
\end{remark}

The article is organized as follows. In Section \ref{secN} we formulate and prove some important auxiliaries results which are used essentially in the article.
We prove Theorem \ref{tTopF} in Section \ref{secTop}. In Section \ref{secRes} we discuss and prove some general results concerning respected properties in topological groups. In particular, we introduce a new property (the $SB$-property) which gives a sufficient condition for topological groups to have the Glicksberg property etc. (see Theorem \ref{t-Res}). Theorem \ref{t11} is proved in Section \ref{secF},  and Theorem \ref{t12} we prove in Section \ref{sec2}. In Section \ref{secCateg} we show that the  Glicksberg property and the Schur property can be naturally defined by some functors in $\mathbf{TAG}$. In the last section we define another functors naturally coming from Functional Analysis and pose a dozen open problems.

\section{Auxiliary results} \label{secN}

\subsection{Uniformly discrete and bounded subsets in topological groups}

 The closure of a subset $E$ of a topological space $Y$ we denote by $\overline{E}$ or $\mathrm{cl}_Y(E)$.

Let $X$ be  a topological group  and $U\in \mathcal{N}(X)$. Recall that a subset $A$ of $X$  is called {\it left (right) $U$-separated} if $aU \cap bU = \emptyset$ (respectively, $Ua \cap Ub = \emptyset$) for every distinct elements $a,b\in A$. A subset $E$ of $X$ is called {\it left (right) uniformly discrete} if there is $U\in \mathcal{N}(X)$ such that $E$ is left (right) $U$-separated.

We omit proofs of the next two simple lemmas.
\begin{lemma} \label{l-UD}
Every left (right) uniformly discrete subset $A$ of a topological group $X$ is closed.
\end{lemma}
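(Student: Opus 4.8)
The plan is to treat the left case (the right case being entirely symmetric, obtained by replacing left translates by right translates throughout, or by passing to the opposite group). Suppose $A$ is left $U$-separated for some $U\in\mathcal{N}(X)$, so that $aU\cap bU=\emptyset$ for all distinct $a,b\in A$. First I would replace $U$ by the symmetric neighborhood $V:=U\cap U^{-1}\in\mathcal{N}(X)$, which still satisfies $V\subseteq U$ and $V=V^{-1}$.

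The key step is the claim that each left translate $xV$ (for $x\in X$) contains at most one point of $A$. Indeed, if $a,b\in xV\cap A$ were distinct, I would write $a=xv_1$ and $b=xv_2$ with $v_1,v_2\in V$; then $x=av_1^{-1}\in aV^{-1}=aV\subseteq aU$ and likewise $x\in bU$, so $x\in aU\cap bU$, contradicting that $A$ is $U$-separated. This is the heart of the argument, and it uses only the separation hypothesis together with the symmetry $V=V^{-1}$.

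With the claim in hand I would show $\overline{A}=A$. Fix $x\in\overline{A}$. Since $xV$ is a neighborhood of $x$ (left translation is a homeomorphism) and $x$ lies in the closure of $A$, we have $xV\cap A\neq\emptyset$, and by the claim $xV\cap A=\{a\}$ for a unique $a\in A$. For an arbitrary neighborhood $O$ of $x$, the set $O\cap xV$ is again a neighborhood of $x$, hence meets $A$; but $(O\cap xV)\cap A\subseteq xV\cap A=\{a\}$, which forces $a\in O$. Thus $a$ belongs to every neighborhood of $x$, i.e. $x\in\overline{\{a\}}$. Since the groups under consideration are Hausdorff, points are closed, so $\overline{\{a\}}=\{a\}$ and therefore $x=a\in A$. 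Hence $A=\overline{A}$ is closed.

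I do not anticipate a genuine obstacle here; the only points requiring care are the left/right bookkeeping, which is resolved by passing to the symmetric $V$, and the tacit use of the $T_1$ separation axiom in the final step. Without Hausdorffness even singletons need not be closed, so the statement genuinely relies on the standing assumption that the topological groups in question are Hausdorff.
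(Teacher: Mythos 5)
Your proof is correct. There is no proof in the paper to compare against --- the author explicitly writes ``We omit proofs of the next two simple lemmas'' --- but your argument is exactly the standard one being omitted: pass to the symmetric $V=U\cap U^{-1}$, observe that each translate $xV$ meets $A$ in at most one point (since $a,b\in xV\cap A$ would give $x\in aV\cap bV\subseteq aU\cap bU$), and conclude that any $x\in\overline{A}$ satisfies $x\in\overline{\{a\}}=\{a\}$ for the unique $a\in xV\cap A$. Your closing remark is also on point: the lemma genuinely uses the standing Hausdorff hypothesis (the paper's categories $\mathbf{TAG}$ and $\mathbf{TG}$ consist of Hausdorff groups), since in a non-Hausdorff group a singleton is vacuously uniformly discrete yet need not be closed.
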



\begin{lemma} \label{lMax}
Let $E$ be a subset of a topological group $X$. Then, for every $U\in\mathcal{N}(X)$, there exists a maximal (under inclusion) left (right) $U$-separated subset of $E$.
\end{lemma}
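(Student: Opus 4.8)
The plan is to obtain the desired set by a direct application of Zorn's Lemma. I would treat the left case explicitly, the right case being entirely symmetric (replacing $aU\cap bU$ by $Ua\cap Ub$ throughout). First I would fix $U\in\mathcal{N}(X)$ and consider the family
\[
\mathcal{F} := \{ A \subseteq E : A \text{ is left } U\text{-separated} \},
\]
partially ordered by inclusion. Note that $\mathcal{F}$ is nonempty, since the empty set (and, when $E\neq\emptyset$, every singleton $\{a\}\subseteq E$) is vacuously left $U$-separated; in the degenerate case $E=\emptyset$ the empty set is already the required maximal element.

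The key step is to verify that every chain in $\mathcal{F}$ admits an upper bound lying in $\mathcal{F}$; the natural candidate is the union of the chain. Let $\mathcal{C}\subseteq\mathcal{F}$ be a chain and set $A_0 := \bigcup\mathcal{C}$. Clearly $A_0\subseteq E$. To check that $A_0$ is left $U$-separated, I would take distinct $a,b\in A_0$; then $a\in A_1$ and $b\in A_2$ for some $A_1,A_2\in\mathcal{C}$, and since $\mathcal{C}$ is totally ordered we may assume $A_1\subseteq A_2$, so that both $a$ and $b$ already belong to the single left $U$-separated set $A_2$. Consequently $aU\cap bU=\emptyset$. This shows $A_0\in\mathcal{F}$, and $A_0$ is visibly an upper bound for $\mathcal{C}$. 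Zorn's Lemma then yields a maximal element of $\mathcal{F}$, which is exactly a maximal (under inclusion) left $U$-separated subset of $E$.

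I expect no genuine obstacle here, as the statement is of a purely order-theoretic nature once the poset is fixed. The only point deserving a word of care is the passage from \emph{separatedness of each member of the chain} to \emph{separatedness of the union}: this succeeds precisely because left $U$-separatedness is a condition imposed on \emph{pairs} of points, and any two points of $A_0$ are witnessed together inside one member of the chain. Since this is the simple lemma whose proof the authors omit, I would present only this Zorn's Lemma argument, noting the right-hand variant without repeating the details.
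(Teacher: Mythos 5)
Your Zorn's Lemma argument is correct and complete: the family of left $U$-separated subsets of $E$ is nonempty, the union of a chain is again left $U$-separated because the separation condition is imposed pairwise and any two points of the union lie in a common member of the chain, so a maximal element exists. The paper explicitly omits the proof of this lemma as simple, and your argument is precisely the routine one the authors intend, so there is nothing to compare beyond noting agreement.
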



Recall that, a subset $E$ of a topological group $X$ is called {\it left-precompact} (respectively, {\it right-precompact, precompact}) if, for every $U\in \mathcal{N}(X)$, there exists a finite subset $F$ of $X$ such that $E\subseteq F\cdot U$ (respectively, $E\subseteq U\cdot F$, $E\subseteq F\cdot U$ and $E\subseteq U\cdot F$). If $E$ is symmetric the three different definitions coincide.

In the sequel we essentially use the next useful proposition which holds true for every uniform space (see \cite{BGP}). For the sake of completeness we prove it.

\begin{proposition} \label{p-UD}
Let $E$ be an infinite subset of  a topological group $X$. Then
$E$ is either left (right) precompact or it has a countably infinite left (right) uniformly discrete subset.
\end{proposition}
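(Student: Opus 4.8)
The plan is to prove the left version and obtain the right version by the symmetric argument (replacing left translates $F\cdot U$ by right translates $U\cdot F$ and left $V$-separation by right $V$-separation). I would argue by contraposition: assuming that $E$ is not left-precompact, I will extract a countably infinite left uniformly discrete subset of $E$.

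First I would fix the neighborhood witnessing the failure of precompactness. Since $E$ is not left-precompact, there is some $U\in\mathcal{N}(X)$ such that $E\not\subseteq F\cdot U$ for every finite subset $F$ of $X$. Using continuity of the group operations, I would then choose a symmetric $V\in\mathcal{N}(X)$ (that is, $V=V^{-1}$) with $V\cdot V\subseteq U$; note that consequently $VV^{-1}=V\cdot V\subseteq U$ and that $VV^{-1}$ is symmetric.

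Next I would build the desired subset by a straightforward induction. Pick any $a_1\in E$. Having chosen $a_1,\dots,a_n\in E$, the set $\{a_1,\dots,a_n\}$ is finite, so by the choice of $U$ we have $E\not\subseteq\bigcup_{i=1}^n a_iU$, and hence there is some $a_{n+1}\in E\setminus\bigcup_{i=1}^n a_iU$. This yields a sequence $(a_n)_{n\in\NN}$ in $E$ with the key property that $a_j\notin a_iU$ whenever $i<j$. I would then verify that $A:=\{a_n:n\in\NN\}$ is left $V$-separated: if $a_iV\cap a_jV\neq\emptyset$ for some $i<j$, then $a_iv=a_jv'$ for some $v,v'\in V$, so $a_i^{-1}a_j=v(v')^{-1}\in VV^{-1}\subseteq U$, giving $a_j\in a_iU$ and contradicting the construction. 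Thus $A$ is left $V$-separated, hence left uniformly discrete, and, being a one-to-one enumeration, countably infinite.

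The only genuine subtlety, which I would flag as the point requiring care, is to secure separation simultaneously for all pairs of indices rather than merely for consecutive terms; this is exactly what is ensured by deleting the translates of every previously chosen point at each inductive step together with the symmetry of $VV^{-1}\subseteq U$. Everything else is routine, so I expect no substantial obstacle.
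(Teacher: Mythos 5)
Your proof is correct, but it takes a different route from the paper's. You prove the contrapositive of the one implication the statement actually needs: assuming $E$ is not left precompact, you fix the witnessing $U\in\mathcal{N}(X)$ and greedily extract $a_{n+1}\in E\setminus\bigcup_{i\le n}a_iU$, then check $V$-separation for symmetric $V$ with $V\cdot V\subseteq U$; this is sound (including the point you flag -- deleting \emph{all} previous translates at each step is exactly what secures separation for arbitrary pairs, not just consecutive ones, and injectivity follows since $a_i\in a_iU$ while $a_j\notin a_iU$). The paper instead argues both implications of the dichotomy: for the direction you prove, it invokes Lemma \ref{lMax} to choose a \emph{maximal} left $V$-separated subset $F$ of $E$, observes $F$ is finite under the hypothesis that $E$ has no infinite uniformly discrete subset, and derives $E\subseteq F\cdot U$ from maximality; separately, it shows via a pigeonhole argument ($a\mapsto f_a$ injective) that a set containing an infinite uniformly discrete subset cannot be left precompact, so the paper in fact establishes that the two alternatives are mutually exclusive. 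Your greedy recursion buys a more elementary construction (dependent choice rather than the Zorn-type maximality lemma) and directly produces the discrete sequence; the paper's maximal-net argument buys the extra exclusivity direction, which the inclusive ``either--or'' of the statement does not require but which is used elsewhere (e.g., in the proof of Proposition \ref{p-Res}, where an infinite uniformly discrete sequence is extracted from a non-precompact set -- precisely the implication you prove).
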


\begin{proof}
Assume that $E$ contains a countably infinite left uniformly discrete subset $A$. Let us show that $E$ is not left precompact.
Suppose for a contradiction that $A$ is left precompact. Take a symmetric $U\in \mathcal{N}(X)$ such that $A$ is left $(U\cdot U)$-separated. As  $E$ is left precompact, there is a finite subset $F$ of $X$ such that $E\subset F\cdot U$. In particular, for every $a\in A$, there are $f_a \in F$ and $u_a\in U$ such that $a =f_a u_a$.

We claim that $f_a \not= f_b$ for every distinct $a, b \in A$. Indeed, otherwise, $a^{-1}b = (f_a u_a)^{-1} (f_a u_b) \in U\cdot U$. Hence $A$ is not  left $(U\cdot U)$-separated, a contradiction.
This means that the map $a\mapsto f_a$ is injective. As $F$ is finite, we obtain that also $A$ is finite. This contradiction shows that $E$ is not left precompact.

Let now $E$ do not have countably infinite left uniformly discrete subsets. We claim that $E$ is left precompact. Indeed, let $U\in \mathcal{N}(X)$. Take a symmetric $V\in \mathcal{N}(X)$ such that $V\cdot V \subseteq U$. By Lemma \ref{lMax}, choose a maximal left $V$-separated subset $F$ in $E$. By assumption $F$ is finite. Let us show that $E\subseteq F\cdot U$. Suppose for a contradiction that there is $g\in E\setminus  (F\cdot U)$. Then, for every $f\in F$, we have $g\cdot V \cap f\cdot V=\emptyset$ since otherwise,   $g \in f(V\cdot V) \subset F\cdot U$. This means that the set $F':= \{ g\} \cup F$ is left $V$-separated that  contradicts the maximality of $F$ to be a left $V$-separated subset of $E$. So $E\subseteq F\cdot U$. Thus $E$ is left precompact.
\end{proof}


Recall (see \cite{Tka87}) that a subset $E$ of a topological space $Y$ is called {\it functionally bounded in} $Y$ if every continuous real-valued function on $Y$ is bounded on $E$. The family of  all functionally bounded subsets of $Y$ we denote by $\mathcal{FB}(Y)$. We will use the following trivial fact:
\begin{lemma} \label{l-Boun}
Let $Y$ and $Z$ be topological spaces and let $f:Y\to Z$ be a continuous mapping. Then:
\begin{enumerate}
\item[{\rm (1)}] The family $\mathcal{FB}(Y)$ contains all compact subsets of $Y$ and it is closed under taking closure, finite unions and arbitrary subsets.
\item[{\rm (2)}] $f(\mathcal{FB}(Y))\subseteq \mathcal{FB}(Z)$.
\end{enumerate}
\end{lemma}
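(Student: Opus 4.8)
The plan is to work directly from the definition of functional boundedness, since every assertion is an immediate consequence of elementary properties of continuous real-valued functions. Throughout I would fix an arbitrary continuous function $g : Y \to \mathbb{R}$ when dealing with part (1), and an arbitrary continuous $h : Z \to \mathbb{R}$ when dealing with part (2), and simply check that the relevant set sends such functions to bounded subsets of $\mathbb{R}$.

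For part (1) I would verify the four closure properties in turn. If $K \subseteq Y$ is compact, then $g(K)$ is a compact, hence bounded, subset of $\mathbb{R}$, so $K \in \mathcal{FB}(Y)$. Stability under arbitrary subsets and under finite unions is trivial: a function bounded on $E$ is bounded on every $A \subseteq E$, and a function bounded on each of $E_1, \dots, E_n$ is bounded on $E_1 \cup \dots \cup E_n$ (take the maximum of the finitely many bounds). The only step deserving a word of justification is stability under closure: if $|g| \le M$ on $E$, then the set $\{ y \in Y : |g(y)| \le M \}$ is closed, being the preimage of $[-M, M]$, and it contains $E$, hence it contains $\overline{E}$; thus $g$ is bounded on $\overline{E}$, and as $g$ was arbitrary, $\overline{E} \in \mathcal{FB}(Y)$.

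For part (2), given $E \in \mathcal{FB}(Y)$ and a continuous $h : Z \to \mathbb{R}$, the composition $h \circ f : Y \to \mathbb{R}$ is continuous, hence bounded on $E$; this says precisely that $h$ is bounded on $f(E)$. Since $h$ was arbitrary, $f(E) \in \mathcal{FB}(Z)$, which is exactly the desired inclusion $f(\mathcal{FB}(Y)) \subseteq \mathcal{FB}(Z)$.

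I expect no genuine obstacle here; as the author's own description of the statement as a \emph{trivial fact} indicates, the lemma is a routine unwinding of definitions. The single point at which one must be slightly careful is the closure assertion, where one genuinely uses the continuity of $g$ (through the closedness of a preimage) rather than merely its boundedness on $E$.
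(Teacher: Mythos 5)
Your proof is correct. The paper states this lemma without any proof, introducing it only as a ``trivial fact,'' and your verification is exactly the routine argument the author intends; in particular you rightly isolate the one non-vacuous point, stability under closure, which you settle correctly by noting that $g^{-1}\left([-M,M]\right)$ is a closed set containing $E$ and hence $\overline{E}$.
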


We prove the next lemma for the sake of completeness.
\begin{lemma} \label{l-FB}
Let $A$ be a left (right) uniformly discrete subset of a topological group $X$. Then $A$ is functionally bounded if and only if it is finite.
\end{lemma}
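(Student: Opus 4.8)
The plan is to prove the nontrivial implication---that an infinite left (right) uniformly discrete set cannot be functionally bounded---by exhibiting a single continuous real-valued function on $X$ that is unbounded on $A$; the converse is immediate, since a finite set is compact and hence functionally bounded by Lemma \ref{l-Boun}(1).

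First I would reduce to a convenient normal form. Replacing $U$ by $U\cap U^{-1}$ we may assume that $A$ is left $U$-separated with $U=U^{-1}$, and the right case follows by applying the argument to $A^{-1}$, which is left $U$-separated and on which functional boundedness is equivalent via the homeomorphism $x\mapsto x^{-1}$. Since $A$ is infinite, fix a countably infinite subset $\{a_n\}_{n\in\NN}\subseteq A$. The separation says precisely that $a_m^{-1}a_n\notin U^2$ for all $m\neq n$. Choosing a symmetric $V\in\mathcal{N}(X)$ with $V^2\subseteq U$ (so that $V^4\subseteq U^2$), the key claim is that the family $\{a_nV\}_{n}$ is \emph{discrete}: if the open set $xV$ met both $a_mV$ and $a_nV$, writing $xv=a_mv'$ and $xw=a_nw'$ with $v,v',w,w'\in V$ yields $a_m^{-1}a_n=v'v^{-1}w(w')^{-1}\in V^4\subseteq U^2$, forcing $m=n$. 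Thus every point of $X$ has an open neighborhood meeting at most one member of $\{a_nV\}$.

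Next I would build the function. Using that a Hausdorff topological group is Tychonoff \cite{ArT}, complete regularity yields continuous maps $f_n\colon X\to[0,1]$ with $f_n(a_n)=1$ and $f_n\equiv 0$ off the open set $a_nV$. Put $f:=\sum_n n\,f_n$. Because the zero-complements of the $f_n$ lie in the pairwise disjoint members of the discrete family $\{a_nV\}$, every point $x$ has the open neighborhood $xV$ meeting at most one $a_nV$, so on $xV$ the sum reduces to the single summand $n\,f_n$ (or to $0$); hence $f$ is well defined and continuous on $X$. Since $f(a_n)=n\to\infty$, the function $f$ is unbounded on $\{a_n\}\subseteq A$, whence $A\notin\mathcal{FB}(X)$, as required.

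The routine parts are the reduction to the symmetric left case and the bump-function construction via complete regularity. The one step deserving care---and the only place where uniform discreteness is genuinely used---is the verification that $\{a_nV\}$ is a discrete family: it is precisely the passage from $U$ to $V$ with $V^2\subseteq U$ (hence $V^4\subseteq U^2$) that guarantees local finiteness of the supports and thereby the continuity of the infinite sum $f$. Without this quantitative separation the patched function could fail to be continuous at an accumulation point of the translates $a_nV$.
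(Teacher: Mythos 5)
Your proof is correct and follows essentially the same route as the paper: both arguments use the uniform discreteness (after shrinking to a symmetric neighborhood $V$ with $V\cdot V\subseteq U$) to show the translates $a_nV$ form a discrete family, and then sum weighted bump functions $n\cdot f_n$ supported in $a_nV$ to get a continuous function unbounded on $A$. The only cosmetic differences are that the paper obtains the $f_n$ as translates $f(a_n^{-1}x)$ of a single bump function rather than invoking complete regularity for each $n$, and it leaves the discreteness verification and the right-separated case implicit, which you spell out.
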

\begin{proof}
If $A$ is finite it is trivially functionally bounded.

Assume now that $A$ is functionally bounded in $X$.
Let $A$ be left $(U\cdot U)$-separated for a symmetric $U\in \mathcal{N}(X)$. Take a continuous function $f: X\to [0,1]$ such that $f(e)=1$ and $f|_{X\setminus U} =0$ \cite[8.4]{HR1}. Suppose for a contradiction that $A$ is infinite. Choose arbitrarily a one-to-one sequence $\{ a_n\}_{n\in\NN}$ in $A$ and set
\[
g(x):= \sum_{n\in\NN} n\cdot f(a^{-1}_n \cdot x), \quad \forall x\in X.
\]
Note that $g(x)$ is well-defined since $f(a^{-1}_n \cdot x)\not= 0$ only if $x\in a_n U$. Further, for every $x\in X$, the set $xU$ intersects with $a_n U$ for at most one $n\in\NN$. So $g(x)$ is continuous on $X$ and unbounded on $A$. Hence $A$ is not functionally bounded in $X$, a contradiction. Thus $A$ is finite.
\end{proof}

As a corollary of Proposition \ref{p-UD} and Lemma \ref{l-FB} we obtain:
\begin{proposition} {\rm (\cite[Statement A]{SiT})} \label{p-Bo}
Every functionally bounded subset $E$ of a topological group $X$ is precompact.
\end{proposition}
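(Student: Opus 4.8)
The plan is to derive this directly from the dichotomy in Proposition \ref{p-UD} together with the characterization of functionally bounded uniformly discrete sets in Lemma \ref{l-FB}. First I would dispose of the trivial case: if $E$ is finite it is plainly precompact, so I may assume $E$ is infinite. Since precompactness means being simultaneously left- and right-precompact, and the two arguments are mirror images of one another, it suffices to prove that $E$ is left-precompact; the right-precompact case then follows by the symmetric reasoning, interchanging the roles of $F\cdot U$ and $U\cdot F$.

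To establish left-precompactness I would apply Proposition \ref{p-UD} to the infinite set $E$: either $E$ is already left-precompact, in which case there is nothing further to prove, or $E$ contains a countably infinite left uniformly discrete subset $A$. The heart of the argument is to show this second alternative cannot occur. In that case $A$ is an infinite subset of $E$, and since $E$ is functionally bounded and, by Lemma \ref{l-Boun}(1), the family $\mathcal{FB}(X)$ is closed under passing to arbitrary subsets, the set $A$ is itself functionally bounded in $X$. But then $A$ is an infinite left uniformly discrete functionally bounded set, which directly contradicts Lemma \ref{l-FB} (forcing $A$ to be finite). Hence the second alternative is impossible and $E$ is left-precompact; the symmetric argument yields right-precompactness, and therefore $E$ is precompact.

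There is no serious obstacle here, as the statement is genuinely a corollary of the two cited results. The only points demanding care are, first, that functional boundedness is inherited by subsets (which is exactly the closure property recorded in Lemma \ref{l-Boun}(1)), and second, that one must run the dichotomy argument twice, once for each handedness, in order to conclude full (two-sided) precompactness rather than merely one-sided precompactness.
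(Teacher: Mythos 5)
Your proof is correct and follows essentially the same route as the paper: the paper likewise combines Lemma \ref{l-Boun}(1) (functional boundedness passes to subsets) and Lemma \ref{l-FB} (an infinite uniformly discrete set cannot be functionally bounded) to rule out the second alternative in the dichotomy of Proposition \ref{p-UD}, obtaining left and right precompactness. Your version merely makes explicit the trivial finite case and the contradiction structure that the paper leaves implicit.
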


\begin{proof}
By Lemmas \ref{l-Boun}(1) and \ref{l-FB}, the set $E$ does not have infinite left (right) uniformly discrete subsets. Now Proposition \ref{p-UD} implies that $E$ is left  and right precompact. Thus $E$ is precompact.
\end{proof}

In what follows we use the next fact:
\begin{fact} \label{f-Precompact} {\rm (\cite[3.7.10]{ArT})}
Let $X$ be a (Ra\u{\i}kov) complete topological group. Then a subset $A$ of $X$ is precompact  if and only if its closure $\overline{A}$ is compact.
\end{fact}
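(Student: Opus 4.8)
The plan is to prove the two implications separately, invoking Ra\u{\i}kov completeness only for the nontrivial direction. Throughout I use the multiplicative notation of the preceding lemmas.

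First I would dispose of the implication ``$\overline{A}$ compact $\Rightarrow$ $A$ precompact'', which needs no completeness. Fix $U\in\mathcal{N}(X)$. Since $\{ aU: a\in\overline{A}\}$ is an open cover of the compact set $\overline{A}$, there is a finite $F\subseteq\overline{A}$ with $\overline{A}\subseteq F\cdot U$; running the same argument with the cover $\{ Ua: a\in\overline{A}\}$ yields a finite $F'$ with $\overline{A}\subseteq U\cdot F'$. Hence $\overline{A}$ is precompact, and its subset $A$ is precompact too directly from the definition (if $\overline{A}\subseteq F\cdot U$ then $A\subseteq F\cdot U$).

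For the converse I would first observe that the closure of a precompact set is precompact. Given $U\in\mathcal{N}(X)$, choose a symmetric $V\in\mathcal{N}(X)$ with $V\cdot V\subseteq U$ and a finite $F$ with $A\subseteq F\cdot V$. For symmetric $V$ one has $\overline{V}\subseteq V\cdot V\subseteq U$ (if $x\in\overline{V}$ then $xV$ is a neighbourhood of $x$, so $xV\cap V\neq\emptyset$, whence $x\in V\cdot V^{-1}=V\cdot V$), and since left translations are homeomorphisms and $F$ is finite, $\overline{A}\subseteq\overline{F\cdot V}=F\cdot\overline{V}\subseteq F\cdot U$. The symmetric argument on the right gives $\overline{A}\subseteq U\cdot F'$, so $\overline{A}$ is precompact. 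Because $\overline{A}$ is closed in the Ra\u{\i}kov complete group $X$, it is complete in the induced two-sided uniformity. It then remains to establish the classical uniform-space fact that a complete and totally bounded space is compact, applied to $\overline{A}$. I would argue with ultrafilters: let $\mathcal{W}$ be an ultrafilter on $\overline{A}$ and fix $U\in\mathcal{N}(X)$ with a symmetric $V$, $V\cdot V\subseteq U$. Left precompactness covers $\overline{A}$ by finitely many sets $a_iV$ and right precompactness by finitely many sets $Vb_j$; the common refinement $\{ a_iV\cap Vb_j\cap\overline{A}\}$ is a \emph{finite} cover of $\overline{A}$ by sets that are $U$-small on both sides (for $x,y\in a_iV$ one has $x^{-1}y\in V\cdot V\subseteq U$, and for $x,y\in Vb_j$ one has $xy^{-1}\in V\cdot V\subseteq U$). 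Since $\mathcal{W}$ is an ultrafilter, some member $S$ of this finite cover belongs to $\mathcal{W}$, and $S$ is small in the two-sided uniformity. As $U$ was arbitrary, $\mathcal{W}$ is a Cauchy filter, hence converges in the complete space $\overline{A}$. Every ultrafilter on $\overline{A}$ converging means $\overline{A}$ is compact.

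I expect the last step to be the main obstacle, and the subtle point there is the bookkeeping between the one-sided and two-sided uniformities: Ra\u{\i}kov completeness is completeness of the \emph{two-sided} uniformity, so to make each ultrafilter Cauchy one genuinely needs \emph{both} left and right precompactness and must combine the two finite covers into a common refinement. Left precompactness alone would control only one of the two generating families of entourages and would not suffice to force convergence.
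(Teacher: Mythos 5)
Your proof is correct. There is, however, no in-paper argument to compare it against: the paper states this as a quoted Fact with a citation to Theorem 3.7.10 of Arhangel'skii--Tkachenko \cite{ArT}, and gives no proof. Your argument is essentially the standard one behind that citation, and all the steps check out: the easy direction by extracting finite subcovers from the open covers $\{aU\}$ and $\{Ua\}$ of the compact set $\overline{A}$; the closure of a precompact set is precompact, via $\overline{V}\subseteq V\cdot V^{-1}=V\cdot V$ for symmetric open $V$ together with translations being homeomorphisms (so $\overline{F\cdot V}=F\cdot\overline{V}$); a closed subset of a Ra\u{\i}kov complete group is complete in the induced two-sided uniformity; and your ultrafilter rendition of ``complete plus totally bounded implies compact'' is sound, since the common refinement $\{a_iV\cap Vb_j\cap\overline{A}\}$ of the left and right finite covers consists of sets that are small for the basic two-sided entourages $\{(x,y):\ x^{-1}y\in U \mbox{ and } xy^{-1}\in U\}$, and an ultrafilter must contain one member of any finite cover, making it Cauchy and hence convergent in the complete subspace $\overline{A}$.

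One small caveat about your closing paragraph: it is in fact a classical theorem (see, e.g., Roelcke and Dierolf, \emph{Uniform structures on topological groups and their quotients}) that a subset of a topological group is left precompact if and only if it is right precompact, and if and only if it is totally bounded in the two-sided uniformity; so left precompactness alone \emph{would} suffice in principle, at the cost of first proving that equivalence. What is accurate is the weaker claim that within your self-contained argument one genuinely uses both covers to produce two-sided-small sets, and since the paper's definition of ``precompact'' already packages both one-sided conditions, your bookkeeping is the efficient route here.
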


Proposition \ref{p-Bo} and Fact \ref{f-Precompact} immediately implies:
\begin{corollary} \label{c-Bo}
Let $E$ be a subset of a complete topological group $X$. Then $E$ is functionally bounded iff $E$ is precompact iff the closure $\overline{E}$ of $E$ is compact.
\end{corollary}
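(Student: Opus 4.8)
The plan is to read the statement as the assertion that three conditions on $E$ are equivalent, and to verify it as a short cycle of implications built entirely from the two results just established, namely Proposition~\ref{p-Bo} and Fact~\ref{f-Precompact}, together with the elementary hereditary properties of functional boundedness recorded in Lemma~\ref{l-Boun}(1). Completeness of $X$ will be used only where Fact~\ref{f-Precompact} actually demands it.

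First I would dispose of the equivalence between precompactness of $E$ and compactness of its closure $\overline{E}$: this is precisely Fact~\ref{f-Precompact} applied to the complete group $X$, so nothing beyond quoting it is needed. Next, the implication that functional boundedness forces precompactness is exactly the content of Proposition~\ref{p-Bo} (which in turn rests on the dichotomy of Proposition~\ref{p-UD} and the finiteness criterion of Lemma~\ref{l-FB}); this is the only genuinely substantive ingredient, and it has already been carried out.

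To close the cycle it then remains to show that compactness of $\overline{E}$, equivalently precompactness of $E$, returns functional boundedness of $E$. Here I would invoke Lemma~\ref{l-Boun}(1): the family $\mathcal{FB}(X)$ contains every compact subset of $X$ and is closed under passing to arbitrary subsets. Thus $\overline{E}\in\mathcal{FB}(X)$, and since $E\subseteq\overline{E}$ we conclude $E\in\mathcal{FB}(X)$. Combining the three steps gives the stated equivalences. I do not expect any real obstacle: the argument is a pure bookkeeping of previously proved facts, and completeness is the sole hypothesis doing any work, entering only through Fact~\ref{f-Precompact} which upgrades precompactness to honest compactness of the closure.
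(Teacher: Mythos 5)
Your proposal is correct and follows exactly the route the paper intends: the paper derives the corollary immediately from Proposition~\ref{p-Bo} (functionally bounded $\Rightarrow$ precompact) and Fact~\ref{f-Precompact} (precompact $\Leftrightarrow$ compact closure, using completeness), with the remaining implication closed, just as you do, by Lemma~\ref{l-Boun}(1) since compact sets are functionally bounded and the family $\mathcal{FB}(X)$ is closed under subsets. No gaps; your only addition is making explicit the trivial closing step that the paper leaves implicit in the word ``immediately.''
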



\subsection{The Bohr topology on MAP  groups}

For a subset $A$ of  an  Abelian topological group $X$, {\it the annihilator} of $A$ in $\widehat{X}$ is $A^\perp :=\{ u\in\widehat{X}: (u,x)=1, \forall x\in A\}$.
The group $\widehat{X}$ endowed with the compact-open topology we denote by $X^\wedge$.

Recall that a subgroup $H$ of an Abelian topological group $X$ is called {\it dually closed} in $X$ if for every $x\in X\setminus H$ there exists a character $\chi \in H^\perp$ such that $(\chi,x)\not= 1$, i.e., $H=\cap_{\chi\in H^\perp} \ker(\chi)$. Clearly, every dually closed subgroup is closed. A subgroup $H$ is called {\it dually embedded} in $X$ if every continuous character of $H$ can be extended to a  continuous character of $X$.
\begin{fact} \label{fOpen} {\rm  (\cite[Lemma 3.3]{Nob2})}
Every open subgroup of an Abelian topological group $X$ is dually closed and dually embedded in $X$.
\end{fact}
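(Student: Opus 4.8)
The plan is to exploit two structural consequences of openness: an open subgroup $H$ of $X$ is automatically closed, and the quotient $X/H$ carries the discrete topology. With these in hand, the entire statement reduces to the elementary facts that characters separate points on discrete Abelian groups and that the circle group $\mathbb{T}$ is injective in the category of Abelian groups (being divisible).

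First I would establish dual closedness. Fix $x\in X\setminus H$ and let $q\colon X\to X/H$ denote the canonical projection. Since $H$ is open, $X/H$ is discrete and $q(x)$ is a nontrivial element of it. A character $\bar\chi_0$ of the cyclic group $\langle q(x)\rangle$ with $\bar\chi_0(q(x))\neq 1$ always exists (map $q(x)$ to a root of unity of matching order if it is torsion, or to any nontrivial value of $\mathbb{T}$ otherwise), and by injectivity of $\mathbb{T}$ it extends to a character $\bar\chi\colon X/H\to\mathbb{T}$ with $\bar\chi(q(x))\neq 1$. The composition $\chi:=\bar\chi\circ q$ is then a \emph{continuous} character of $X$, automatically so because $q$ is continuous and $X/H$ is discrete; moreover $\chi$ kills $H$, so $\chi\in H^\perp$, while $(\chi,x)=\bar\chi(q(x))\neq 1$. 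This is precisely dual closedness.

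Next I would prove dual embeddedness. Let $\psi\in\widehat{H}$ be a continuous character. Applying injectivity of $\mathbb{T}$ once more, extend $\psi$ to an \emph{algebraic} character $\tilde\psi\colon X\to\mathbb{T}$. It remains only to check continuity of $\tilde\psi$, and here openness of $H$ is decisive: given a neighborhood $W$ of $1$ in $\mathbb{T}$, continuity of $\psi$ yields a neighborhood $U$ of the identity with $U\subseteq H$ and $\psi(U)\subseteq W$; since $H$ is open, $U$ is also a neighborhood of the identity in $X$, and on $U\subseteq H$ we have $\tilde\psi=\psi$, so $\tilde\psi(U)\subseteq W$. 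Hence $\tilde\psi$ is continuous at the identity and, being a homomorphism, continuous on all of $X$.

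The only delicate point, and the step I would treat most carefully, is the continuity of the extension $\tilde\psi$. The purely algebraic extension furnished by injectivity of $\mathbb{T}$ need not be continuous for an arbitrary subgroup; it is exactly the openness of $H$ that provides a neighborhood base at the identity consisting of sets contained in $H$, on which $\tilde\psi$ agrees with the continuous character $\psi$. No further hypotheses on $X$ are required, which is what makes the result valid for \emph{every} Abelian topological group.
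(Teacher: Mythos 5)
Your proof is correct and complete, and there is nothing in the paper to compare it against step by step: the paper states this fact with a citation to Noble (\cite[Lemma 3.3]{Nob2}) and gives no proof of its own. Your argument is the standard one underlying that citation. For dual closedness, discreteness of $X/H$ together with divisibility (hence injectivity) of $\TT$ produces the separating character, and continuity of $\bar\chi\circ q$ is automatic since every character of a discrete group is continuous. For dual embeddedness, you rightly isolate the only delicate point: the algebraic extension $\tilde\psi$ furnished by injectivity of $\TT$ is in general not continuous, and openness of $H$ is exactly what repairs this, since a neighborhood of $0$ in $H$ is already a neighborhood of $0$ in $X$ and a homomorphism continuous at the identity is continuous everywhere. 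It is worth observing how your argument relates to the paper's Lemma \ref{lBohr}(1): there, for a merely dually closed subgroup $H$, the separating character must be obtained from the \emph{precompact} quotient $X^+/H$, invoking the nontrivial fact that precompact groups admit enough continuous characters; in your setting the openness of $H$ makes the quotient discrete, so only the elementary injectivity of $\TT$ is needed, which is precisely what makes the result valid for every Abelian topological group with no MAP-type hypothesis.
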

Note that reflexive groups may contain closed non-dually closed subgroups, see \cite[Example (ii)]{Nob2}.

An Abelian group $X$ endowed with the discrete topology we denote by $X_d$. If $H$ is a dense subgroup of $(X_d)^\wedge$, we denote by $\sigma(X,H)$ or $T_H$ the weakest group topology on $X$ for which the elements of $H$ are continuous.
In particular, for each $X\in \mathbf{MAPA}$  the sets of the form
\[
\{ x\in X: \; (\chi, x) \in \TT_+ \},
\]
where $\chi\in \widehat{X}$ and $\TT_+ :=\{z\in  \TT:\ {\rm Re}(z)\ge 0\}$, form a subbase of the weak topology $\sigma(X, \widehat{X})$ on $X$.

\begin{fact} {\rm  (\cite{CoR})} \label{fBohr}
Let $X$ be an Abelian group and let $H_1$ and $H_2$ be dense subgroups of $(X_d)^\wedge$. Then $T_{H_1} \subseteq T_{H_2}$ if and only if $H_1 \subseteq H_2$. Hence $T_{H_1} = T_{H_2}$ iff
 $H_1 = H_2$.
\end{fact}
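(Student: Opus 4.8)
This statement is the Comfort--Ross correspondence between precompact Hausdorff group topologies on an abstract Abelian group and the dense (equivalently, point-separating) subgroups of $(X_d)^\wedge$. The plan is to reduce both implications to the single structural fact that $H$ can be recovered from $T_H$ as its group of continuous characters, namely
\begin{equation*}
\widehat{(X,T_H)} = H \quad \text{for every dense subgroup } H \le (X_d)^\wedge. \tag{$\ast$}
\end{equation*}
Once $(\ast)$ is available, the theorem is formal. If $H_1\subseteq H_2$, then each $\chi\in H_1$ lies in $H_2$ and is therefore $T_{H_2}$-continuous by the very definition of $T_{H_2}=\sigma(X,H_2)$; since $T_{H_1}$ is the weakest group topology making all elements of $H_1$ continuous, this gives $T_{H_1}\subseteq T_{H_2}$. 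Conversely, if $T_{H_1}\subseteq T_{H_2}$, then every $T_{H_1}$-continuous character is $T_{H_2}$-continuous, whence $H_1=\widehat{(X,T_{H_1})}\subseteq \widehat{(X,T_{H_2})}=H_2$ by $(\ast)$. The final assertion $T_{H_1}=T_{H_2}\Leftrightarrow H_1=H_2$ is then obtained by applying the equivalence twice.

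It remains to prove $(\ast)$, which is the heart of the argument. The inclusion $H\subseteq \widehat{(X,T_H)}$ holds by definition. For the reverse inclusion I would realize $T_H=\sigma(X,H)$ as the initial topology of the evaluation map $j\colon X\to \TT^H$, $x\mapsto (\chi(x))_{\chi\in H}$. Since $H$ is dense in $(X_d)^\wedge$, it separates the points of $X$ (its annihilator in $X$ is trivial), so $T_H$ is Hausdorff and $j$ is a topological embedding. Put $K:=\mathrm{cl}_{\TT^H}\big(j(X)\big)$. Then $K$ is a compact, hence complete, Abelian group containing $j(X)$ as a dense subgroup, so $K$ is the completion of $(X,T_H)$; consequently every continuous character of $(X,T_H)$ is uniformly continuous and extends uniquely to a continuous character of $K$.

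The decisive step is to identify these characters. Since $K$ is a closed subgroup of the compact group $\TT^H$, Pontryagin duality makes the restriction map $\widehat{\TT^H}\to \widehat{K}$ surjective, and $\widehat{\TT^H}$ consists precisely of the finitely supported families $(n_\chi)_{\chi\in H}\in \ZZ^{(H)}$ acting by $(t_\chi)\mapsto \prod_{\chi} t_\chi^{\,n_\chi}$. Restricting such a character along $j$ gives $x\mapsto \prod_{\chi} \chi(x)^{n_\chi}=\big(\prod_{\chi}\chi^{\,n_\chi}\big)(x)$, i.e.\ the character $\prod_{\chi}\chi^{\,n_\chi}$, which again lies in $H$ because $H$ is a \emph{subgroup} of $(X_d)^\wedge$. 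Composing the three steps---a $T_H$-continuous character extends to a character of $K$, which is the restriction of a character of $\TT^H$, which in turn restricts along $j$ to an element of $H$---yields $\widehat{(X,T_H)}\subseteq H$ and hence $(\ast)$.

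The step I expect to demand the most care is closing this loop in the last paragraph: one must check that the ``extend to $K$'' and ``restrict from $\TT^H$'' operations are genuinely compatible, so that an arbitrary $T_H$-continuous character is actually recovered as one of the monomials $\prod_{\chi}\chi^{\,n_\chi}$, and one must pin down exactly where density of $H$ is used---namely to guarantee that $T_H$ is Hausdorff and that $j$ is injective, without which the identification $\widehat{K}\!\mid_{j(X)}=H$ could break down.
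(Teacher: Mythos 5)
Your proof is correct. The paper does not prove this Fact at all---it is quoted verbatim from Comfort and Ross \cite{CoR}---and your argument is essentially the classical one from that source: recover $H$ as $\widehat{(X,T_H)}$ by realizing $T_H$ as the initial topology of the evaluation $j\colon X\to \TT^H$, passing to the compact completion $K=\mathrm{cl}_{\TT^H}(j(X))$, extending a continuous character to $K$ by uniform continuity, lifting it to $\widehat{\TT^H}=\ZZ^{(H)}$ (closed subgroups of compact Abelian groups are dually embedded), and restricting back along $j$ to land in the subgroup $H$; the compatibility $\phi=\bar\phi\circ j=\Phi\circ j$ you flag is immediate from how the extension is defined, the derivation of both implications from $(\ast)$ is formal, and your identification of where density is used (triviality of the annihilator, hence Hausdorffness of $T_H$ and injectivity of $j$) is accurate.
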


The next lemma generalizes some well-known results for LCA groups, see for example \cite{TriAr91, Tri91}. Item (1) and the necessity in item (4) see in \cite{Dik-cl} and \cite[2.1]{Aus2}. Below we generalize this lemma to non-Abelian MAP groups. For the convenience of the reader we give its complete proof.
\begin{lemma} \label{lBohr}
Let $H$ be a subgroup of a MAP Abelian group $X$. Then
\begin{enumerate}
\item[{\rm (1)}]  {\rm (\cite{{Dik-cl}})}  $H$ is dually closed in $X$ if and only if $H$ is closed in $X^+$.
\item[{\rm (2)}] If $H$ is a dually closed subgroup of $X$, then $(X/H)^+ =X^+/H$.
\item[{\rm (3)}] $\sigma(X,\widehat{X})|_H \leq  \sigma(H,\widehat{H})$.
\item[{\rm (4)}] {\rm (\cite{Her})} $H$ is dually embedded in $X$ if and only if $H^+ = \left(H, \sigma(X,\widehat{X})|_H\right)$.
\item[{\rm (5)}]  $H$ is dually closed and dually embedded in $X$ if and only if $H^+ $ is a closed subgroup of $X^+$.
\end{enumerate}
\end{lemma}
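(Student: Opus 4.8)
The plan is to establish (1) first, since (3) is essentially immediate and each of (2), (4), (5) reduces to (1) together with standard facts about the Bohr topology recorded above. Throughout write $H^{\perp\perp}:=\bigcap_{\chi\in H^\perp}\ker\chi$; by definition $H$ is dually closed exactly when $H=H^{\perp\perp}$, and $H^{\perp\perp}$ is closed in $X^+$ because every $\chi\in\widehat X$ is $\sigma(X,\widehat X)$-continuous and $\{1\}$ is closed in $\TT$. Hence $\mathrm{cl}_{X^+}(H)\subseteq H^{\perp\perp}$ always, and (1) will follow once I show the reverse inclusion $H^{\perp\perp}\subseteq \mathrm{cl}_{X^+}(H)$, that is, $\mathrm{cl}_{X^+}(H)=H^{\perp\perp}$.

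For this reverse inclusion, which I expect to be the main technical point, I would argue by contradiction. If $x\in H^{\perp\perp}\setminus \mathrm{cl}_{X^+}(H)$, choose a basic $\sigma(X,\widehat X)$-neighbourhood of $x$ determined by finitely many characters $\chi_1,\dots,\chi_n\in\widehat X$ that misses $H$, and consider the continuous homomorphism $\phi=(\chi_1,\dots,\chi_n)\colon X\to\TT^n$. Then $\phi(x)$ lies outside the closed subgroup $K:=\mathrm{cl}_{\TT^n}\phi(H)$ of the compact group $\TT^n$, so by Pontryagin duality for $\TT^n$ there is a character $(z_1,\dots,z_n)\mapsto \prod_i z_i^{m_i}$ of $\TT^n$ killing $K$ but not $\phi(x)$. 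The character $\chi:=\prod_i\chi_i^{m_i}\in\widehat X$ then annihilates $H$, so $\chi\in H^\perp$, while $(\chi,x)\neq 1$, contradicting $x\in H^{\perp\perp}$. This proves $\mathrm{cl}_{X^+}(H)=H^{\perp\perp}$ and hence (1): $H$ is dually closed iff $H=H^{\perp\perp}=\mathrm{cl}_{X^+}(H)$ iff $H$ is closed in $X^+$.

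Part (3) is direct: every $\chi\in\widehat X$ restricts to a character $\chi|_H\in\widehat H$, so each subbasic $\sigma(X,\widehat X)|_H$-open set is already $\sigma(H,\widehat H)$-open, giving $\sigma(X,\widehat X)|_H\le\sigma(H,\widehat H)$. For (4), note that by (3) equality of the two topologies on $H$ amounts to $\sigma(H,\widehat H)\le\sigma(X,\widehat X)|_H$. Writing $R:=\{\chi|_H:\chi\in\widehat X\}\le\widehat H$, the subspace topology is exactly $\sigma(H,R)=T_R$, while $H^+=\sigma(H,\widehat H)=T_{\widehat H}$; since $X$ is MAP both $R$ and $\widehat H$ separate the points of $H$ and are therefore dense in $(H_d)^\wedge$. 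Thus Fact \ref{fBohr} gives $T_R=T_{\widehat H}\iff R=\widehat H$, and $R=\widehat H$ is precisely the statement that $H$ is dually embedded; this proves (4).

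For (2), first use (1): since $H$ is dually closed it is closed in $X^+$, so $X^+/H$ is a Hausdorff group, and being a quotient of the precompact group $X^+$ it is itself precompact. A precompact group coincides with its own Bohr modification, so its topology is the weak topology $\sigma\big(X/H,\widehat{X^+/H}\big)$. It then remains to identify $\widehat{X^+/H}$ with $\widehat{X/H}\cong H^\perp$: a character of $X^+/H$ pulls back through the quotient map to an element of $\widehat X$ vanishing on $H$, and conversely each $\chi\in H^\perp$ factors to a $\sigma$-continuous character of $X/H$; hence $\widehat{X^+/H}=H^\perp=\widehat{X/H}$, and therefore $X^+/H=\sigma(X/H,H^\perp)=(X/H)^+$. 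Finally (5) is the conjunction of (1) and (4): $H^+$ is a closed topological subgroup of $X^+$ exactly when $H$ is closed in $X^+$ (equivalently dually closed, by (1)) and the subspace topology from $X^+$ agrees with $\sigma(H,\widehat H)$ (equivalently $H$ is dually embedded, by (4)).
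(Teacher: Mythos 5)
Your proof is correct, and parts (2), (3) and (5) run essentially parallel to the paper's own argument (the paper likewise derives (2) by identifying $\widehat{X/H}$ with $\widehat{X^+/H}$ via Fact \ref{fBohr} and the Comfort--Ross fact that a precompact group equals its Bohr modification, and obtains (5) as the conjunction of (1) and (4)). The genuine difference is in the converse half of (1). The paper passes to the quotient $q^+:X^+\to X^+/H$, observes that $X^+/H$ is a Hausdorff precompact group, and invokes the fact that continuous characters separate the points of such a group to produce $\eta\in(X^+/H)^\wedge$ with $\eta\circ q^+\in H^\perp$ and $(\eta\circ q^+,x)\neq 1$. You instead prove the sharper identity $\mathrm{cl}_{X^+}(H)=\bigcap_{\chi\in H^\perp}\ker\chi$ directly: a basic weak neighbourhood separating $x$ from $H$ involves only finitely many characters $\chi_1,\dots,\chi_n$, and mapping into $\TT^n$ via $\phi=(\chi_1,\dots,\chi_n)$ you separate $\phi(x)$ from the closed subgroup $\overline{\phi(H)}$ using Pontryagin duality for $\TT^n$ alone; the resulting $\chi=\prod_i\chi_i^{m_i}$ lies in $H^\perp$ and witnesses the contradiction. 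Your route is more elementary --- no quotient groups and no completions of precompact groups are needed --- and it yields as a by-product a description of the $X^+$-closure of an \emph{arbitrary} subgroup, not only the dually closed ones. What the paper's route buys is generality: the same quotient-plus-Peter--Weyl pattern is reused verbatim later in the paper to prove the non-Abelian analogue of items (1), (4), (5), where the $\TT^n$ argument has no direct counterpart since irreducible representations are no longer one-dimensional. In (4) you also streamline slightly: identifying $\sigma(X,\widehat{X})|_H$ with $T_R$ for $R=\widehat{X}|_H$, and noting that both $R$ and $\widehat{H}$ separate the points of $H$ (hence are dense in $(H_d)^\wedge$), you obtain both implications at once from Fact \ref{fBohr}, whereas the paper proves the forward implication by the explicit extension computation and uses Fact \ref{fBohr} only for the converse; these are equivalent in substance.
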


\begin{proof}
(1) Let $H$ be dually closed in $X$. Then $H=\cap_{\chi\in H^\perp} \ker(\chi)$. So $H$ is closed in $\sigma(X,\widehat{X})$.

Conversely, let $H$ be closed in $\sigma(X,\widehat{X})$ and $x\in X\setminus H$. Denote by $q^+ : X^+ \to X^+ /H$ the quotient map. Then $X^+ /H$ is a precompact group and  $q^+ (x)\not= 0$. It is well-known that there exists $\eta\in (X^+ /H)^\wedge$ such that $(\eta, q^+ (x))\not= 1$. So $\eta\circ q^+  \in \widehat{X^+} =\widehat{X}$ and $(\eta\circ q^+ , x)=(\eta, q^+ (x))\not= 1$.  Thus $H$ is dually closed in $X$.

(2) By item (1), $H$ is a closed subgroup of $X^+$. Hence $X/H$ and $X^+ /H$ are well-defined. Let $i: X/H \to X^+ /H$ be the identity continuous isomorphism and let $q: X\to X/H$ be the quotient map. By Fact \ref{fBohr}, we have to show that $\widehat{X/H}=\widehat{X^+/H}$. Since $i$ is continuous we have $\widehat{X^+/H} \subseteq \widehat{X/H}$.

Let us prove the converse inclusion.
Fix $\eta\in \widehat{X/H}$. Then $\eta\circ q\in \widehat{X}$. So $\eta\circ q\in \widehat{X^+}$. Since $\eta\circ q|_H =0$ and $H$ is closed in $X^+$, we obtain $\eta\in \widehat{X^+/H}$. Thus $\widehat{X/H} \subseteq \widehat{X^+/H}$. Therefore, $\widehat{X/H}=\widehat{X^+/H}$ and $(X/H)^+ =X^+/H$.

(3) Since $\chi|_H \in \widehat{H}$ for every $\chi\in \widehat{X}$, the assertion follows from  Fact \ref{fBohr}.

(4) Assume that $H$ is dually embedded in $X$ and $\eta\in \widehat{H}$. Denote by $\chi$ an arbitrary extension of $\eta$ to a continuous character of $X$. Then
\[
\{ h\in H : \ (\eta, h) \in \TT_+ \} = H \cap \{ x\in X : \ (\chi, x) \in \TT_+ \}.
\]
So $\sigma(H,\widehat{H}) \subseteq \sigma(X,\widehat{X})|_H$. The converse inclusion follows from item (3). Thus $\sigma(H,\widehat{H}) = \sigma(X,\widehat{X})|_H$. Therefore, $H^+ = \left(H, \sigma(X,\widehat{X})|_H\right)$.

Let now $H^+ = \left(H, \sigma(X,\widehat{X})|_H\right)$. So  $\sigma(H,\widehat{H}) = \sigma(X,\widehat{X})|_H =\sigma(H,\widehat{X}|_H)$. It follows from  Fact \ref{fBohr} that $\widehat{H}= \widehat{X}|_H$. This explicitly means that every $\eta\in \widehat{H}$  can be extended to $\chi\in \widehat{X}$. Thus $H$ is dually embedded in $X$.

(5) follows from items (1) and (4).
\end{proof}

The next lemma extends Lemma 4.4 in \cite{Tri91}.
\begin{lemma} \label{l-FbMAP}
Let $H$ be an open subgroup of a MAP Abelian group $X$. If $E$ is a functionally bounded subset in $X^+$,  then $E$ is contained in a finite union of cosets of $H$.
\end{lemma}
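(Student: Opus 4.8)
The plan is to reduce the statement to the \emph{discrete} quotient $X/H$ and then to invoke that discrete groups respect functional boundedness. First I would record that, since $H$ is open, Fact \ref{fOpen} guarantees that $H$ is dually closed (and dually embedded) in $X$; in particular, Lemma \ref{lBohr}(2) gives the topological identification $X^+/H = (X/H)^+$, so that the canonical map $q^+\colon X^+ \to (X/H)^+$ is a continuous homomorphism. The crucial structural feature is that $H$ being open forces $X/H$ to be discrete, hence a (discrete, and therefore) LCA group, \emph{regardless} of whether $X$ itself is LCA.

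Next I would push the functionally bounded set forward. Since $q^+$ is continuous and $E\in\mathcal{FB}(X^+)$, Lemma \ref{l-Boun}(2) yields that $q^+(E)$ is functionally bounded in $(X/H)^+$. Because $X/H$ is a discrete LCA group, the Trigos-Arrieta theorem asserting that functional boundedness is a respected property in $\mathbf{LCA}$ \cite{TriAr91, Tri91} applies and gives $\mathcal{FB}\big((X/H)^+\big)=\mathcal{FB}(X/H)$; thus $q^+(E)$ is functionally bounded already in the discrete group $X/H$. But in a discrete group every subset is trivially uniformly discrete (taking $U=\{0\}$), so Lemma \ref{l-FB} forces $q^+(E)$ to be finite.

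Finally, since $q^+$ and the quotient map $q\colon X\to X/H$ coincide on underlying sets, $q(E)=q^+(E)$ is a finite subset of $X/H$, whence $E\subseteq q^{-1}\big(q(E)\big)$ is contained in the finite union of the corresponding cosets of $H$. The only genuinely delicate point is the identification $X^+/H=(X/H)^+$ of Lemma \ref{lBohr}(2): this is what transfers the ambient Bohr structure on $X$ to the quotient and lets the discreteness of $X/H$ be exploited. Once that identification is in place, the rest is a direct chain of applications of the quoted facts, with no further obstacle expected; an alternative, fully self-contained route would replace the appeal to \cite{TriAr91, Tri91} by a direct proof that every infinite subset of the Bohr modification of a discrete group fails to be functionally bounded, but the cited result makes this step immediate.
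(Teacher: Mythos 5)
Your proof is correct and takes essentially the same route as the paper's: both pass to the discrete quotient $X/H$ via Fact \ref{fOpen} and Lemma \ref{lBohr}(2), push $E$ forward through the quotient map using Lemma \ref{l-Boun}, and conclude that the image is finite from Trigos-Arrieta's work. The only cosmetic difference is at the last step, where the paper cites the discrete case directly as \cite[Lemma 4.4]{Tri91}, while you deduce it from the general theorem that functional boundedness is respected in $\mathbf{LCA}$ together with the trivial observation that functionally bounded subsets of a discrete group are finite.
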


\begin{proof}
By Fact \ref{fOpen},  $H$ is dually closed in $X$. Hence $H$ is closed in $X^+$ and $(X/H)^+ = X^+/H$ by Lemma \ref{lBohr}. Let  $\pi: X^+ \to X^+/H$ be the quotient map. Then $\pi(E)$ is functionally bounded subset in $X^+/H$ (see Lemma \ref{l-Boun}). We have to show that $\pi(E)$ is finite.
As $X/H$ is discrete and $\pi(E)$ is functionally bounded in  $(X/H)^+$, $\pi(E)$ is finite by \cite[Lemma 4.4]{Tri91}.
\end{proof}


Now we consider the Bohr compactification of non-Abelian groups.
Let $X$ be a Hausdorff topological group. Recall that a compact group $bX$ is called the {\it Bohr compactification} of $X$ if there exists a continuous homomorphism $i$ from $X$ onto a dense subgroup of $bX$ such that the pair $(bX,i)$ satisfies the following {\it universal property}: If $p:X\to C$ is a continuous homomorphism into a compact group $C$, then there exists a continuous homomorphism $j^p: bX \to C$ such that $p=j^p\circ i$. Following von Neumann \cite{Neu},  the group $X$ is called {\it maximally almost periodic} (MAP) if the group $X^+$ is Hausdorff, where $X^+$ is the group $X$ endowed with the topology induced from $bX$. The family $\mathbf{MAP}$ of all MAP topological groups is a subcategory of the category $\mathbf{TG}$ of all Hausdorff topological groups and continuous homomorphisms.

Note that every  irreducible representation of a compact group is finite-dimensional (see  \cite[22.13]{HR1}). It is well-known also that we can identify  the set of all finite-dimensional irreducible representations of a MAP {\it Abelian} group $X$ with the usual set  of its continuous characters. 

Let $X$ be a MAP group. Denote by $\widehat{X}$  the set of all finite-dimensional irreducible representations of $X$. For a  finite-dimensional  irreducible representation $u \in \widehat{X}$ of $X$ by unitary operators on a Hilbert space $\mathcal{H}$ and an arbitrary $x\in X$ we denote by $(u,x)$ the operator $u(x)$ on the representation space $\mathcal{H}$ of $u$. The identity operator on $\mathcal{H}$ is denoted by $I_\mathcal{H}$ (or just by $I$).
The next folklore lemma easily follows from the definition of Bohr compactification and irreducible representation and Corollary 3.6.17 of \cite{ArT} (we denote by $\overline{X}$ the completion of $X$):
\begin{lemma} \label{l-BohrRep}
If $X$ is a MAP group, then $b\overline{X}=bX$ and
$
\widehat{X}=\widehat{\overline{X}}=\widehat{X^+} =\widehat{bX}.
$
\end{lemma}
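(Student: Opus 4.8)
The plan is to exploit three ingredients: the universal property of the Bohr compactification, the extension theorem \cite[3.6.17]{ArT}, which says that a continuous homomorphism from $X$ into a complete group extends uniquely to a continuous homomorphism on $\overline{X}$, and the elementary observation that irreducibility of a finite-dimensional representation is detected on any dense subgroup. The last point is the glue used throughout: since for finite-dimensional $\mathcal{H}$ the set of operators in $U(\mathcal{H})$ leaving a fixed subspace $W$ invariant is closed, a continuous representation and its restriction to a dense subgroup have exactly the same invariant subspaces and hence are simultaneously irreducible. Note also that $U(\mathcal{H})$ is compact for finite-dimensional $\mathcal{H}$, so each $u\in\widehat{X}$ is a continuous homomorphism into a compact (thus complete) group, which is precisely the setting to which the two preceding tools apply.

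First I would prove the chain $\widehat{X}=\widehat{\overline{X}}=\widehat{bX}=\widehat{X^+}$. For $\widehat{X}=\widehat{\overline{X}}$: restriction to the dense subgroup $X$ carries $\widehat{\overline{X}}$ into $\widehat{X}$, while \cite[3.6.17]{ArT} extends each $u\in\widehat{X}$ to a continuous $\overline{u}\colon\overline{X}\to U(\mathcal{H})$; the density argument shows $\overline{u}$ is again irreducible and that restriction and extension are mutually inverse. For $\widehat{X}=\widehat{bX}$: every $u\in\widehat{X}$, being a continuous homomorphism into the compact group $U(\mathcal{H})$, factors as $u=j^u\circ i$ by the universal property, and since $i(X)$ is dense in $bX$ the map $j^u$ is the unique continuous representation of $bX$ with $j^u\circ i=u$; density again transfers irreducibility, so $v\mapsto v\circ i$ and $u\mapsto j^u$ are inverse bijections. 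Finally $\widehat{X}=\widehat{X^+}$ is immediate: the Bohr topology on $X^+$ is coarser than the original one, so every $v\in\widehat{X^+}$ lies in $\widehat{X}$; conversely, writing $u=j^u\circ i$ shows that each $u\in\widehat{X}$ is continuous for the topology induced from $bX$, that is, on $X^+$.

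To obtain $b\overline{X}=bX$ I would verify that the pair $\bigl(b\overline{X},\,\overline{i}|_X\bigr)$ satisfies the defining universal property of the Bohr compactification of $X$, where $\overline{i}\colon\overline{X}\to b\overline{X}$ is the canonical map. The image of $X$ is dense in $b\overline{X}$ because $X$ is dense in $\overline{X}$ and $\overline{i}(\overline{X})$ is dense in $b\overline{X}$, whence $\mathrm{cl}\bigl(\overline{i}(X)\bigr)\supseteq\overline{i}(\overline{X})$ is all of $b\overline{X}$. Given any continuous homomorphism $p\colon X\to C$ into a compact group $C$, \cite[3.6.17]{ArT} extends it to $\overline{p}\colon\overline{X}\to C$, which the universal property of $b\overline{X}$ factors as $\overline{p}=j^{\overline{p}}\circ\overline{i}$; restricting to $X$ gives $p=j^{\overline{p}}\circ(\overline{i}|_X)$, as required. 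By the uniqueness of the Bohr compactification up to the canonical isomorphism this yields $b\overline{X}=bX$.

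The only genuinely delicate point is the repeated claim that irreducibility passes between a representation and its restriction to a dense subgroup; once this closedness-of-stabilizers observation is recorded, every remaining step is a direct invocation of the universal property, of the extension theorem \cite[3.6.17]{ArT}, or of the coarseness of the Bohr topology.
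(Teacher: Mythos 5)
Your proof is correct and follows exactly the route the paper intends: the paper gives no written proof, stating only that the lemma ``easily follows from the definition of Bohr compactification and irreducible representation and Corollary 3.6.17 of \cite{ArT}'', and your argument is precisely this sketch carried out in full. In particular, your key observation that irreducibility transfers across dense subgroups because stabilizers of subspaces are closed in $U(\mathcal{H})$ is the correct way to make the ``easily follows'' rigorous.
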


Next fact immediately follows from the definition of the Bohr compactification (cf. \cite{BaMP2}):
\begin{fact} \label{f-Bohr} 
Let $\phi: X\to Y$ be a continuous homomorphism between MAP groups. Then $\phi: X^+ \to Y^+$ is also continuous.
\end{fact}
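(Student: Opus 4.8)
The plan is to use the universal property of the Bohr compactification to produce a continuous homomorphism $b\phi : bX \to bY$ that lifts $\phi$, and then to run a short diagram chase using the fact that the topology of $X^+$ (respectively $Y^+$) is the topology induced on $X$ (respectively $Y$) from $bX$ (respectively $bY$) through the canonical map $i_X : X \to bX$ (respectively $i_Y : Y \to bY$), i.e.\ the initial topology with respect to that map.

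First I would form the composition $i_Y \circ \phi : X \to bY$. Since $\phi$ is continuous by hypothesis and $i_Y$ is continuous by the definition of the Bohr compactification, this is a continuous homomorphism from $X$ into the \emph{compact} group $bY$. Applying the universal property of the pair $(bX, i_X)$ to the map $p := i_Y \circ \phi$ then yields a (unique) continuous homomorphism $b\phi : bX \to bY$ satisfying $b\phi \circ i_X = i_Y \circ \phi$.

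Next I would exploit the defining description of the topologies on $X^+$ and $Y^+$. Since the topology of $Y^+$ is the initial topology with respect to $i_Y$, to verify that $\phi : X^+ \to Y^+$ is continuous it suffices to verify that $i_Y \circ \phi : X^+ \to bY$ is continuous. For a basic open set $W$ of $bY$ one computes
\[
\phi^{-1}\bigl(i_Y^{-1}(W)\bigr) = (i_Y \circ \phi)^{-1}(W) = (b\phi \circ i_X)^{-1}(W) = i_X^{-1}\bigl(b\phi^{-1}(W)\bigr),
\]
and since $b\phi$ is continuous the set $b\phi^{-1}(W)$ is open in $bX$, whence its $i_X$-preimage is open in $X^+$ by the very definition of the topology of $X^+$. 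This establishes the continuity of $\phi : X^+ \to Y^+$.

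I do not expect a genuine obstacle here: the statement is a formal consequence of the universal property of $bX$ together with the description of $\tau^+$ as the topology induced from $bX$. The only point deserving a moment's care is that one needs neither $i_X$ nor $i_Y$ to be injective for this argument — only the continuity of the lift $b\phi$ and the initial-topology description of $X^+$ and $Y^+$ are used. In particular the proof goes through verbatim for arbitrary (not necessarily Abelian) MAP groups, which is precisely the generality in which the surrounding discussion, via finite-dimensional irreducible representations, places the statement.
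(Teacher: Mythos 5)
Your proof is correct and is precisely the argument the paper has in mind: the paper states this fact without proof, noting only that it ``immediately follows from the definition of the Bohr compactification,'' and your universal-property lift $b\phi$ together with the initial-topology description of $X^+$ and $Y^+$ is exactly that immediate argument spelled out. Your closing remark is also apt, since the paper indeed invokes this fact in the non-Abelian MAP setting (e.g.\ in Proposition \ref{t-cRes}), where injectivity of $i_X$, $i_Y$ plays no role.
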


Note that the definitions of the dual closure and the dual embedding are also transferred to non-Abelian case without  modifications.
A subgroup $H$ of a $MAP$ group $X$ is called {\it dually closed } in $X$ if for every $g\in X\setminus H$ there exists an irreducible (finite-dimensional) representation $\chi\in H^\perp := \{ \eta \in \widehat{X} : \eta|_H =I\}$ such that $(\chi,g)\not= I$ (\cite{Ga2}). And $H$ is named {\it dually embedded} in $X$ if every $\chi\in \widehat{H}$ can be extended to an irreducible (finite-dimensional) representation of $X$ (\cite{Her}).
The next lemma generalizes items (1), (4) and (5) of  Lemma \ref{lBohr}:
\begin{lemma}
Let $H$ be a subgroup of a MAP  group $X$ and $p:H\to X$ be the natural embedding. Then
\begin{enumerate}
\item[{\rm (1)}] $H$ is dually closed in $X$ if and only if $H$ is a normal subgroup of $X$ and $H$ is closed in $X^+$.
\item[{\rm (2)}] {\rm (\cite{Her})} $H$ is dually embedded in $X$ if and only if $j^p|_H : H^+ \to X^+$ is an embedding.
\item[{\rm (3)}]  $H$ is dually closed and dually embedded in $X$ if and only if $H^+ $ is a closed subgroup of $X^+$.
\end{enumerate}
\end{lemma}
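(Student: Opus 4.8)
The plan is to carry the proofs of items (1), (4) and (5) of Lemma \ref{lBohr} over to the non-Abelian setting, replacing separation by continuous characters with separation by finite-dimensional irreducible representations via the Peter--Weyl theorem. The only genuinely new feature is the normality condition appearing in (1); it enters because kernels of representations are automatically normal subgroups. Throughout I would use Lemma \ref{l-BohrRep}, which gives $\widehat{X}=\widehat{X^+}$, so that every $\chi\in\widehat{X}$ is a continuous representation of the precompact group $X^+$ and hence $\ker(\chi)$ is closed in $X^+$.

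For item (1), I first observe that $H$ is dually closed if and only if $H=\bigcap_{\chi\in H^\perp}\ker(\chi)$: the inclusion $H\subseteq\ker(\chi)$ holds for each $\chi\in H^\perp$ by definition of $H^\perp$, while the reverse inclusion $\bigcap_{\chi\in H^\perp}\ker(\chi)\subseteq H$ is exactly the statement that every $g\in X\setminus H$ is moved by some $\chi\in H^\perp$. In the forward direction, each $\ker(\chi)$ is a normal subgroup of $X$ (being the kernel of a homomorphism) and is closed in $X^+$ (by Lemma \ref{l-BohrRep}); hence their intersection $H$ is normal and closed in $X^+$. For the converse, assume $H$ is normal and closed in $X^+$. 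Then $X^+/H$ is a Hausdorff group, and as a quotient of the precompact group $X^+$ it is precompact, so its completion is a compact group. Given $g\in X\setminus H$, its image $q^+(g)$ under the quotient map $q^+\colon X^+\to X^+/H$ is distinct from the identity; by the Peter--Weyl theorem \cite{HR1} there is a finite-dimensional irreducible representation $\rho$ of this compact completion with $\rho(q^+(g))\neq I$. Composing the restriction of $\rho$ to $X^+/H$ with $q^+$ yields $\chi:=\rho\circ q^+\in\widehat{X^+}=\widehat{X}$, which is finite-dimensional and irreducible (composition with a surjection), satisfies $\chi|_H=I$ so that $\chi\in H^\perp$, and has $\chi(g)\neq I$. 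Thus $H$ is dually closed.

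Item (2) is the non-Abelian analogue of Lemma \ref{lBohr}(4) and is due to Hern\'andez \cite{Her}, so I would simply cite it; its content is that dual embeddedness is equivalent to $j^p|_H\colon H^+\to X^+$ being a topological embedding, i.e. to the coincidence of the Bohr topology $\sigma(H,\widehat{H})$ on $H$ with the subspace topology induced from $X^+$, the crux being that every $\eta\in\widehat{H}$ is the restriction of some $\chi\in\widehat{X}$. Then item (3) follows by combining (1) and (2), exactly as Lemma \ref{lBohr}(5) followed from (1) and (4): if $H$ is dually closed and dually embedded, then by (1) it is normal and closed in $X^+$ and by (2) the map $j^p|_H$ is an embedding, so that $H^+$ is realized as a closed (normal) subgroup of $X^+$; conversely, if $H^+$ is a closed subgroup of $X^+$, then the embedding $j^p|_H$ gives dual embeddedness by (2), while closedness of the image together with normality yields dual closedness by (1).

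I expect the main obstacle to be the careful bookkeeping around normality, which has no counterpart in the Abelian Lemma \ref{lBohr}: one must notice that dual closedness forces $H$ to be normal, because $\bigcap_{\chi\in H^\perp}\ker(\chi)$ is always normal, and conversely that normality is precisely the extra hypothesis that makes $X^+/H$ a \emph{group} on which Peter--Weyl separation can be invoked. A secondary point to watch is that restrictions of irreducible representations of $X$ to $H$ need not stay irreducible, so in (2) one argues with the topologies generated by all finite-dimensional representations rather than characterwise as in the Abelian case.
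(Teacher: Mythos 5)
Your proposal is correct and follows essentially the same route as the paper's proof: the forward direction of (1) via the identity $H=\bigcap_{\chi\in H^\perp}\ker(\chi)$, the converse via the precompact quotient $X^+/H$, its compact completion and the Peter--Weyl theorem together with Lemma \ref{l-BohrRep}, item (2) cited to \cite{Her} exactly as in the paper, and (3) deduced by combining (1) and (2). Your additional details (closedness of the kernels in $X^+$, preservation of irreducibility under restriction to a dense subgroup and composition with the quotient map) merely make explicit steps the paper leaves implicit.
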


\begin{proof}
(1) Let $H$ be dually closed in $X$. Then $H=\cap_{\chi\in H^\perp} \ker(\chi)$. So $H$ is a normal subgroup of $X$ and it is closed in $X^+$.

Conversely, let $H$ be closed and normal in $X^+$ and let $x\in X\setminus H$. Denote by $q^+ : X^+ \to X^+ /H$ the quotient map. Then $X^+ /H$ is a precompact group and  $q^+ (x)\not= e$. Since $X^+/H$ has a compact completion \cite[3.7.16]{ArT}, Peter-Weyl's theorem implies that there exists $\eta\in \widehat{X^+ /H}$ such that $(\eta, q^+ (x))\not= I$. So, by Lemma \ref{l-BohrRep}, $\eta\circ q^+  \in \widehat{X^+} =\widehat{X}$ and $(\eta\circ q^+ , x)=(\eta, q^+ (x))\not= I$.  Thus $H$ is dually closed in $X$.

(3) follows from items (1) and (2).
\end{proof}


In what follows we use the next property:
\begin{definition}
A  topological group $X$ is said to have an {\em $\SP$-property} if every separable precompact subset of $X$ has compact closure.
\end{definition}
If a topological group $X$ is complete, the closure $\overline{E}$ of each precompact subset $E$ is compact by Fact \ref{f-Precompact}. So we obtain:
\begin{proposition}
Every complete topological group has the $\SP$-property.
\end{proposition}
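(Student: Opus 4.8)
The plan is to observe that the statement is an immediate consequence of Fact \ref{f-Precompact}, so the proof is essentially a one-line citation. First I would fix an arbitrary separable precompact subset $E$ of a complete topological group $X$. Since $X$ is (Ra\u{\i}kov) complete and $E$ is precompact, Fact \ref{f-Precompact} applies verbatim and gives that the closure $\overline{E}$ is compact. As $E$ was an arbitrary separable precompact subset, every such subset has compact closure, which is precisely the definition of the $\SP$-property. Hence $X$ has the $\SP$-property.

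There is no substantive obstacle here; the single step is the invocation of Fact \ref{f-Precompact}, and in fact the separability hypothesis plays no role whatsoever in the complete case. Completeness together with precompactness already forces compact closure for \emph{every} precompact subset, separable or not, so for complete groups the $\SP$-property is trivially satisfied and is strictly weaker than what one obtains for free. The point of isolating separability in the definition is that it becomes meaningful only for non-complete groups, where a general precompact set need not have compact closure while separable precompact sets may still be controllable; for the present proposition, however, this distinction is irrelevant and the argument reduces entirely to the quoted fact.
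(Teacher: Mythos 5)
Your proof is correct and coincides with the paper's own argument: the paper likewise derives the proposition in one line from Fact \ref{f-Precompact}, which shows the closure of every precompact subset of a complete group is compact. Your additional observation that separability is vacuous in the complete case is accurate and consistent with the paper's presentation.
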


Recall that a (Hausdorff) topological space $Y$ is called {\it sequentially compact} if every sequence in $Y$ contains a convergent subsequence.
\begin{example} {\em
There is a sequentially compact non-compact Abelian group $H$ which has the $\SP$-property. We use the notion of $\Sigma$-product introduced by Pontryagin. Let $G:= X^\kappa$, where $X$ is a metrizable compact Abelian group and the cardinal $\kappa$ is uncountable. For $g=(x_i)_{i\in\kappa} \in G$, denote $\mathrm{supp}(g):=\{ i\in\kappa: x_i \not= 0\}$ and set
\[
H:= \{ g\in G: \; |\mathrm{supp}(g)| \leq\aleph_0 \}.
\]
Then $H$ with the induced topology is a dense subgroup of $G$. We claim that $H$  is a sequentially compact group with the $\SP$-property. Indeed,  any countable subset of $H$ is contained in a countable product $Y$ of copies of $X$. Since $Y$  is a compact and metrizable subgroup of $X$, we obtain that the group $H$ is sequentially compact  with the $\SP$-property. Note also that $H$ is Fr\'{e}chet-Urysohn by \cite{Nob}. }
\end{example}

\subsection{Some results on groups of the form $\mathfrak{F}_0(X)$}

Let $X$ be an Abelian topological group. For every $n\in\NN$, define an injective homomorphism $\nu_n: X\to X^{(\NN)}$ by
\[
 \nu_n (x)=(0,\dots,0,x,0\dots),
\]
where $x\in X$ is placed in position $n$. Define the projections $p_n: X^\NN \to X^n$ and $\pi_n :X^\mathbb{N} \to X$ by
\[
p_n (\mathbf{x})  :=(x_1,\dots, x_n), \ \mbox{ and } \  \pi_n (\mathbf{x}) := x_n , \ \forall \mathbf{x} =(x_n)_{n\in\NN} \in X^\NN.
\]
Clearly,  $p_n$ and $\pi_n$ are continuous in the uniform topology $\mathfrak{u}$. We use the same notations for the restrictions of $p_n$ and $\pi_n$ onto $c_0(X)$.

Let $\{ E_n \}_{n\in\NN}$ be a sequence of  subsets of an Abelian  topological group $X$. Following \cite{Ga7}, we say that $\{ E_n \}$ is a {\it null-sequence} in $X$ if, for every $U\in\mathcal{N}(X)$, $E_n \subseteq U$ for all sufficiently large $n\in\NN$.

\begin{lemma} \label{lNull1}
If $\{ E_n \}_{n\in\NN}$ is a null-sequence of subsets in an Abelian   topological group  $X$, then $\{ \overline{E_n} \}_{n\in\NN}$ is a null-sequence in $X$ as well.
\end{lemma}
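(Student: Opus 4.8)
The plan is to prove that if $\{E_n\}_{n\in\NN}$ is a null-sequence of subsets in an Abelian topological group $X$, then the sequence of closures $\{\overline{E_n}\}_{n\in\NN}$ is also a null-sequence. The definition requires that for every $U\in\mathcal{N}(X)$, we have $\overline{E_n}\subseteq U$ for all sufficiently large $n$. The essential point is that the closure of a set $E_n$ which is contained in a \emph{closed} neighborhood remains inside that neighborhood; so the strategy is to pass from an arbitrary open neighborhood $U$ to a closed one sitting inside it.

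First I would fix $U\in\mathcal{N}(X)$ and, using the regularity of topological groups (or directly the existence of a symmetric $V\in\mathcal{N}(X)$ with $\overline{V}\subseteq U$, which follows from taking $V$ with $V\cdot V\subseteq U$ so that $\overline{V}\subseteq V\cdot V\subseteq U$), choose a neighborhood $V\in\mathcal{N}(X)$ whose closure satisfies $\overline{V}\subseteq U$. Since $\{E_n\}$ is a null-sequence, there exists $N\in\NN$ such that $E_n\subseteq V$ for all $n\geq N$. Then for each $n\geq N$ we have $\overline{E_n}\subseteq \overline{V}\subseteq U$, using monotonicity of the closure operator. This shows $\overline{E_n}\subseteq U$ for all $n\geq N$, which is exactly what is required.

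Assembling these observations, the argument reduces to the single standard fact that in a topological group every neighborhood of the identity contains a neighborhood with closure inside it; this is the only ingredient beyond elementary properties of closures. I expect no genuine obstacle here: the proof is routine and short, and the main (minor) care is simply in selecting the closed neighborhood $\overline{V}\subseteq U$ correctly via the $V\cdot V\subseteq U$ trick, rather than grinding through anything computational.
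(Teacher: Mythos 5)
Your proof is correct and follows essentially the same route as the paper: fix $U\in\mathcal{N}(X)$, pick $V\in\mathcal{N}(X)$ with $\overline{V}\subseteq U$, use the null-sequence property to get $E_n\subseteq V$ eventually, and conclude by monotonicity of closure. The only difference is cosmetic — the paper cites \cite[4.7]{HR1} for the existence of such a $V$, while you verify it inline via the symmetric-$V$ trick ($\overline{V}\subseteq V+V\subseteq U$), which is a valid standard argument.
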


\begin{proof}
Let $U\in \mathcal{N}(X)$. Take $V\in \mathcal{N}(X)$ such that $\overline{V}\subseteq U$ \cite[4.7]{HR1}. Choose $m\in\NN$ such that $E_n \subseteq V$ for every $n\geq m$. Then $\overline{E_n} \subseteq \overline{V}\subseteq U$ for every $n\geq m$. Thus $\{ \overline{E_n} \}_{n\in\NN}$ is a null-sequence in $X$.
\end{proof}

\begin{lemma} \label{lNull2}
Let $E$ be a subset of $\mathfrak{F}_0(X)$ for an Abelian topological group $X$. If  $\{ \pi_n (E) \}_{n\in\NN}$ is not a null-sequence in $X$, then $E$ has an infinite uniformly discrete subset.
\end{lemma}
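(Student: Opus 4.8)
The plan is to exploit the failure of the null-sequence condition in order to locate an infinite family of ``peaks'' sitting at distinct coordinates, and then to separate these peaks using the fact that every point of $\mathfrak{F}_0(X)$ is itself a null sequence. First I would unravel the hypothesis: saying that $\{\pi_n(E)\}_{n\in\NN}$ is not a null-sequence means that there is a fixed $U_0\in\mathcal{N}(X)$ for which the index set $I:=\{ n\in\NN: \pi_n(E)\not\subseteq U_0\}$ is infinite. Hence for each $n\in I$ there is a point $\mathbf{x}^{(n)}\in E$ whose $n$-th coordinate $\pi_n(\mathbf{x}^{(n)})$ lies outside $U_0$. Next I would fix, by continuity of the group operations, a symmetric $W\in\mathcal{N}(X)$ with $W+W+W\subseteq U_0$, and set $\mathcal{W}:=W^\NN\cap c_0(X)$, a basic neighborhood of zero in $\mathfrak{F}_0(X)$; the goal then becomes to produce an infinite $\mathcal{W}$-separated subset of $E$.

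The heart of the argument is a diagonal induction producing indices $n_1<n_2<\cdots$ in $I$ together with points $\mathbf{y}_k:=\mathbf{x}^{(n_k)}\in E$. Having chosen $\mathbf{y}_1,\dots,\mathbf{y}_{k-1}$, I would use that each $\mathbf{y}_i\in c_0(X)$, so that all of their coordinates past some index lie in $W$: there is $N\in\NN$ with $\pi_n(\mathbf{y}_i)\in W$ for every $n\geq N$ and every $i<k$. Since $I$ is infinite I may then pick $n_k\in I$ with $n_k>\max(n_{k-1},N)$ and take the corresponding $\mathbf{y}_k$, whose $n_k$-th coordinate satisfies $\pi_{n_k}(\mathbf{y}_k)\notin U_0$. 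This guarantees simultaneously that the tails of the earlier points are small at position $n_k$, while the newly chosen point has a large peak there.

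Finally I would verify $\mathcal{W}$-separation. For $j<k$ one has $\pi_{n_k}(\mathbf{y}_j)\in W$ (smallness) and $\pi_{n_k}(\mathbf{y}_k)\notin U_0$ (peak). Since every element of $\mathcal{W}-\mathcal{W}$ has each coordinate in $W-W=W+W$, if $\mathbf{y}_j-\mathbf{y}_k$ lay in $\mathcal{W}-\mathcal{W}$ then its $n_k$-th coordinate would lie in $W+W$; writing $\pi_{n_k}(\mathbf{y}_k)=\pi_{n_k}(\mathbf{y}_j)-\bigl(\pi_{n_k}(\mathbf{y}_j)-\pi_{n_k}(\mathbf{y}_k)\bigr)$ this would force $\pi_{n_k}(\mathbf{y}_k)\in W+W+W\subseteq U_0$, a contradiction. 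Hence $\{\mathbf{y}_k\}_{k\in\NN}$ is an infinite uniformly discrete subset of $E$.

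The essential idea, and the step I expect to carry the real content, is the diagonalization: being an element of $c_0(X)$ is exactly what lets each previously selected point vanish (into $W$) at all later peak positions, which is what makes the separating coordinate available. By comparison, the delicate part is merely the bookkeeping of nested neighborhoods (choosing $W$ symmetric with $W+W+W\subseteq U_0$) needed to ensure that the distinguishing difference genuinely escapes $\mathcal{W}-\mathcal{W}$ rather than just $\mathcal{W}$; this is routine once the inductive construction is set up.
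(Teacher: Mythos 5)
Your proof is correct and follows essentially the same route as the paper's: both unravel the hypothesis into an infinite set of ``peak'' indices, inductively pick witnesses in $E$ whose previously chosen members have tails inside a small symmetric neighborhood at each new peak position (this is exactly where membership in $c_0(X)$ is used), and conclude separation by neighborhood arithmetic at the peak coordinate. The only difference is bookkeeping: your choice of $W$ with $W+W+W\subseteq U_0$ together with the explicit check that the difference escapes $\mathcal{W}-\mathcal{W}$ is, if anything, slightly more careful about the definition of $U$-separation than the paper's concluding sentence, which works with $V+V\subseteq U$ and asserts $V^\NN\cap c_0(X)$-separation directly.
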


\begin{proof}
By assumption, there exists $U\in \mathcal{N}(X)$ such that $\pi_n (E) \not\subset U$ for an infinite set of indices. Choose a symmetric $V\in \mathcal{N}(X)$ such that $V+V \subseteq U$. We shall build a uniformly discrete sequence in $E$ by induction.

Take $n_1 \in\NN$ such that $\pi_{n_1} (E) \not\subset U$, and choose $\mathbf{b}_1 =(b_n^1)_{n\in\NN} \in E$ such that $\pi_{n_1} (\mathbf{b}_1)=b_{n_1}^1 \not\in U$. Since $\mathbf{b}_1 \in c_0(X)$, there is an index $j_1, j_1 > n_1$, such that $b_i^1 \in V$ for every $i\geq j_1$.

Take $n_2 \in\NN, n_2 >j_1,$ such that $\pi_{n_2} (E) \not\subset U$, and choose $\mathbf{b}_2 =(b_n^2)_{n\in\NN} \in E$ such that $\pi_{n_2} (\mathbf{b}_2)=b_{n_2}^2 \not\in U$. Since $\mathbf{b}_2 \in c_0(X)$, there is an index $j_2, j_2 > n_2$, such that $b_i^2 \in V$ for every $i\geq j_2$.

Continuing this process we shall build a sequence $\{ \mathbf{b}_n \}_{n\in\NN}$ in $E$. If $k<m$, then $\pi_{n_m} (\mathbf{b}_m - \mathbf{b}_k)= b_{n_m}^m -b_{n_m}^k \not\in V$ since, otherwise, $b_{n_m}^m \in b_{n_m}^k +V \subset V+V\subseteq U$ that contradicts the choice of $\mathbf{b}_m$. Thus the sequence $\{ \mathbf{b}_n \}$ is $V^\NN \cap c_0(X)$-separated.
\end{proof}

The next two results play an essential role in the sequel:

\begin{fact} \label{p11} {\rm (\cite{Ga7})}
Let $X$ and $Y$ be Abelian topological groups.
\begin{enumerate}
\item[{\rm (1)}] The groups $\mathfrak{F}_0(X) \times \mathfrak{F}_0(Y)$ and $\mathfrak{F}_0 (X\times Y)$ are topologically isomorphic.
\item[{\rm (2)}] If  $H$ is a (respectively, closed or open) subgroup of $X$, then $\mathfrak{F}_0 (H)$ is a (respectively, closed or open) subgroup of $\mathfrak{F}_0 (X)$.
\item[{\rm (3)}] A closed subset $K$ of $\mathfrak{F}_0(X)$ is compact if and only if the sequence $\{ \pi_n (K) \}_{n\in\NN}$ is a null-sequence of compact subsets of $X$. Moreover, if $K$ is  compact, then the product $\prod_{n\in\NN} \pi_n (K)$ is a compact subset of $\mathfrak{F}_0(X)$.
\end{enumerate}
\end{fact}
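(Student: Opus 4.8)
The plan is to treat the three items separately, with almost all of the effort concentrated in item (3).

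For item (1), I would write down the obvious algebraic isomorphism $\Phi\colon \mathfrak{F}_0(X)\times\mathfrak{F}_0(Y)\to\mathfrak{F}_0(X\times Y)$ sending $((x_n),(y_n))$ to the sequence $((x_n,y_n))$. Because the product topology on $X\times Y$ makes $(x_n,y_n)\to 0$ precisely when $x_n\to 0$ and $y_n\to 0$, this $\Phi$ is a well-defined bijection between the underlying groups. To see it is a homeomorphism it suffices to match bases at zero: a base at $0$ in the source consists of the sets $(V^\NN\cap c_0(X))\times(W^\NN\cap c_0(Y))$ with $V\in\mathcal{N}(X)$, $W\in\mathcal{N}(Y)$, and $\Phi$ carries such a set onto $(V\times W)^\NN\cap c_0(X\times Y)$. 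Since the sets $V\times W$ form a base at $0$ in $X\times Y$, these are exactly the basic uniform neighbourhoods of $\mathfrak{F}_0(X\times Y)$, so $\Phi$ is a topological isomorphism.

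For item (2), I would first note that $c_0(H)\subseteq c_0(X)$ and that, because $H$ carries the subspace topology, a basic neighbourhood $(V\cap H)^\NN\cap c_0(H)$ of $\mathfrak{F}_0(H)$ equals $V^\NN\cap c_0(H)$; hence the topology of $\mathfrak{F}_0(H)$ is exactly the one induced from $\mathfrak{F}_0(X)$, so $\mathfrak{F}_0(H)$ is a topological subgroup. If $H$ is closed in $X$, then a null sequence with all entries in $H$ is automatically a null sequence of $H$, so inside $c_0(X)$ we have $c_0(H)=\bigcap_{n}\pi_n^{-1}(H)$; as each $\pi_n$ is $\mathfrak{u}$-continuous and $H$ is closed, this exhibits $\mathfrak{F}_0(H)$ as an intersection of closed sets. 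If $H$ is open, then $H\in\mathcal{N}(X)$ and $c_0(H)=H^\NN\cap c_0(X)$, which is a basic open set of $\mathfrak{F}_0(X)$.

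For the forward direction of item (3), each $\pi_n(K)$ is compact as the continuous image of the compact set $K$. To show $\{\pi_n(K)\}$ is a null-sequence I would argue by contraposition through Lemma \ref{lNull2}: if $\{\pi_n(K)\}$ were not a null-sequence, then $K$ would contain an infinite uniformly discrete subset $A$. By Lemma \ref{l-UD} such an $A$ is closed in $\mathfrak{F}_0(X)$, and it is discrete in the subspace topology (for distinct $a,b\in A$ the neighbourhood $a+U$ of $a$ meets $A$ only in $\{a\}$). A closed subset of the compact set $K$ is compact, and a compact discrete space is finite, contradicting the infinitude of $A$. Hence $\{\pi_n(K)\}$ is a null-sequence of compact sets.

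The reverse direction and the ``moreover'' clause carry the main difficulty, and I would handle them together. Write $K_n:=\pi_n(K)$ and $P:=\prod_{n}K_n$. Since $\{K_n\}$ is a null-sequence every element of $P$ is a null sequence, so $P\subseteq c_0(X)$, and clearly $K\subseteq P$. The hard point is that $P$ is compact \emph{in the uniform topology} $\mathfrak{u}_0$, which is strictly finer than the product topology in which $P$ is compact by Tychonoff's theorem. The key observation — and exactly where the null-sequence hypothesis does the work — is that $\mathfrak{u}_0$ and the product topology \emph{coincide on} $P$. To prove this I would fix $\mathbf{a}=(a_n)\in P$ and a basic uniform neighbourhood $\mathbf{a}+V^\NN$, choose a symmetric $W\in\mathcal{N}(X)$ with $W+W\subseteq V$, and use the null-sequence property to pick $N$ with $K_n\subseteq W$ for all $n>N$; then the product-open set $\{\mathbf{x}\in P: x_n\in a_n+W \text{ for } n\le N\}$ lies inside $\mathbf{a}+V^\NN$, since for $n\le N$ one has $x_n-a_n\in W\subseteq V$, while for $n>N$ both $x_n$ and $a_n$ lie in $W$, giving $x_n-a_n\in W+W\subseteq V$. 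Thus $\mathfrak{u}_0|_P$ is coarser than, hence equal to, the product topology on $P$, so $P$ is $\mathfrak{u}_0$-compact, which is the ``moreover'' clause. Finally, as $K$ is closed in $\mathfrak{F}_0(X)$ and $K\subseteq P$, it is closed in the compact set $P$ and therefore compact. The main obstacle is precisely this comparison of topologies on $P$; everything else is routine once one recognizes that the null-sequence condition is exactly what tames the tails uniformly.
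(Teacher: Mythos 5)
Your proof is correct. One caveat on the comparison itself: the paper contains no proof of Fact \ref{p11} --- it is imported from \cite{Ga7} as a known result --- so there is no in-paper argument to check you against, and your proposal has to be judged on its own merits. Judged so, it is sound, and it runs on exactly the machinery the paper builds around this Fact: your contrapositive proof of the forward direction of (3), via Lemma \ref{lNull2} (an infinite uniformly discrete subset) plus Lemma \ref{l-UD} (such a set is closed; it is also discrete since $a+U$ meets it only in $\{a\}$, hence finite inside a compact set), is the same pattern the author uses when proving Theorem \ref{tTopF}(i), where Fact \ref{p11}(3) is invoked as a black box. The crux is, as you say, the reverse direction together with the ``moreover'' clause: showing that $\mathfrak{u}_0$ and the product topology coincide on $P=\prod_{n}\pi_n(K)$, and your $W+W\subseteq V$ tail estimate (coordinates beyond $N$ lie in $K_n\subseteq W$, so differences land in $W+W\subseteq V$) does this correctly; Tychonoff then gives $\mathfrak{u}_0$-compactness of $P$, and $K$ is compact as a closed subset of $P$. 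Two small remarks. First, in item (2) the identity $c_0(H)=H^\NN\cap c_0(X)$ holds for \emph{every} subgroup $H$, closed or not (a sequence in $H$ converging to $0$ in $X$ converges to $0$ in the subspace topology because the limit lies in $H$), so your description $c_0(H)=\bigcap_n \pi_n^{-1}(H)\cap c_0(X)$ is available verbatim and closedness is immediate from continuity of the $\pi_n$. Second, the forward direction of (3) also admits a short direct proof avoiding the lemmas: cover $K$ by finitely many translates $\mathbf{a}^i+\left(W^\NN\cap c_0(X)\right)$ with $W$ symmetric and $W+W\subseteq U$, and note that the finitely many $\mathbf{a}^i\in c_0(X)$ have all coordinates in $W$ from some index on, whence $\pi_n(K)\subseteq W+W\subseteq U$ for all large $n$; your route through Lemmas \ref{lNull2} and \ref{l-UD} is equally clean and arguably better aligned with how the paper deploys those lemmas elsewhere.
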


\begin{fact} {\rm (\cite{Ga8})} \label{f3}
Let $X$ be a LCA group and $X\cong \mathbb{R}^n \times X_0$, where $n\in\omega$ and $X_0$ has an open compact subgroup. Then
\begin{enumerate}
\item[{\rm (1)}]  $\mathfrak{F}_0 (X)\cong c_0^n \times \mathfrak{F}_0 (X_0)$ and $\mathfrak{F}_0 (X)^\wedge \cong \ell_1^n \times \mathfrak{F}_0 (X_0)^\wedge$.
\item[{\rm (2)}] $\widehat{\mathfrak{F}_0 (X_0)} \subseteq \widehat{X}^\NN_0$. Further,  $\mathbf{g} =(g_n)_{n\in\NN}\in \mathfrak{F}_0 (X_0)^\wedge$ if and only if  there exists an open subgroup $H$ of $X_0$ and a natural number $m$ such that $g_n \in H^\perp$ for every $n\geq m$. In this case we have
    \[
    (\mathbf{g} ,\xxx)= \lim_{n\to\infty} \prod_{i=1}^n (g_i, x_i) , \; \forall \xxx=(x_n)_{n\in\NN} \in \mathfrak{F}_0(X).
    \]
\end{enumerate}
\end{fact}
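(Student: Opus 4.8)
The plan is to treat the two parts separately: part (1) reduces to finite multiplicativity together with the base case $\mathfrak{F}_0(\mathbb{R})=c_0$, while part (2) splits into a density argument giving the inclusion and the pairing formula, an easy sufficiency, and a more delicate necessity. For part (1), since $\mathfrak{F}_0$ is finitely multiplicative (Fact \ref{p11}(1)), the isomorphism $X\cong \mathbb{R}^n\times X_0$ gives $\mathfrak{F}_0(X)\cong \mathfrak{F}_0(\mathbb{R})^n\times \mathfrak{F}_0(X_0)$. As recalled in the introduction $\mathfrak{F}_0(\mathbb{R})$ is the Banach space $c_0$, so $\mathfrak{F}_0(X)\cong c_0^n\times\mathfrak{F}_0(X_0)$. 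Dualizing a finite product yields the product of the duals, and the continuous characters of the additive group of a Banach space $E$ are exactly the maps $x\mapsto \exp(2\pi i f(x))$ with $f\in E'$; for $E=c_0$ this gives $c_0^\wedge\cong \ell_1$ by the classical duality $c_0'=\ell_1$. Hence $\mathfrak{F}_0(X)^\wedge\cong \ell_1^n\times\mathfrak{F}_0(X_0)^\wedge$, so once the base case is in hand part (1) is bookkeeping.

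For the inclusion and the pairing formula in part (2): each $\nu_n:X_0\to c_0(X_0)$ is continuous (indeed $\nu_n^{-1}(V^\NN)=V$), so any $\chi\in\widehat{\mathfrak{F}_0(X_0)}$ produces $g_n:=\chi\circ\nu_n\in\widehat{X_0}$. The truncations $(x_1,\dots,x_N,0,0,\dots)$ of a null sequence $\xxx$ converge to $\xxx$ in $\mathfrak{u}_0$ (because $x_n\to 0$), so $X_0^{(\NN)}$ is dense in $\mathfrak{F}_0(X_0)$; thus $\chi$ is determined by $(g_n)$, which gives the injection $\widehat{\mathfrak{F}_0(X_0)}\hookrightarrow \widehat{X_0}^\NN$, and by continuity $(\mathbf g,\xxx)=\lim_N\prod_{i=1}^N (g_i,x_i)$.

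For sufficiency, assume $g_n\in H^\perp$ for all $n\ge m$ with $H$ open. For $\xxx\in c_0(X_0)$ one has $x_n\in H$ for all large $n$ (since $H\in\mathcal{N}(X_0)$), hence $(g_n,x_n)=1$ eventually, the partial products stabilize, and the limit defines a homomorphism $\chi$. On the open neighborhood $H^\NN\cap c_0(X_0)$ we get $\chi(\xxx)=\prod_{i=1}^{m-1}(g_i,x_i)$, which factors through $p_{m-1}$ and is therefore continuous; as a homomorphism continuous at $0$ is continuous, $\chi\in\widehat{\mathfrak{F}_0(X_0)}$.

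The hard part is necessity. Fix a compact open subgroup $K\le X_0$. Continuity of $\chi$ gives a symmetric $V\in\mathcal{N}(X_0)$ with $V\subseteq K$ and $\chi(V^\NN\cap c_0(X_0))\subseteq \TT_+$; applying this to $\nu_n(V)$ shows $(g_n,V)\subseteq \TT_+$ for every $n$, so all $g_n$ lie in the set $S:=\{\gamma\in\widehat{X_0}:(\gamma,V)\subseteq \TT_+\}$, which is equicontinuous and hence relatively compact in $X_0^\wedge$ by Ascoli's theorem. I would then reformulate the target: since $X_0/K$ is discrete, $K^\perp=\widehat{X_0/K}$ is a compact open subgroup of $\widehat{X_0}$, and by Pontryagin duality the annihilators of open subgroups of $X_0$ are exactly the compact subgroups of $\widehat{X_0}$; thus the desired conclusion is precisely that a tail of $(g_n)$ lies in a single compact subgroup. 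The remaining and main step is to deduce this from convergence of the products in the formula above: if no tail of $(g_n)$ were contained in a compact subgroup, I would use the relative compactness just established to extract, along a subsequence $n_j$, elements $x_{n_j}\to 0$ in $X_0$ such that the factors $(g_{n_j},x_{n_j})$ remain nontrivial and accumulate (their arguments summing to a divergent series), and set the other coordinates to $0$, producing $\xxx\in c_0(X_0)$ for which the partial products $\prod_{i=1}^N (g_i,x_i)$ are not Cauchy in $\TT$ --- contradicting the formula. Turning ``$(g_n)$ escapes every compact subgroup of $\widehat{X_0}$'' into a concrete divergent twisted product built only from null entries, using the compact-open-subgroup structure of $X_0$ and its dual, is the crux of the whole argument.
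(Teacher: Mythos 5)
You should first note that the paper itself contains no proof of this statement: Fact \ref{f3} is quoted from \cite{Ga8} (``Comment.\ Math.\ Univ.\ Carolin.\ to appear''), so there is no internal argument to compare against, and your attempt has to stand on its own. On its own terms, most of it does. Part (1) is correct and complete: finite multiplicativity (Fact \ref{p11}(1)) plus $\mathfrak{F}_0(\mathbb{R})=c_0$, duality of finite products, and the standard identification of the characters of a Banach space with its dual via $f\mapsto e^{2\pi i f(\cdot)}$ give exactly the claimed isomorphisms. Likewise the inclusion $\widehat{\mathfrak{F}_0(X_0)}\hookrightarrow \widehat{X_0}^{\NN}$ and the limit formula (continuity of $\nu_n$, density of $X_0^{(\NN)}$ via truncations) and the sufficiency direction (the partial products stabilize on the open subgroup $H^\NN\cap c_0(X_0)$, where $\chi$ factors through $p_{m-1}$) are correctly and fully argued. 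Your reformulation of the target via the annihilator duality ``$H$ open in $X_0$ $\Leftrightarrow$ $H^\perp$ compact in $X_0^\wedge$'' is also correct.

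The genuine gap is the one you flag yourself: the necessity direction is a sketch, and the step you defer is the entire difficulty. Two concrete points. First, the relative compactness of the equicontinuous set $S=\{\gamma: (\gamma,V)\subseteq \TT_+\}$ cannot by itself do the work: equicontinuity is uniform in $n$, so for \emph{any} null sequence one automatically has $(g_{n_j},x_{n_j})\to 1$; thus your extracted factors cannot ``remain nontrivial'' in the sense of being bounded away from $1$, and the whole burden falls on producing arguments $\theta_j\to 0$ with $\sum_j\theta_j=\infty$, which requires a quantitative lower bound of the form: if $g\notin H^\perp$ for all small open $H$, then every small neighborhood contains an $x$ with $\arg(g,x)$ at least a prescribed size. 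Second, the dichotomy is genuinely concentrated on the connected component of $K$: when $V$ can be taken to be a \emph{subgroup} (e.g.\ $X_0$ totally disconnected), $(g_n,V)$ is a subgroup of $\TT$ contained in $\TT_+$, hence trivial, so $g_n\in V^\perp$ for \emph{all} $n$ and necessity is immediate with no extraction at all. The model for the remaining case is $X_0=\TT$: there $(g_n,V)\subseteq\TT_+$ with $V=(-\varepsilon,\varepsilon)$ only bounds $|g_n|\le 1/(4\varepsilon)$, and one must show a bounded integer sequence that is not eventually zero admits $\theta_j\to 0$ with $|g_{n_j}|\theta_j=1/j$, making the partial products non-Cauchy (increments tend to $0$ while the argument sums diverge). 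Extending this construction from $\TT$ to an arbitrary compact open $K$ --- splitting the behavior of $g_n$ along the connected component of $K$ from its behavior on the totally disconnected quotient --- is precisely the missing argument, so as written your proposal proves everything in the Fact except the heart of part (2).
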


\section{Topological properties which are preserved under the functor $\mathfrak{F}_0$} \label{secTop}

Let $X$ be a topological space. Recall that a subset $A$ of  $X$ is called
\begin{enumerate}
\item[--] {\it relatively compact} if its closure ${\bar A}$ is compact;
\item[--] {\it relatively countably compact} if each countably infinite subset in $A$ has a cluster point in $X$;
\item[--] {\it relatively sequentially compact} if each sequence in $A$ has a subsequence converging to a point of $X$;
\item[--] {\it countably compact} or, respectively, {\it sequentially compact } if in the above two definitions the cluster point or, respectively, the limit point is required to be in $A$.
\end{enumerate}
Clearly, compact and sequentially compact subsets are countably compact. On the other hand, there are sequentially compact subsets of a completely regular Hausdorff space which are nonclosed, and their closure are not countably compact (see Examples 28 and 29 in \cite{BMPT}). So topological spaces, in which any two of these in general different topological properties  coincide,  are of independent interest. Let us recall some of them.

A  Hausdorff topological space $X$ is called
\begin{enumerate}
\item[(1)] an {\it  $(E)$-space} if its relatively countably compact subsets are relatively compact (see \cite[Exercise 1, p. 209]{Gro});
\item[(2)] a {\it \v{S}-space} if its compact subsets are sequentially compact;
\item[(3)] an {\it angelic space} if for every relatively countably compact subset $A$ of $X$ the following two claims hold:
    \begin{enumerate}
    \item[(i)] $A$ is relatively compact, and
    \item[(ii)] if $x\in {\bar A}$, then there is a sequence in $A$ which converges to $x$;
     \end{enumerate}
\item[(4)] a  {\it strictly angelic space} if it is angelic and each its separable compact subspace is first countable (\cite{Gov}).
\end{enumerate}
It is easy to see that all  classes of  spaces (1)-(4) are closed under taking closed subspaces, i.e., they are of $\mathcal{P}_s$-type. Note that the product of two countably compact spaces may not be countably compact \cite[3.10.19]{Eng}. On the other hand, the countable product of (sequentially) compact spaces is (sequentially) compact (see \cite[3.10.35]{Eng}). Also the class of strictly angelic spaces is countably productive \cite{Gov}. This explains why Theorem \ref{tTopF} is valid in the case when countable compactness coincides with (sequential) compactness. Clearly, every (strictly) angelic space is an $(E)$-space.

Now we are in position to prove Theorem \ref{tTopF} (we use notations from Section 2.3).

{\large Proof of Theorem  {\rm \ref{tTopF}}}.
Since all the classes of  groups (i)-(iii) in the theorem are closed under taking closed subgroups, we need to prove only the necessity in (i)-(iii).

(i) Assume that $X$ is an $(E)$-space. We have to show that $\mathfrak{F}_0(X)$ is also an $(E)$-space. To end this we have to prove that the closure ${\bar A}$ of  every relatively countably compact subset $A$ of $\mathfrak{F}_0(X)$ is compact.

For every $n\in\NN$, set $A_n := \pi_n (A)$. We claim that $A_n$ is relatively countably compact in $X$. Indeed, for every   countably infinite subset $\{ a^n_k\}_{k\in\omega} $ in $A_n$ take arbitrarily its preimage $\{ \mathbf{a}_k\}_{k\in\omega} $ in $A$, i.e., $\pi_n (\mathbf{a}_k)=a^n_k$ for every $k\in\omega$.  By definition, $\{ \mathbf{a}_k\}_{k\in\omega} $ has a cluster point $\mathbf{a}$ in $\mathfrak{F}_0(X)$. Clearly,  $\pi_n (\mathbf{a})$ is a cluster point of $A_n$.

Since $X$ is an $(E)$-space, we obtain that the closure $\overline{A_n}$ of $A_n$  is compact in $X$ for every $n\in\NN$.

Let us show that the sequence $\{ A_n\}_{n\in\omega}$ is a null sequence in $X$. Indeed, otherwise $A$ has a uniformly discrete sequence $\{ \mathbf{b}_k\}_{k\in\NN} $ by Lemma \ref{lNull2}. Since the set $\{ \mathbf{b}_k\}$ is closed  in $\mathfrak{F}_0(X)$ by Lemma \ref{l-UD}, it does not have cluster points.  Thus $A$ is not relatively countably compact, a  contradiction.

Lemma \ref{lNull1} implies that $\{ \overline{A_n}\}_{n\in\NN}$ is a null sequence of compact subsets of $X$. Set $K:= \prod_{n\in\NN} \overline{A_n}$. Fact \ref{p11}(3) yields that $K$ is a compact subset of $\mathfrak{F}_0(X)$. Since $A\subseteq K$ we obtain that ${\bar A}$ is compact in $\mathfrak{F}_0(X)$. Thus $\mathfrak{F}_0(X)$ is an $(E)$-space.

(ii)  Assume that $X$ is a strictly angelic space. We have to show that $\mathfrak{F}_0(X)$ is  strictly angelic as well.

Let us show first that every separable compact subset $K$ of $\mathfrak{F}_0(X)$ is first countable. For every $n\in\NN$, set $K_n := \pi_n (K)$ and put $K' := \prod_{n\in\NN} K_n$. By Fact \ref{p11}(3), $K'$ is a compact subset of  $\mathfrak{F}_0(X)$. Since $X$ is strictly  angelic, $K_n$ is first countable for every $n\in\NN$. Thus $K'$ and its compact subset $K$ are first countable (see \cite[2.3.14]{Eng}).

Let $B$ be a  relatively countably compact subset of $\mathfrak{F}_0(X)$. By item (i), $B$ is relatively compact in $\mathfrak{F}_0(X)$. So, to prove  that $\mathfrak{F}_0(X)$ is a strictly angelic space,  it is remained to show  that for  every $\mathbf{x}=(x_n)_{n\in\NN} \in {\bar B}$ there exists a sequence in $B$ converging to $\mathbf{x}$.

For every $n\in \NN$, set $C_n := \pi_n ({\bar B})$.  Then $\{ C_n\}_{n\in\NN}$ is a null sequence of compact subsets of $X$ by Fact \ref{p11}(3).

For every $n\in \NN$, set  $B_n :=p_n (B)$. As  $B$ is relatively compact in $\mathfrak{F}_0(X)$, the set $B_n$ is relatively compact in $X^n$ and $p_n(\xxx)\in \overline{B_n}$. Since $B_n$ is relatively countably compact and $X^n$ is strictly angelic \cite{Gov}, for every $n\in \NN$ there is a sequence $\{ x_{n,k} \}_{k\in\NN}$ in $B_n$ converging to $p_n(\xxx)$. Take arbitrarily a sequence $\{ \xxx_{n,k} \}_{k\in\NN}$ in $B$ such that $p_n(\xxx_{n,k})=x_{n,k}$. Set $S:= \{ \xxx_{n,k} \}_{n,k\in\NN}$ and $Z := \mathrm{cl}_{\mathfrak{F}_0(X)} (S)$. Then $S\subseteq B$ and  $Z$ is a separable compact subset of ${\bar B}$. As we proved above, $Z$ is first countable.  Thus, to show the existence of a sequence in $B$  converging to $\xxx$ it is enough to prove that $\xxx\in Z$.

Fix arbitrarily $U\in \mathcal{N}(X)$. Take a symmetric $V\in \mathcal{N}(X)$ such that $V+V\subseteq U$. Choose $m\in\NN$ such that $C_n \subset V$ for every $n> m$. Then $\pi_n(\xxx)$ and $\pi_n(\xxx_{l,k})$ belong to $V$ for every $n>m$ and each $l,k\in\NN$. So $\pi_n(\xxx_{l,k}) - \pi_n(\xxx) \in V+V\subseteq U$  for every $n>m$ and each $l,k\in\NN$. Take $k_0 \in\NN$ such that $p_m(\xxx_{m,k_0}) - p_m(\xxx) \in U^n$. Clearly, $\xxx_{m,k_0} - \xxx \in U^\NN \cap c_0(X)$. Thus $\xxx\in Z$.

(iii)  Assume that $X$ is a \v{S}-space and let $K$ be a compact subset of $\mathfrak{F}_0(X)$. We have to prove that $K$ is sequentially compact.

For every $n\in\NN$, set $K_n := \pi_n (K)$. Then $K_n$ is compact in $X$. By assumption, $K_n$ is sequentially compact for every $n\in\NN$. Hence $K' := \prod_{n\in\NN} K_n$  is  sequentially compact by \cite[3.10.35]{Eng}. Note that $K'$ is a compact subset of $\mathfrak{F}_0(X)$ by Fact \ref{p11}(3), and  $K\subseteq K'$. Thus $K$ is  sequentially compact by \cite[3.10.33]{Eng}. Therefore $\mathfrak{F}_0(X)$ is a  \v{S}-space.
$\Box$


Recall that a topological group $X$ is said to have a {\it subgroup topology} if it has a base at the identity consisting subgroups.

\begin{proposition} \label{pSubT}
Let $X$ be an Abelian topological group with  subgroup topology. Then
\begin{itemize}
\item[{\rm (i)}]  $\mathfrak{F}_0(X)$ has a subgroup  topology.
\item[{\rm (ii)}] $\mathfrak{F}_0(X)$ embeds into a product of discrete Abelian groups. In particular, $\mathfrak{F}_0(X)$ is nuclear.
\end{itemize}
\end{proposition}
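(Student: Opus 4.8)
The plan is to prove both items directly from the structure of the uniform topology, exploiting the hypothesis that $X$ has a base $\mathcal{B}$ at $0$ consisting of open subgroups. First I would observe that for every subgroup $V \leq X$, the set $V^\NN \cap c_0(X)$ is itself a subgroup of $\mathfrak{F}_0(X)$: it is closed under the group operation because $V$ is, and it is a neighborhood of $0$ in the uniform topology $\mathfrak{u}_0$ by the very definition of $\mathfrak{u}_0$ (its base consists of sets $V^\NN \cap c_0(X)$ with $V \in \mathcal{N}(X)$). Therefore the collection $\{ V^\NN \cap c_0(X) : V \in \mathcal{B}\}$ is a base at $0$ for $\mathfrak{F}_0(X)$ consisting of subgroups, which is precisely the assertion of item (i).

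For item (ii), the idea is to realize $\mathfrak{F}_0(X)$ as a subgroup of a product of the quotient groups $\mathfrak{F}_0(X)/\bigl(V^\NN \cap c_0(X)\bigr)$, each of which I claim is discrete. Concretely, for each $V \in \mathcal{B}$ let $q_V : \mathfrak{F}_0(X) \to \mathfrak{F}_0(X)/(V^\NN \cap c_0(X))$ be the quotient map, and consider the diagonal homomorphism
\[
\Phi : \mathfrak{F}_0(X) \longrightarrow \prod_{V \in \mathcal{B}} \mathfrak{F}_0(X)/\bigl(V^\NN \cap c_0(X)\bigr), \qquad \Phi(\xxx) := \bigl(q_V(\xxx)\bigr)_{V \in \mathcal{B}}.
\]
Since $V^\NN \cap c_0(X)$ is an \emph{open} subgroup of $\mathfrak{F}_0(X)$ (each $V$ being open in $X$), the quotient by it is a discrete group, so the target is a product of discrete Abelian groups. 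The map $\Phi$ is injective because $X$ is Hausdorff, so $\bigcap_{V \in \mathcal{B}} V = \{0\}$ and hence $\bigcap_{V \in \mathcal{B}} \bigl(V^\NN \cap c_0(X)\bigr) = \{0\}$; thus $\ker \Phi$ is trivial. Each $q_V$ is continuous, so $\Phi$ is continuous, and $\Phi$ is open onto its image because the base sets $V^\NN \cap c_0(X)$ are precisely the kernels $\ker q_V$, so $\Phi\bigl(V^\NN \cap c_0(X)\bigr)$ coincides with the trace on $\Phi(\mathfrak{F}_0(X))$ of the corresponding basic open subgroup of the product. Hence $\Phi$ is a topological embedding.

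The final clause, nuclearity, then follows from the embedding together with two standard permanence facts: every discrete Abelian group is nuclear, and the class of nuclear groups is closed under arbitrary products and under taking subgroups. I expect the only delicate point to be the verification that $\Phi$ is \emph{open onto its image} rather than merely continuous and injective; the crucial observation making this routine is that the basic neighborhoods $V^\NN \cap c_0(X)$ are themselves the kernels of the quotient maps $q_V$, so there is no gap between the topology generated by $\Phi$ and the uniform topology $\mathfrak{u}_0$. Everything else reduces to the remark that openness of $V$ in $X$ forces both the subgroup property of the base and the discreteness of the quotients.
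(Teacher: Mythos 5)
Your proposal is correct and takes essentially the same approach as the paper: item (i) is verbatim the paper's argument, and for item (ii) the paper simply cites Proposition 2.2 of \cite{AG} for the embedding into a product of discrete Abelian groups and then invokes \cite[7.5, 7.6 and 7.10]{Ban} for nuclearity. Your diagonal homomorphism into the product of the discrete quotients $\mathfrak{F}_0(X)/\bigl(V^\NN \cap c_0(X)\bigr)$, with openness onto the image coming from the fact that the basic neighborhoods are exactly the kernels $\ker q_V$, is precisely the standard proof of that cited embedding result, so you have merely unpacked the citation rather than followed a different route.
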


\begin{proof}
(i) Let $\mathcal{B}$ be a base at zero consisting subgroups. Then $\{ U^\NN \cap c_0(X) : \ U\in \mathcal{B}\}$ is a base at zero in $\mathfrak{F}_0(X)$ consisting subgroups. Thus $\mathfrak{F}_0(X)$ has a subgroup topology.

(ii) By item (i), $\mathfrak{F}_0(X)$ has a subgroup topology. Hence $\mathfrak{F}_0(X)$ embeds  into a product of discrete Abelian groups (see, for example, Proposition 2.2 of \cite{AG}). By \cite[7.5, 7.6 and 7.10]{Ban},  $\mathfrak{F}_0(X)$ is nuclear.
\end{proof}


Let $X$ be an Abelian topological group. For a subset $A$ of $X$, we denote by $A^\triangleright$ the set $\{\chi\in X^\wedge:\ \chi(A)\subseteq \TT_+\}$.
 A subset $A$ of $X$ is called {\it quasi-convex} if for every $x\in X\setminus A$ there exists
  $\chi\in A^\triangleright$ which satisfies  $\chi(x)\notin \TT_+$.
An Abelian topological group is called {\it locally quasi-convex} if it admits a neighborhood base at the neutral element
$0$ consisting of quasi-convex sets.

The next proposition is an immediate corollary of Fact \ref{fTopPropF}(i) and \cite[Theorem 14]{BMPT}.
\begin{proposition}
If $X$ is a locally quasi-convex Abelian group which admits a coarser metrizable group topology, then  $\mathfrak{F}_0(X)^+$ is strictly angelic.
\end{proposition}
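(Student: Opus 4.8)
The plan is to deduce the statement from \cite[Theorem 14]{BMPT} by verifying its two hypotheses for the group $\mathfrak{F}_0(X)$: that it is locally quasi-convex and that it admits a coarser metrizable group topology. The first hypothesis I would obtain immediately from Fact \ref{fTopPropF}(i): since $X$ is locally quasi-convex, so is $\mathfrak{F}_0(X)$.

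For the second hypothesis I would argue as follows. Let $\tau$ be the given topology on $X$ and let $\tau'\subseteq\tau$ be a coarser metrizable group topology. Since $\tau'$ is coarser than $\tau$, every $\tau$-null sequence is $\tau'$-null, whence $c_0(X,\tau)\subseteq c_0(X,\tau')$ and the identity map $j:\mathfrak{F}_0(X,\tau)\to \mathfrak{F}_0(X,\tau')$ is a continuous injective homomorphism: for $W\in\mathcal{N}(X,\tau')\subseteq\mathcal{N}(X,\tau)$ the preimage $j^{-1}\big(W^\NN\cap c_0(X,\tau')\big)=W^\NN\cap c_0(X,\tau)$ is a basic $\mathfrak{u}_0$-neighbourhood of $0$. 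As $(X,\tau')$ is metrizable, $\mathfrak{F}_0(X,\tau')$ is metrizable by Fact \ref{fTopPropF}(i), and pulling its topology back along $j$ equips $\mathfrak{F}_0(X)$ with a coarser metrizable group topology, whose basic $0$-neighbourhoods $W^\NN\cap c_0(X,\tau)$ (with $W\in\mathcal{N}(X,\tau')$) form a subfamily of the base of $\mathfrak{u}_0$.

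Having verified both hypotheses, I would simply invoke \cite[Theorem 14]{BMPT} to conclude that $\mathfrak{F}_0(X)^+$ is strictly angelic. The only slightly delicate point is the chain $c_0(X,\tau)\subseteq c_0(X,\tau')$ together with the fact that the pulled-back topology is genuinely coarser than $\mathfrak{u}_0$; both follow routinely from the elementary observation that a coarser group topology has fewer $0$-neighbourhoods and carries null sequences to null sequences. I do not expect any real obstacle here, which is consistent with the statement being phrased as an immediate corollary.
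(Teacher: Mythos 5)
Your proof is correct and follows essentially the same route as the paper, which presents the proposition as an immediate corollary of Fact \ref{fTopPropF}(i) and \cite[Theorem 14]{BMPT}. Your careful verification that the identity map $j:\mathfrak{F}_0(X,\tau)\to\mathfrak{F}_0(X,\tau')$ is a continuous injection (using $\mathcal{N}(X,\tau')\subseteq\mathcal{N}(X,\tau)$ and $c_0(X,\tau)\subseteq c_0(X,\tau')$), so that pulling back the metrizable topology of $\mathfrak{F}_0(X,\tau')$ yields the required coarser metrizable group topology, is exactly the routine detail the paper leaves implicit.
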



Denote by $\mathbf{TVS}$ (respectively, $\mathbf{LCS}$) the subcategory of $\mathbf{TAG}$ consisting of all real topological vector space (TVS for short) (respectively, real locally convex spaces, LCS for short). The following proposition shows that $\mathbf{TVS}$ and $\mathbf{LCS}$ are $\mathfrak{F}_0$-invariant. For a subset $A$  of  a TVS $L$ and an $\alpha\in\mathbb{R}$ we set $\alpha A:=\{ \alpha a \in L : a\in A\}$.
\begin{proposition} \label{p-TVS}
{\rm (i)} If $L$ is a real (respectively, complex) TVS, then  $\mathfrak{F}_0 (L)$ is a  real  (respectively, complex)  TVS as well.

{\rm (ii)} If $L$ is a real LCS, then  $\mathfrak{F}_0 (L)$ is also a real LCS.
\end{proposition}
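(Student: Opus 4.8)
The group $\mathfrak{F}_0(L)=(c_0(L),\mathfrak{u}_0)$ is already a Hausdorff Abelian topological group because $\mathfrak{F}_0$ is a functor into $\mathbf{TAG}$; in particular addition is continuous, so I get that for free. Moreover $c_0(L)$ is a linear subspace of $L^\NN$: if $\mathbf{x}=(x_n)\in c_0(L)$ and $\alpha$ is a scalar then $\alpha x_n\to 0$ since $y\mapsto\alpha y$ is continuous on $L$, and sums of null sequences are null. So the only thing left to verify for (i) is that the scalar multiplication map $(\alpha,\mathbf{x})\mapsto\alpha\mathbf{x}$ is jointly continuous for the topology $\mathfrak{u}_0$, whose base at $0$ is $\{V^\NN\cap c_0(L):V\in\mathcal{N}(L)\}$.

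To prove this continuity I would fix $(\alpha_0,\mathbf{x}_0)$ with $\mathbf{x}_0=(x_n^0)$ and a basic neighborhood $V^\NN\cap c_0(L)$, pick a balanced $V'\in\mathcal{N}(L)$ with $V'+V'\subseteq V$, and use the coordinatewise decomposition $\alpha\mathbf{x}-\alpha_0\mathbf{x}_0=\alpha(\mathbf{x}-\mathbf{x}_0)+(\alpha-\alpha_0)\mathbf{x}_0$. For the first summand I would invoke the standard fact that in a TVS scalars of bounded modulus act equicontinuously: given the bound $M:=|\alpha_0|+1$ and $V'$, choosing a balanced $U\subseteq V'$ and $W:=M^{-1}U\in\mathcal{N}(L)$ gives $\lambda W\subseteq U\subseteq V'$ for all $|\lambda|\le M$ (write $\lambda=M\mu$ with $|\mu|\le 1$ and use that $U$ is balanced). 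Hence if $\mathbf{x}-\mathbf{x}_0\in W^\NN$ and $|\alpha-\alpha_0|\le 1$, then $\alpha(x_n-x_n^0)\in V'$ for every $n$ simultaneously.

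The crux, and the step where the null-sequence hypothesis is genuinely used, is the uniform control of the second summand $(\alpha-\alpha_0)x_n^0$ over all $n$. Here I would split the index set: since $V'$ is a neighborhood of $0$ and $x_n^0\to 0$, there is $N$ with $x_n^0\in V'$ for $n>N$, so for these tail indices $|\alpha-\alpha_0|\le 1$ together with balancedness of $V'$ already gives $(\alpha-\alpha_0)x_n^0\in V'$. For the finitely many head indices $n\le N$ each $x_n^0$ is a fixed vector, so by continuity of $\beta\mapsto\beta x_n^0$ at $\beta=0$ there is $\delta_n>0$ with $|\beta|\le\delta_n\Rightarrow\beta x_n^0\in V'$; setting $\delta:=\min\{1,\delta_1,\dots,\delta_N\}$ handles all $n$ at once. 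Combining the two summands, for $|\alpha-\alpha_0|\le\delta$ and $\mathbf{x}-\mathbf{x}_0\in W^\NN\cap c_0(L)$ I obtain $\alpha\mathbf{x}-\alpha_0\mathbf{x}_0\in(V'+V')^\NN\cap c_0(L)\subseteq V^\NN\cap c_0(L)$, which proves joint continuity. The complex case is identical, with $|\cdot|$ the modulus on $\mathbb{C}$.

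For (ii) I would only add the observation that local convexity is inherited coordinatewise: if $V\in\mathcal{N}(L)$ is convex then $V^\NN\cap c_0(L)$ is convex, since a convex combination $t\mathbf{x}+(1-t)\mathbf{y}$ of elements of $V^\NN\cap c_0(L)$ has each coordinate $tx_n+(1-t)y_n\in V$ and again lies in $c_0(L)$. Taking $V$ from a convex base of $\mathcal{N}(L)$ thus yields a convex base at $0$ for $\mathfrak{F}_0(L)$, so together with (i) the group $\mathfrak{F}_0(L)$ is a real LCS. I expect the only real obstacle to be the uniform-in-$n$ estimate for the term $(\alpha-\alpha_0)\mathbf{x}_0$; everything else is the routine translation of the two TVS continuity axioms through the supremum-type neighborhood base, the head/tail splitting being exactly what the membership $\mathbf{x}_0\in c_0(L)$ makes possible.
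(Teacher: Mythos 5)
Your proof is correct, but it takes a different route from the paper's on both parts. For (i), the paper does not verify joint continuity of scalar multiplication at an arbitrary point as you do; instead it checks that the base $\{U^\NN\cap c_0(L):U\in\mathcal{N}(L)\}$ satisfies K\"othe's three axioms for a neighborhood base of a linear topology (\cite[\S 15.2]{Kothe}): additivity, the balancedness condition $\alpha\mathbf{V}\subseteq\mathbf{U}$ for $|\alpha|\le 1$, and absorbency, and then invokes that criterion. The head/tail splitting that you correctly identify as the crux --- controlling $(\alpha-\alpha_0)\mathbf{x}_0$ uniformly in $n$ by using $\mathbf{x}_0\in c_0(L)$ on the tail and continuity of $\beta\mapsto\beta x_n^0$ on the finitely many head coordinates --- appears in the paper in exactly one place, namely the verification of absorbency: given $\mathbf{x}$, one chooses $m$ with $x_n\in V$ for $n>m$, absorbs the head $x_1,\dots,x_m$ into $kV$, and uses $\frac1k V\subseteq U$ on the tail. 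So your argument is essentially the paper's absorbency check plus the standard bookkeeping that the K\"othe criterion packages away; yours is self-contained, the paper's is shorter. For (ii) the divergence is more pronounced: you prove local convexity directly by noting that $V$ convex implies $V^\NN\cap c_0(L)$ convex, which is both correct and more elementary, whereas the paper routes through Banaszczyk's equivalence \cite[2.4]{Ban} (a TVS is locally convex iff it is locally quasi-convex as a topological group) together with the already-established preservation of local quasi-convexity under $\mathfrak{F}_0$ (Fact \ref{fTopPropF}(i)); that detour buys nothing here beyond reuse of cited machinery, and your coordinatewise argument would work verbatim. No gaps: your equicontinuity step ($W:=M^{-1}U$ with $U$ balanced, so $\lambda W\subseteq U$ for $|\lambda|\le M$), the choice $\delta\le 1$ guaranteeing $|\alpha|\le M$, and the remark that $c_0(L)$ is a linear subspace (so the difference stays in $c_0(L)$) are all in order.
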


\begin{proof}
(i) Set $\mathfrak{U}=\{ \mathbf{U}: U\in\mathcal{N}(L)\}$, where $\mathbf{U}:= U^\NN \cap c_0(L)$. We have to check the next three conditions (see \cite[\S 15.2]{Kothe}):
\begin{enumerate}
\item[(a)] For each $\mathbf{U}\in\mathfrak{U}$ there is a $\mathbf{V}\in\mathfrak{U}$ with $\mathbf{V}+\mathbf{V}\subseteq \mathbf{U}$.
\item[(b)] For each $\mathbf{U}\in\mathfrak{U}$ there is a $\mathbf{V}\in\mathfrak{U}$ for which $\alpha \mathbf{V}\subseteq \mathbf{U}$ for all $\alpha$ with $|\alpha|\leq 1$.
\item[(c)] For each $\mathbf{U}\in\mathfrak{U}$ and each $\mathbf{x}\in \mathfrak{F}_0(L)$ there is a $k\in\NN$ for which $\mathbf{x}\in  k\mathbf{U}$.
\end{enumerate}
Since $L$ is a TVS, take  $V\in\mathcal{N}(L)$ such that $V+V\subseteq U$ and $\alpha V\subseteq U$ for all $\alpha$ with $|\alpha|\leq 1$. Clearly, $\mathbf{V}$ satisfies (a) and (b).
Let us check (c).
For $\mathbf{x}=(x_n)_{n\in\NN} \in \mathfrak{F}_0(L)$ choose $m\in\NN$ such that $x_n\in V$ for every $n>m$. Since $L$ is a TVS, take $k\in\NN$ such that $x_1,\dots, x_m \in kV\subseteq kU$. Note that $\frac{1}{k}V\subseteq U$. So, for every $n>m$ we have $x_n =k\cdot (\frac{1}{k} x_n)\in kU$. Now it is clear that $\mathbf{x}\in  k\mathbf{U}$.

(ii) By item (i),  $\mathfrak{F}_0 (L)$ is a real TVS. Since $L$ is locally quasi-convex group by \cite[2.4]{Ban}, the group $\mathfrak{F}_0 (L)$ is also locally quasi-convex by  Fact \ref{fTopPropF}(i). Applying \cite[2.4]{Ban} once again we obtain that  $\mathfrak{F}_0 (L)$ is  a real locally convex space.
\end{proof}

Proposition \ref{p-TVS},  Fact \ref{fTopPropF}(i) and \cite[Corollary 16]{BMPT} immediately imply:
\begin{corollary}
If $L$ is a real complete LCS, then $\mathfrak{F}_0 (L)^+$ is an $(E)$-space.
\end{corollary}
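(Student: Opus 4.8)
The plan is to reduce the assertion to the two structural facts about $\mathfrak{F}_0$ already at hand together with the quoted statement \cite[Corollary 16]{BMPT}, which asserts that the Bohr modification of any real complete locally convex space is an $(E)$-space. Thus the only real task is to verify that $\mathfrak{F}_0(L)$ is itself a real complete LCS, after which \cite[Corollary 16]{BMPT} applies verbatim to $\mathfrak{F}_0(L)$ and produces the claim for $\mathfrak{F}_0(L)^+$.

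First I would establish local convexity. Since $L$ is a real LCS, Proposition \ref{p-TVS}(ii) gives that $\mathfrak{F}_0(L)$, with the coordinatewise real vector-space structure inherited from $L$ and the uniform topology $\mathfrak{u}_0$, is again a real LCS. Next I would establish completeness. Completeness is a property in $\mathcal{P}_s \cup \mathcal{P}_q$ listed in Fact \ref{fTopPropF}(i), so $\mathfrak{F}_0(L)$ is complete precisely because $L$ is complete. Combining these two steps, $\mathfrak{F}_0(L)$ is a real complete locally convex space, so the hypotheses of \cite[Corollary 16]{BMPT} are met.

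Finally, applying \cite[Corollary 16]{BMPT} to the real complete LCS $\mathfrak{F}_0(L)$ yields that its Bohr modification $\mathfrak{F}_0(L)^+$ is an $(E)$-space, which is exactly the desired conclusion. There is no genuine obstacle beyond matching hypotheses; the single point that warrants care is that the real topological-vector-space structure furnished by Proposition \ref{p-TVS} is the same structure to which \cite[Corollary 16]{BMPT} refers, so that the Bohr modification $\mathfrak{F}_0(L)^+$ (i.e.\ $\mathfrak{F}_0(L)$ with the weak topology $\sigma(\mathfrak{F}_0(L),\widehat{\mathfrak{F}_0(L)})$) coincides with the weak topology $\sigma$ of $\mathfrak{F}_0(L)$ viewed as a locally convex space. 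This identification holds because for a real LCS every continuous character has the form $x\mapsto \exp(2\pi i f(x))$ for a unique continuous linear functional $f$, so the group-theoretic Bohr topology and the vector-space weak topology agree, and \cite[Corollary 16]{BMPT} is therefore genuinely about $\mathfrak{F}_0(L)^+$.
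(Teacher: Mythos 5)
Your proof is correct and follows exactly the paper's route: the paper derives the corollary immediately from Proposition \ref{p-TVS} (local convexity of $\mathfrak{F}_0(L)$), Fact \ref{fTopPropF}(i) (preservation of completeness), and \cite[Corollary 16]{BMPT}. Your closing remark that characters of a real LCS are of the form $x\mapsto \exp(2\pi i f(x))$ with $f$ a continuous linear functional, so the group Bohr topology agrees with the vector-space weak topology, is a valid and worthwhile check that the paper leaves implicit.
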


\begin{remark} \label{r-TP} {\em
Under Martin's Axiom, van  Douwen  \cite{vDou} announced the  existence of countably  compact  topological  group $X$  for  which  $X^2$ is  not  countably  compact. Under the weaker assumption, Hart and van Mill  \cite{HaMill} constructed a topological group $X$ whose square is not countably compact. Their result was generalized by Tomita \cite{Tomit}. Shibakov  showed that, under CH, the square of a countable Fr\'{e}chet-Urycohn Abelian group can be not Fr\'{e}chet-Urycohn \cite{Shi} and even not sequential (see \cite{Sha}). Further results and questions in this direction see
\cite{Maly, Sha}.
So, under additional hypothesis and taking into account that $\mathfrak{F}_0$ contains $X^2$ as a closed subgroup, the functor $\mathfrak{F}_0$ does not preserve countable compactness,  sequentiality and  Fr\'{e}chet-Urysohness. We do not know whether there exists an angelic Abelian topological group whose square is not angelic. Note only that (under CH) \cite{BoRos} provides an example of a nonangelic product of two compact Hausdorff angelic spaces.}
\end{remark}


\section{Respected properties for topological groups} \label{secRes}

For a Tychonoff topological space $X$, we denote by $\mathcal{S}(X)$ ($\mathcal{C}(X)$, $\mathcal{SC}(X)$, $\mathcal{CC}(X)$, $\mathcal{PC}(X)$ and $\mathcal{FB}(X)$, respectively) the set of all converging sequences in $X$ with the limit point (respectively, the set of all compact, sequentially compact, countably compact, pseudocompact and functionally bounded in $X$ subsets of $X$). Note that continuous images and finite disjoint unions of compact sets (respectively, sequentially compact, countably compact, pseudocompact and functionally bounded in $X$ sets)  are compact sets (sequentially compact, countably compact, pseudocompact and functionally bounded in $X$ sets, respectively) (see \cite{Eng}).  The following diagram of inclusions  is well-known (see \cite[\S 3.10]{Eng}):
\begin{equation} \label{diag}
\begin{diagram}
\node[2]{\mathcal{C}(X)} \arrow{se,b}{}\\
\node{\mathcal{S}(X)} \arrow{ne,r}{}
\arrow{se,b}{}
\node[2]{\mathcal{CC}(X)} \\
\node[2]{\mathcal{SC}(X)} \arrow{ne,r}{}
\end{diagram}
\longrightarrow  \mathcal{PC}(X) \longrightarrow  \mathcal{FB}(X).
\end{equation}
In general, if $\mathcal{P}$ is a topological property and $X$ is a topological space, we denote by $\mathcal{P}(X)$ the set of all subspaces of $X$ with $\mathcal{P}$. In what follows we consider the following families of topological properties
\[
\mathfrak{P}_0 := \{ \mathcal{S}, \mathcal{C}, \mathcal{SC}, \mathcal{CC}, \mathcal{PC}\} \quad \mbox{ and } \quad \mathfrak{P} :=\mathfrak{P}_0 \cup \mathcal{FB}.
\]

Following \cite{ReT3}, we define:
\begin{definition} {\rm (\cite{ReT3})}
If $\mathcal{P}$ denotes a topological property, then we say that a $MAP$ group $X$ {\em respects} $\mathcal{P}$ if $\mathcal{P}(X)=\mathcal{P}(X^+)$.
\end{definition}
So, for example, $X$ respects sequentiality or compactness if $\mathcal{S}(X)=\mathcal{S}(X^+)$ or $\mathcal{C}(X)=\mathcal{C}(X^+)$, respectively.


Diagram \ref{diag} immediately implies the next simple necessary condition when a MAP group respects one of the properties from $\mathfrak{P}$:
\begin{proposition} \label{p-NoRes}
Let $X$ be a MAP topological group. If $X$ respects one of the properties from $\mathfrak{P}$, then every  convergent sequence in $X^+$  with the limit point is functionally bounded in $X$. In other words, if  $X^+$ has a convergent sequence  with the limit point which is not functionally bounded in $X$, then $X$ does not respect any property from $\mathfrak{P}$.
\end{proposition}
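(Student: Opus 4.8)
The plan is to read the conclusion off the inclusion diagram \ref{diag} directly, using that inside the family $\mathfrak{P}$ the property $\mathcal{S}$ (being a convergent sequence together with its limit) is the \emph{smallest} one, while functional boundedness $\mathcal{FB}$ is the \emph{largest}. Indeed, \ref{diag} asserts for every Tychonoff space $Y$ that $\mathcal{S}(Y)\subseteq\mathcal{P}(Y)\subseteq\mathcal{FB}(Y)$ for each $\mathcal{P}\in\mathfrak{P}$. I shall invoke this once on $Y=X^+$ and once on $Y=X$; both are Tychonoff, being Hausdorff topological groups (recall that $X$ being MAP forces $X^+$, and hence $X$, to be Hausdorff, so that the diagram applies to each of them).

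First I would fix a property $\mathcal{P}\in\mathfrak{P}$ which $X$ respects, so that $\mathcal{P}(X)=\mathcal{P}(X^+)$ by definition. Let $S$ be a convergent sequence in $X^+$ together with its limit point, i.e.\ $S\in\mathcal{S}(X^+)$. Applying the lower-left part of \ref{diag} to the space $X^+$ gives $\mathcal{S}(X^+)\subseteq\mathcal{P}(X^+)$, so $S\in\mathcal{P}(X^+)$; the respecting hypothesis then yields $S\in\mathcal{P}(X^+)=\mathcal{P}(X)$. To finish, I would apply the upper-right part of \ref{diag}, this time to the space $X$, to obtain $\mathcal{P}(X)\subseteq\mathcal{FB}(X)$, whence $S\in\mathcal{FB}(X)$; this is precisely the assertion that $S$ is functionally bounded in $X$. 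The ``in other words'' formulation in the statement is simply the contrapositive of this implication.

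There is no genuine obstacle here: the argument is a two-link chase through \ref{diag}. The one point deserving care is to invoke the diagram on the correct space at each link --- $X^+$ in order to pass \emph{up} from $\mathcal{S}$ to $\mathcal{P}$, and then $X$ in order to pass \emph{up} from $\mathcal{P}$ to $\mathcal{FB}$ --- since a priori $\mathcal{P}(X)$ and $\mathcal{P}(X^+)$ need not coincide, and it is exactly the hypothesis that $X$ respects $\mathcal{P}$ which bridges the two spaces in the middle of the chain.
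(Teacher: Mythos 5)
Your proposal is correct and is exactly the argument the paper intends: the paper gives no written proof beyond the remark that Diagram \ref{diag} immediately implies the statement, and your two-link chase $\mathcal{S}(X^+)\subseteq\mathcal{P}(X^+)=\mathcal{P}(X)\subseteq\mathcal{FB}(X)$ is precisely that implication spelled out, with the respecting hypothesis supplying the middle equality. Your care in applying the diagram to $X^+$ for the first inclusion and to $X$ for the second is the right (and only) point of substance here.
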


This proposition and Diagram \ref{diag} motivate the following definition.
\begin{definition}
A MAP topological group $X$ is said to have the {\rm sequentially bounded property} ({\rm $SB$-property} for short) if every functionally bounded sequence in $X^+$ is also functionally bounded in $X$.
\end{definition}

\begin{proposition} \label{p-Res}
Let $X$ be a MAP topological group. If $X$ has the $SB$-property, then every functionally bounded subset in $X^+$ is precompact in $X$.
\end{proposition}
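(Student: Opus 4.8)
The plan is to argue by contradiction, using Proposition \ref{p-UD} to reduce precompactness in $X$ to the nonexistence of infinite uniformly discrete subsets, and then invoking the $SB$-property together with Lemma \ref{l-FB} to exclude such subsets. Let $E$ be a functionally bounded subset of $X^+$. If $E$ is finite it is trivially precompact in $X$, so I may assume $E$ is infinite. Since precompactness (for a possibly non-Abelian $X$) means both left and right precompactness, I will treat the left case; the right case is obtained by the same argument with ``left'' replaced by ``right'' throughout.

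First I would suppose, toward a contradiction, that $E$ is \emph{not} left precompact in $X$. By Proposition \ref{p-UD}, $E$ then contains a countably infinite left uniformly discrete subset $A$ of $X$. The key transfer step is the next observation: because $A\subseteq E$ and $E$ is functionally bounded in $X^+$, Lemma \ref{l-Boun}(1) (closure of $\mathcal{FB}(X^+)$ under arbitrary subsets) gives that $A$ is functionally bounded in $X^+$. Regarding the countable set $A$ as the range of a one-to-one sequence, I can apply the $SB$-property of $X$ to conclude that $A$ is functionally bounded in $X$ as well.

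The contradiction is now immediate from Lemma \ref{l-FB}: $A$ is an infinite left uniformly discrete subset of $X$ which is functionally bounded in $X$, whereas that lemma forces such a set to be finite. Hence $E$ must be left precompact in $X$, and by the symmetric argument it is right precompact, so $E$ is precompact in $X$. (One can view this as the relativized analogue of Proposition \ref{p-Bo}, where functional boundedness is measured in the coarser topology $X^+$ but precompactness is deduced in $X$, the $SB$-property supplying exactly the bridge between the two.)

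I expect the only genuine subtlety to be the legitimacy of the transfer step, namely that the $SB$-property, stated for \emph{sequences}, applies to a countably infinite uniformly discrete \emph{set}: this is harmless since functional boundedness is a property of the range, and enumerating $A$ as a one-to-one sequence changes nothing. The remaining points are bookkeeping: checking that functional boundedness passes to subsets via Lemma \ref{l-Boun}(1), and carrying both the left and right versions of Proposition \ref{p-UD} and Lemma \ref{l-FB} so that full precompactness (rather than merely one-sided precompactness) is obtained.
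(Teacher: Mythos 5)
Your proof is correct and follows essentially the same route as the paper's: a contradiction argument that extracts a countably infinite left (right) uniformly discrete subset via Proposition \ref{p-UD}, transfers functional boundedness from $X^+$ to $X$ using the $SB$-property, and then contradicts Lemma \ref{l-FB}. The minor differences (explicitly invoking Lemma \ref{l-Boun}(1) for the subset step and justifying the set-versus-sequence point) are harmless elaborations of what the paper does implicitly.
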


\begin{proof}
Let $A$ be  a functionally bounded subset in $X^+$. Suppose for a contradiction that $A$ is not left (right) precompact in $X$. By Proposition \ref{p-UD}, there exists an infinite sequence
$\{ u_n\}_{n\in\NN}$ in $A$ which is left (right) uniformly discrete in $X$. Now Lemma \ref{l-FB} implies that $\{ u_n\}$ is not functionally bounded in $X$. But, as a subset of $A$, the sequence $\{ u_n\}$ is functionally bounded in $X^+$. This contradicts the $SB$-property. Thus $A$ is left and right precompact, and hence it is a precompact subset of $X$.
\end{proof}

\begin{corollary}
If  a MAP topological group $X$ has the $\SP$-property, then the following are equivalent:
\begin{enumerate}
\item[{\rm (i)}] $X$ has the $SB$-property.
\item[{\rm (ii)}] Every functionally bounded subset of $X^+$ is precompact in $X$.
\end{enumerate}
\end{corollary}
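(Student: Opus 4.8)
The plan is to prove the two implications separately, observing at the outset that the $\SP$-property is needed only for the implication (ii) $\Rightarrow$ (i), while (i) $\Rightarrow$ (ii) holds unconditionally.

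For (i) $\Rightarrow$ (ii), I would simply quote Proposition \ref{p-Res}: if $X$ has the $SB$-property, then every functionally bounded subset of $X^+$ is precompact in $X$. No appeal to the $\SP$-property is required here, so this direction is immediate.

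For (ii) $\Rightarrow$ (i), I would start from an arbitrary functionally bounded sequence $A=\{ u_n\}_{n\in\NN}$ in $X^+$ and aim to show that $A$ is functionally bounded in $X$. First, hypothesis (ii) gives that $A$ is precompact in $X$. The key step is the observation that $A$, being the range of a sequence, is countable and hence a separable subset of $X$; this is precisely what makes the $\SP$-property applicable. Invoking it, the closure $\overline{A}$ of $A$ in $X$ is compact. Since $\mathcal{FB}(X)$ contains all compact subsets and is closed under taking subsets by Lemma \ref{l-Boun}(1), it follows that $A\subseteq\overline{A}$ is functionally bounded in $X$. This establishes the $SB$-property and closes the equivalence.

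I do not expect a serious obstacle here; the only subtle point, which must not be overlooked, is that the $SB$-property is stated for sequences, i.e.\ for countable sets, rather than for arbitrary functionally bounded subsets of $X^+$. It is exactly this built-in countability (hence separability) that allows precompactness in $X$ to be upgraded to compact closure via the $\SP$-property. Were the property demanded for all functionally bounded subsets, precompactness alone would not suffice, so the equivalence genuinely hinges on restricting attention to sequences.
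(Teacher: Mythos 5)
Your proposal is correct and follows essentially the same route as the paper: Proposition \ref{p-Res} for (i)$\Rightarrow$(ii), and for (ii)$\Rightarrow$(i) precompactness of the sequence in $X$, the $\SP$-property to get compact closure, and Lemma \ref{l-Boun}(1) to pass from the compact closure back to the sequence. Your explicit remark that the countability of the sequence supplies the separability needed to invoke the $\SP$-property is left implicit in the paper, but is exactly the right observation.
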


\begin{proof}
(i)$\Rightarrow$(ii) follows from Proposition \ref{p-Res}.

(ii)$\Rightarrow$(i) Let $A =\{ x_n\}_{n\in\NN}$ be a functionally bounded sequence in $X^+$. So $A$ is precompact in $X$. As $X$ has the $\SP$-property, the closure $B:=\mathrm{cl}_X (A)$ is compact in $X$. So $B$ is functionally bounded in $X$. Thus $A$ is also functionally bounded in $X$. Therefore $X$ has the $SB$-property.
\end{proof}






Now we obtain a sufficient condition  when a {\it complete} MAP group $X$ respects one of the properties from $\mathfrak{P}$. Recall that a topological space $Y$ is called a {\it $\mu$-space} if every functionally bounded subset of $Y$ is relatively compact.
\begin{proposition} \label{t-Res}
Let $X$ be a complete MAP topological group. If $X$ has the $SB$-property, then
\begin{enumerate}
\item[{\rm (i)}] $X$ respects each property from $\mathfrak{P}$;
\item[{\rm (ii)}] $X^+$ is  a $\mu$-space.
\end{enumerate}
\end{proposition}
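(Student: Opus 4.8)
The plan is to use that the identity map $\iota\colon X\to X^+$ is a continuous bijection (because $\tau^+\subseteq\tau$) and to reduce everything to the observation that $\tau$ and $\tau^+$ coincide on the compact sets that the $SB$-property produces. I would first dispose of the trivial inclusion $\mathcal P(X)\subseteq\mathcal P(X^+)$, valid for every $\mathcal P\in\mathfrak P$: each such class is preserved by the continuous map $\iota$ (for $\mathcal S$ because continuous maps preserve convergence, for $\mathcal{FB}$ by Lemma~\ref{l-Boun}(2), and the compactness-type properties are standard). This half uses neither completeness nor the $SB$-property.

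For the reverse inclusion I would fix $A\in\mathcal P(X^+)$ with $\mathcal P\in\mathfrak P$. Diagram~\ref{diag} places every property of $\mathfrak P$ below $\mathcal{FB}$, so $A$ is functionally bounded in $X^+$; by the $SB$-property and Proposition~\ref{p-Res} the set $A$ is then precompact in $X$, and since $X$ is complete, Corollary~\ref{c-Bo} shows that $K:=\mathrm{cl}_X(A)$ is compact in $X$. The key step is to prove $\tau|_K=\tau^+|_K$: the restriction $\iota|_K\colon(K,\tau)\to(K,\tau^+)$ is a continuous bijection from a compact space onto a Hausdorff space (Hausdorff since $X$ is MAP), hence a homeomorphism. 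Because $A\subseteq K$, this gives $\tau|_A=\tau^+|_A$, so every property in $\mathfrak P_0$, being intrinsic to the subspace topology of $A$, passes from $X^+$ to $X$ (for $\mathcal S$: once all points lie in $K$, convergence in $\tau^+$ is the same as in $\tau$; for $\mathcal C,\mathcal{SC},\mathcal{CC},\mathcal{PC}$ it is literally a property of $(A,\tau^+|_A)=(A,\tau|_A)$). For $\mathcal{FB}$ it suffices that $A\subseteq K$ with $K$ compact, by Lemma~\ref{l-Boun}(1). Hence $\mathcal P(X^+)\subseteq\mathcal P(X)$, proving (i).

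For (ii) I would take $A\in\mathcal{FB}(X^+)$ and form $K=\mathrm{cl}_X(A)$, compact in $X$ as above and therefore compact, hence closed, in the coarser Hausdorff topology $\tau^+$. Then $\mathrm{cl}_{X^+}(A)\subseteq K$ is a $\tau^+$-closed subset of a $\tau^+$-compact set, so it is compact; thus $A$ is relatively compact in $X^+$, i.e. $X^+$ is a $\mu$-space. The single genuinely nontrivial ingredient throughout is the compact-to-Hausdorff argument yielding $\tau|_K=\tau^+|_K$; once it is established, every property transfers mechanically and I expect no further difficulty.
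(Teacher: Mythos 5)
Your proof is correct and follows essentially the same route as the paper's: reduce via Diagram~\ref{diag} and the $SB$-property (through Proposition~\ref{p-Res}) to precompactness in $X$, use completeness to obtain a compact set containing $A$, and conclude $\tau$ and $\tau^+$ agree there because the identity is a continuous bijection from a compact space onto a Hausdorff one. The only cosmetic differences are that the paper takes the closure $B=\mathrm{cl}_{X^+}(A)$ (functionally bounded by Lemma~\ref{l-Boun}, closed and precompact in $X$, hence compact by Fact~\ref{f-Precompact}) where you take $K=\mathrm{cl}_X(A)$, and that you spell out the trivial inclusion $\mathcal{P}(X)\subseteq\mathcal{P}(X^+)$, which the paper leaves implicit.
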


\begin{proof}
Let $\mathcal{P}\in\mathfrak{P}$ be arbitrary.
Denote by $id:X\to X^+$ the identity map. Let $A$ be a subset of $X^+$ having the property $\mathcal{P}$. Then $A$ is functionally bounded in $X^+$ by Diagram \ref{diag}. By Lemma \ref{l-Boun}, the closure $B:=\mathrm{cl}_{X^+} (A)$ is also functionally bounded in $X^+$. Now Proposition \ref{p-Res} implies that $B$ is a closed precompact subset of $X$. So $B$ is a compact subset of $X$ by Fact \ref{f-Precompact}. Hence $B$ is compact in $X^+$. Since the restriction $id|_{B}$ of $id$ onto $B$ is a homeomorphism we obtain that  $A$ also has $\mathcal{P}$. Thus $X$ respects $\mathcal{P}$ that proves item (i). Now the partial case in this proof when $\mathcal{P}=\mathcal{FB}$ shows that $X^+$ is  a $\mu$-space.
\end{proof}

Taking into account \cite[Theorem 3.2]{HM} the next theorem  generalizes the equivalence of (a)-(d) in Theorem 3.3 of \cite{HM}:
\begin{theorem}
Let $X$ be a complete MAP topological group. Then the following are equivalent:
\begin{enumerate}
\item[{\rm (i)}] $X$ respects compactness and $X^+$ is a $\mu$-space.
\item[{\rm (ii)}] $X$ respects countable compactness and $X^+$ is a $\mu$-space.
\item[{\rm (iii)}] $X$ respects pseudocompactness and $X^+$ is a $\mu$-space.
\item[{\rm (iv)}] $X$ respects functional  boundedness and $X^+$ is a $\mu$-space.
\item[{\rm (v)}] $X$ has the $SB$-property.
\end{enumerate}
Hence, if (i)-(v) hold, then $X$ respects sequentiality and sequential compactness as well.
\end{theorem}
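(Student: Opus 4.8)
The plan is to organize everything around condition (v), which functions as the hub: I would first establish that (v) implies each of (i)--(iv), then that each of (i)--(iv) implies (v), and finally read off the ``Hence'' clause. The implication (v)$\Rightarrow$(i)--(iv) is essentially already available. If $X$ has the $SB$-property, then Proposition \ref{t-Res} delivers simultaneously that $X$ respects every property from $\mathfrak{P}$ (in particular compactness, countable compactness, pseudocompactness and functional boundedness, since $\mathcal{C},\mathcal{CC},\mathcal{PC},\mathcal{FB}\in\mathfrak{P}$) and that $X^+$ is a $\mu$-space. Pairing the $\mu$-space assertion with each of the four respecting assertions yields (i), (ii), (iii) and (iv) respectively.

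For the converse I would treat (i)--(iv) by a single uniform argument. Let $\mathcal{Q}$ be any one of $\mathcal{C},\mathcal{CC},\mathcal{PC},\mathcal{FB}$ and assume that $X$ respects $\mathcal{Q}$ and that $X^+$ is a $\mu$-space; the goal is the $SB$-property. Take a functionally bounded subset $A$ of $X^+$. Since $X^+$ is a $\mu$-space, its closure $B:=\mathrm{cl}_{X^+}(A)$ is compact in $X^+$, so by the containments in Diagram \ref{diag} we have $B\in\mathcal{Q}(X^+)$ (a compact set is at the same time countably compact, pseudocompact and functionally bounded). Because $X$ respects $\mathcal{Q}$, that is $\mathcal{Q}(X)=\mathcal{Q}(X^+)$, the very same set satisfies $B\in\mathcal{Q}(X)$. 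Reading Diagram \ref{diag} once more inside $X$ gives $\mathcal{Q}(X)\subseteq\mathcal{FB}(X)$, so $B$ is functionally bounded in $X$, and since $A\subseteq B$, Lemma \ref{l-Boun}(1) shows $A$ is functionally bounded in $X$. Thus every functionally bounded subset of $X^+$ is functionally bounded in $X$, which in particular gives the $SB$-property. This settles (i)$\Rightarrow$(v), (ii)$\Rightarrow$(v), (iii)$\Rightarrow$(v) and (iv)$\Rightarrow$(v) at once. (For $\mathcal{Q}=\mathcal{FB}$ the chain degenerates: then $A\in\mathcal{FB}(X^+)=\mathcal{FB}(X)$ directly, and the $\mu$-space hypothesis is not even needed.)

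Once the five conditions are known to be equivalent, (v) holds, so a last appeal to Proposition \ref{t-Res}(i) shows that $X$ respects every property from $\mathfrak{P}\supseteq\mathfrak{P}_0$; as $\mathcal{S},\mathcal{SC}\in\mathfrak{P}_0$, in particular $X$ respects sequentiality and sequential compactness, which is exactly the closing ``Hence'' clause.

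I do not expect a genuine obstacle here, since the analytic content is already packaged in Propositions \ref{p-Res} and \ref{t-Res}; the only point requiring care is the bookkeeping of which hypothesis performs which move in Diagram \ref{diag}. The $\mu$-space hypothesis is what lifts ``functionally bounded in $X^+$'' up to ``compact in $X^+$'', the respecting hypothesis transports the (weaker) property unchanged across to $X$, and the downward containments of the diagram bring us back to ``functionally bounded in $X$''. Completeness of $X$ is consumed entirely inside Proposition \ref{t-Res} (via Fact \ref{f-Precompact}) and is not invoked again in the converse implications.
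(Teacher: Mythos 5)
Your proposal is correct and takes essentially the same route as the paper: the paper likewise obtains (v)$\Rightarrow$(i)--(iv) and the closing clause directly from Proposition \ref{t-Res}, and proves (i)--(iv)$\Rightarrow$(v) by the identical mechanism of passing to $B=\mathrm{cl}_{X^+}(A)$, using the $\mu$-space hypothesis to make $B$ compact in $X^+$, and transporting it back to $\mathcal{FB}(X)$ via the respecting hypothesis and Diagram \ref{diag} together with Lemma \ref{l-Boun}. The only (harmless) deviations are that you run the argument for arbitrary functionally bounded subsets rather than sequences, and that you record the true but unused observation that case (iv) does not need the $\mu$-space assumption.
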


\begin{proof}
(i)-(iv)$\Rightarrow$(v) Let $A$ be a functionally bounded sequence in $X^+$. As $X^+$ is a $\mu$-space, the set $B:=\mathrm{cl}_{X^+} (A)$ is compact in $X^+$. By hypothesis on $X$ and Diagram \ref{diag}, $B$ is functionally bounded in $X$. Thus $A$ is a  functionally bounded subset of $X$ (see Lemma \ref{l-Boun}). Therefore $X$ has the $SB$-property.

(v) implies (i)-(iv) and the last assertion by Proposition \ref{t-Res}.
\end{proof}

For the properties from $\mathfrak{P}_0$ we complete Proposition \ref{t-Res} by the following:
\begin{proposition} \label{t-cRes}
Let $X$ be a MAP  topological group and $\mathcal{P}\in \mathfrak{P}_0$. If the completion $\overline{X}$ of $X$ respects $\mathcal{P}$, then $X$ also respects $\mathcal{P}$.
\end{proposition}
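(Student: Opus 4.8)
The plan is to transfer the respecting property from the completion $\overline{X}$ down to $X$ by exploiting the fact that the completion has the same characters as $X$ (Lemma \ref{l-BohrRep}), so that the Bohr topology on $X$ is precisely the restriction of the Bohr topology on $\overline{X}$, and then use that every property $\mathcal{P}\in\mathfrak{P}_0$ is carried along continuous maps and homeomorphisms. Let $\mathcal{P}\in\mathfrak{P}_0 = \{\mathcal{S},\mathcal{C},\mathcal{SC},\mathcal{CC},\mathcal{PC}\}$. First I would record the basic set-up: we always have $\mathcal{P}(X)\subseteq \mathcal{P}(X^+)$ because the identity $X\to X^+$ is continuous and each property in $\mathfrak{P}_0$ is preserved by continuous maps, so the content of ``respecting'' is the reverse inclusion $\mathcal{P}(X^+)\subseteq\mathcal{P}(X)$. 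Thus I need to show that a subset $A\subseteq X$ having $\mathcal{P}$ in $X^+$ already has $\mathcal{P}$ in $X$.

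The key observation is that $X^+$ is a \emph{topological subspace} of $\overline{X}^+$. Indeed, by Lemma \ref{l-BohrRep} we have $\widehat{X}=\widehat{\overline{X}}$, and since the characters of $X$ are exactly the restrictions of the characters of $\overline{X}$ to the dense subgroup $X$, the Bohr topology $\sigma(X,\widehat{X})$ equals the subspace topology induced on $X$ by $\sigma(\overline{X},\widehat{\overline{X}})$; equivalently, $X$ is dually embedded in $\overline{X}$ and Lemma \ref{lBohr}(4) gives $X^+=(X,\sigma(\overline{X},\widehat{\overline{X}})|_X)$. Hence $X^+\hookrightarrow\overline{X}^+$ is a topological embedding. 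Next I would take $A\subseteq X$ with $A\in\mathcal{P}(X^+)$; viewing $A$ inside $\overline{X}^+$ through this embedding, $A$ still has $\mathcal{P}$ as a subspace of $\overline{X}^+$ (the subspace topologies agree). By hypothesis $\overline{X}$ respects $\mathcal{P}$, so $\mathcal{P}(\overline{X}^+)=\mathcal{P}(\overline{X})$, and therefore $A\in\mathcal{P}(\overline{X})$, i.e. $A$ has $\mathcal{P}$ as a subspace of the original topology of $\overline{X}$. Finally, since $X$ carries the subspace topology from $\overline{X}$ (the completion is a topological supergroup of $X$), and $A\subseteq X$, the subspace topology on $A$ from $\overline{X}$ coincides with that from $X$, so $A\in\mathcal{P}(X)$, as desired.

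The one point requiring care, and the step I expect to be the main obstacle, is the property $\mathcal{P}=\mathcal{CC}$ (countable compactness) and to a lesser extent $\mathcal{SC}$ and $\mathcal{PC}$: for these, having $\mathcal{P}$ as a subspace is an intrinsic property of $A$ with its own subspace topology, and the argument must verify that passing between the three topologies (that of $X^+$, that of $\overline{X}^+$, and that of $\overline{X}$) never changes the subspace topology on the \emph{fixed set} $A\subseteq X$. This is exactly what the embeddings above guarantee, since all the relevant topologies on $A$ are induced from a single chain of subspace inclusions $A\subseteq X\subseteq\overline{X}$ (in the Bohr topology) and $A\subseteq X\subseteq\overline{X}$ (in the original topology), so no cluster point or limit is ever required to lie outside $A$ itself. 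For the sequential properties $\mathcal{S}$ and $\mathcal{SC}$ one also notes that convergence and the choice of subsequential limit inside $A$ are unaffected by which ambient group one uses, because the limit is required to belong to $A$. Thus the proof reduces to assembling the embedding $X^+\hookrightarrow\overline{X}^+$ from Lemma \ref{l-BohrRep} and Lemma \ref{lBohr}(4), together with the elementary transitivity of subspace topologies; no delicate compactness arguments beyond these are needed once the embedding is in place.
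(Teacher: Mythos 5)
Your proposal is correct and follows essentially the same route as the paper: the paper's proof also hinges on the single observation that the identity $i^{+}:X^{+}\to\overline{X}^{+}$ is a topological embedding, transfers $A\in\mathcal{P}(X^{+})$ to $\mathcal{P}(\overline{X}^{+})$, applies the hypothesis to get $A\in\mathcal{P}(\overline{X})$, and concludes $A\in\mathcal{P}(X)$ because $A\subseteq X\subseteq\overline{X}$ and every property in $\mathfrak{P}_0$ is intrinsic to the subspace topology of $A$ (the point you rightly single out as the place where $\mathfrak{P}_0$, as opposed to $\mathcal{FB}$, is essential). The one difference worth noting is how the embedding $X^{+}\hookrightarrow\overline{X}^{+}$ is justified: you derive it from $\widehat{X}=\widehat{\overline{X}}$, dual embeddedness and Lemma \ref{lBohr}(4), i.e.\ via the identification of the Bohr topology with $\sigma(X,\widehat{X})$, which is available only for \emph{Abelian} $X$, whereas the proposition is stated for arbitrary MAP topological groups and Lemma \ref{lBohr} is proved only in the Abelian setting. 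The paper instead reads the embedding off directly from $b\overline{X}=bX$ (Lemma \ref{l-BohrRep}): both $X^{+}$ and $\overline{X}^{+}$ carry the topology induced from the \emph{same} compact group $bX$, so the inclusion is automatically an embedding, with no duality theory needed. Since you already invoke Lemma \ref{l-BohrRep}, this is a one-line repair rather than a structural flaw, but as written your argument covers only the Abelian case of the statement.
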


\begin{proof}
Denote by $i: X\to \overline{X}$ the natural embedding. Since $bX = b\overline{X}$ by Lemma \ref{l-BohrRep}, the identity map $i^+ :  X^+\to \overline{X}^+, i^+(x)=x,$ is also an embedding.
Let $A\in \mathcal{P}(X^+)$. Then $A=i^+(A) \in \mathcal{P}(\overline{X}^+)$. Since $\overline{X}$ respects $\mathcal{P}$, we have $A\in \mathcal{P}(\overline{X})$. Hence $A\in \mathcal{P}(X)$ as well. Thus  $X$  respects $\mathcal{P}$.
\end{proof}
Assume that a subcategory $\pmb{\mathbb{A}}$ of $\mathbf{MAP}$ is closed under taking of completions. Then Proposition \ref{t-cRes} shows that $\mathcal{P}\in \mathfrak{P}_0$ is a respected property in  $\pmb{\mathbb{A}}$ if and only if every {\it complete } $X\in \pmb{\mathbb{A}}$ respects $\mathcal{P}$.


The next proposition is an analogue of Proposition 2.1 in \cite{ReT}:
\begin{proposition} \label{pProdSchur}
Let $\{ X_i\}_{i\in I}$ be a non-empty family of MAP Abelian groups and $X:=\prod_{i\in I} X_i$.  Then
\begin{enumerate}
\item[{\rm (i)}] {\rm (\cite{ReT})} $X$ respects compactness  if and only if $X_i$  respects compactness for every $i\in I$.
\item[{\rm (ii)}] $X$ has the Schur property if and only if $X_i$ has the Schur property for every $i\in I$.
\item[{\rm (iii)}] $X$ respects functional boundedness  if and only if $X_i$  respects functional boundedness for every $i\in I$.
\item[{\rm (iv)}] $X$ has the $SB$-property if and only if  $X_i$  has the $SB$-property for every $i\in I$.
\item[{\rm (v)}] If $X$ respects pseudocompactness (respectively, countable compactness or sequential  compactness),  then $X_i$  respects  pseudocompactness (respectively, countable compactness or sequential  compactness) for every $i\in I$.
\end{enumerate}
\end{proposition}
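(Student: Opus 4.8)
The whole argument rests on one structural identification, which I would establish first: the Bohr functor commutes with arbitrary products, i.e. $X^+=\prod_{i\in I}X_i^+$ as topological groups. Indeed, a continuous character of $X=\prod_i X_i$ depends on only finitely many coordinates, so $\widehat{X}=\bigoplus_{i\in I}\widehat{X_i}$, and this character group is generated by the one-coordinate characters $\chi_i\circ p_i$ $(\chi_i\in\widehat{X_i})$. Hence $\sigma(X,\widehat{X})$ is the coarsest group topology making every $\chi_i\circ p_i$ continuous, which is precisely the initial topology of the projections $p_i\colon X\to X_i^+$, namely the product topology on $\prod_i X_i^+$. Alongside this I would record that each coordinate subgroup $\iota_i(X_i)\le X$ is dually closed and dually embedded --- a character of $X_i$ extends through $p_i$, while a point off $\iota_i(X_i)$ is separated by a character supported on a nontrivial coordinate --- so by Lemma \ref{lBohr}(5) the inclusion $\iota_i\colon X_i^+\hookrightarrow X^+$ is a topological embedding onto a closed subgroup, and $p_i\circ\iota_i=\mathrm{id}$ exhibits $X_i$ as a retract of $X$.

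I would dispatch every forward implication --- the nontrivial direction of (i)--(iv) as well as all of (v) --- in one stroke using this retract. Let $\mathcal{P}$ be a property from $\mathfrak{P}$ (or the $SB$-property) and suppose $X$ respects it. If $A\in\mathcal{P}(X_i^+)$, then, since $X_i^+$ carries the subspace topology inherited from $X^+$, one has $A\in\mathcal{P}(X^+)$: this is automatic for the intrinsic members of $\mathfrak{P}$, and for functional boundedness it holds because every continuous real function on $X^+$ restricts continuously to $X_i^+$, transferring boundedness on $A$ upward. As $X$ respects $\mathcal{P}$ we get $A\in\mathcal{P}(X)$, and the continuous projection $p_i$ returns $p_i(A)=A\in\mathcal{P}(X_i)$ --- by Lemma \ref{l-Boun}(2) for functional boundedness and by stability under continuous images for the remaining properties. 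For (iv) the same scheme is applied to functionally bounded sequences, invoking Fact \ref{f-Bohr}.

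The backward implications I would handle through $X^+=\prod_i X_i^+$ and coordinatewise reasoning; note that for (v) no converse is asserted, reflecting the known failure of countable and pseudocompactness to be productive. Item (i) is the Remus--Trigos-Arrieta argument: a compact $K\subseteq X^+$ has each $p_i(K)$ compact in $X_i^+$, hence compact in $X_i$ by hypothesis, so $\prod_i p_i(K)$ is compact in $X$ by Tychonoff; since $K$ is closed in $X$ (it is the $\mathrm{id}$-preimage of a closed subset of $X^+$) and $K\subseteq\prod_i p_i(K)$, it is compact in $X$. Item (ii) is the cleanest of all: convergence in a product is coordinatewise, so a sequence converging in $\prod_i X_i^+$ converges coordinatewise in each $X_i^+$, hence coordinatewise in each $X_i$ by the Schur property of the factors, hence in $X=\prod_i X_i$, the limit point being preserved.

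The crux, and the step I expect to be the real obstacle, is the backward direction of (iii) and (iv). Given $A$ functionally bounded in $X^+=\prod_i X_i^+$, Lemma \ref{l-Boun}(2) makes each $p_i(A)$ functionally bounded in $X_i^+$, hence in $X_i$ because $X_i$ respects functional boundedness (respectively has the $SB$-property, $A$ being a sequence). Passing to closures, $A\subseteq\prod_i\,\overline{p_i(A)}$ with each factor functionally bounded in $X_i$ by Lemma \ref{l-Boun}(1). What is then needed is that the product of functionally bounded subsets of topological groups is functionally bounded in the product group; granting this, $A$ is functionally bounded in $X$ as a subset of a functionally bounded set (Lemma \ref{l-Boun}(1)), settling both (iii) and (iv). This productivity statement is exactly where completeness is missing, so Corollary \ref{c-Bo} does not apply and one cannot simply reduce to compactness: Proposition \ref{p-Bo} reduces each coordinate to precompactness and precompactness of subsets of products is coordinatewise, but upgrading precompactness to functional boundedness in the product is the genuine content, which I would supply by a Comfort--Ross-type argument.
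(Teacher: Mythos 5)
Your proposal is correct and follows essentially the same route as the paper, with two differences worth recording. First, you establish the global identification $X^+=\prod_{i\in I}X_i^+$ (which is true: every continuous character of the product factors through finitely many coordinates, so $\widehat{X}=\bigoplus_i\widehat{X_i}$, and the two initial topologies coincide), whereas the paper gets by with less: it only uses that $\mathfrak{B}$ is finitely multiplicative, writing $X^+=X_j^+\times\bigl(\prod_{i\neq j}X_i\bigr)^+$ for the forward directions via the injections $\nu_j$, and Fact \ref{f-Bohr} (continuity of $\pi_i:X^+\to X_i^+$) for the backward ones. Your stronger lemma buys a cleaner uniform treatment (and makes your dually-closed/dually-embedded detour through Lemma \ref{lBohr}(5) actually redundant, since a coordinate subgroup of $\prod_i X_i^+$ is automatically a closed topological copy of $X_i^+$); the paper's weaker tools suffice because only one inclusion of topologies is ever needed in each direction. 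You also reprove the backward direction of (i), which the paper simply cites to Remus--Trigos-Arrieta \cite{ReT}; your argument is the standard one and is fine.

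The one place your write-up falls short of a complete proof is exactly the step you flagged as the crux: the productivity of functional boundedness, i.e., that $\prod_i \pi_i(A)$ is functionally bounded in $X=\prod_i X_i$ whenever each $\pi_i(A)$ is functionally bounded in $X_i$. You correctly diagnose that completeness is unavailable, that Corollary \ref{c-Bo} does not apply, and that precompactness alone does not upgrade to functional boundedness in the product; and your instinct about the lineage is right, since the needed statement generalizes the Comfort--Ross theorem on products of pseudocompact groups. But ``I would supply this by a Comfort--Ross-type argument'' is a promissory note, not an argument. The paper closes this step by citing Theorem 2.2 of \cite{Tka88} (Tkachenko), which is precisely the statement you need; substituting that citation for your promissory note makes your proof complete and then fully on par with the paper's.
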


\begin{proof}
Denote by $\pi_i$ the projection from $X$ onto $X_i$. Define $\nu_j :X_j \to X$ as follows:
\[
\nu_j (x_j):= (x_i), \mbox{ where } x_i = x_j \mbox{ if } i=j, \mbox{ and } x_i =0 \mbox{ otherwise}.
\]

(ii) Let $X$ have the Schur property. Fix arbitrarily an index $j\in I$. We have to show that $X_j$  has the Schur property. Let a sequence $\{ x^n_j\}_{n\in\omega}$ in $X_j$ converge to zero in $X_j^+$. For every $n\in\NN$ we set $\mathbf{x}_n =\nu_j (x^n_j)$. Then for each $\chi\in \widehat{X}$, we have $(\chi, \mathbf{x}_n)=(\chi|_{X_j}, x^n_j)\to 1$. Thus $\mathbf{x}_n \to 0$ in $\sigma(X,\widehat{X})$. Since $X$ has the Schur property, $\mathbf{x}_n \to 0$ in the product topology on $X$. Hence $x^n_j = \pi_j (\mathbf{x}_n) \to 0$ in $X_j$. Thus $X_j$ has the Schur property.

Conversely, let  $X_i$ have the Schur property for every $i\in I$, and let $\{ \mathbf{x}_n\}_{n\in\omega}$ be a null sequence in $X^+$. By Fact \ref{f-Bohr}, $\pi_i (\mathbf{x}_n) \to 0$ in $X_i^+$ for every $i\in I$. Since $X_i$ has the Schur property, $\pi_i (\mathbf{x}_n) \to 0$ in $X_i$ for every $i\in I$. Now, for every $U\in \mathcal{N}(X)$ of the form
\[
U= U_{i_1} \times \dots \times U_{i_t} \times \prod_{i\in I\setminus \{i_1,\dots, i_t\}} X_i , \mbox{ where } U_{i_k}\in \mathcal{N}(X_{i_k}) \mbox{ and } 1\leq k\leq t,
\]
choose $m\in \NN$ such that $\pi_{i_k}(\mathbf{x}_n)\in U_{i_k}$ for every $n\geq m$ and each $1\leq k\leq t$. Then $\mathbf{x}_n \in U$ for every $n\geq m$. This means that $\mathbf{x}_n \to 0$ in $X$. Thus  $X$ has the Schur property.

(iii),(iv) Let  $X$ respect functional boundedness (respectively, $X$  have the $SB$-property). Fix arbitrarily an index $j\in I$. We have to show that $X_j$ respects functional boundedness (respectively, $X_j$ has  the $SB$-property). Let $A_j$ be a functionally bounded subset (respectively, sequence) in $X_j^+$. Set $A := \nu_j(A_j)$.
Then $A$ is functionally bounded in $X_j^+ \times \left(\prod_{i\in I, i\not=j} X_i\right)^+ = X^+$  by Lemma  \ref{l-Boun} and  Fact \ref{f-Bohr}. By assumption, $A$ is  functionally bounded in $X$. Hence $\pi_j (A)=A_j$ is  functionally bounded in $X_j$. Thus $X_j$ respects  functional boundedness  (respectively, $X_j$  has the $SB$-property).

Conversely, let  $X_i$  respect  functional boundedness  (respectively,   $X_i$  have the $SB$-property) for every $i\in I$, and let $A$ be a functionally bounded subset (respectively, sequence) in $X^+$. By   Lemma \ref{l-Boun} and  Fact \ref{f-Bohr}, the set $\pi_i (A)$ is  functionally bounded in $X^+_i$, and hence in $X_i$, for every $i\in I$. Now Theorem 2.2 of \cite{Tka88} implies that the set $E:= \prod_{i\in I} \pi_i (A)$ is  functionally bounded in $X$. Since $A\subseteq E$, $A$ is also  functionally bounded in $X$ by Lemma \ref{l-Boun}. Thus $X$ respects  functional boundedness  (respectively, $X$ has  the $SB$-property).



(v)  We consider only the pseudocompact case. Fix arbitrarily an index $j\in I$. We have to show that $X_j$ respects  pseudocompactness. Taking into account that each topological group is Tychonoff, we note that   the image of a pseudocompact  subset under a continuous homomorphism is pseudocompact \cite[3.10.24]{Eng}. Now let  $A$ be a pseudocompact subset in $X_j^+$. Then $\nu_j (A)$ is a pseudocompact subset of $X^+ = X_j^+ \times  \left(\prod_{i\in I, i\not=j} X_i\right)^+ $. Since $X$  respects  pseudocompactness, $A$ is pseudocompact in $X$. Now $\pi_j(\nu_j(A))=A$ is  pseudocompact in $X_j$.
\end{proof}

In the next proposition we consider heredity of respected properties. Note that,  for the Schur property, item (i) was noticed in \cite[\S 2]{MT}, and, for respect compactness, this item is folklore (for dually embedded subgroups it is proved in \cite[2.4]{ReT}):

\begin{proposition} \label{pSubSchur}
Let $H$ be a subgroup of a MAP Abelian group $X$ and $\mathcal{P}\in \mathfrak{P}_0$.
\begin{enumerate}
\item[{\rm (i)}] If $X$ respects $\mathcal{P}$, then $H$ respects $\mathcal{P}$ as well.
\item[{\rm (ii)}]  If $H$ is open and   respects $\mathcal{P}$, then $X$   respects $\mathcal{P}$.
\end{enumerate}
\end{proposition}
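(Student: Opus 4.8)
The plan is to work throughout with the reformulation that, for each $\mathcal{P}\in\mathfrak{P}_0$, the identity map $X\to X^+$ is continuous, so the inclusion $\mathcal{P}(X)\subseteq \mathcal{P}(X^+)$ always holds: each of the five properties passes to a coarser topology (compactness, countable and sequential compactness, and pseudocompactness are inherited when we coarsen, and convergence of a sequence is trivially preserved). Hence ``$Y$ respects $\mathcal{P}$'' is equivalent to the single reverse inclusion $\mathcal{P}(Y^+)\subseteq\mathcal{P}(Y)$, and this is what I would verify in each of (i) and (ii). The two comparisons I would lean on are Lemma \ref{lBohr}(3), giving $\sigma(X,\widehat{X})|_H\le\sigma(H,\widehat{H})$ for part (i), and, for part (ii), Fact \ref{fOpen} together with Lemma \ref{lBohr}(1),(4): since $H$ is open it is dually closed and dually embedded, so $H$ is closed in $X^+$ and the topology induced on $H$ by $X^+$ coincides with the intrinsic Bohr topology of $H^+$.

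For (i) I would take $A\in\mathcal{P}(H^+)$ and chase it through three topologies on $A$: the one induced from $X^+$, the one from $H^+$, and the original one induced from $X$ (which equals the one induced from $H$). By Lemma \ref{lBohr}(3) the $X^+$-induced topology is coarser than the $H^+$-topology, so coarsening preserves $\mathcal{P}$ and yields $A\in\mathcal{P}(X^+)$. Since $X$ respects $\mathcal{P}$ this gives $A\in\mathcal{P}(X)$; as the subspace topology on $A\subseteq H$ inherited from $X$ is exactly that inherited from $H$, we conclude $A\in\mathcal{P}(H)$. Thus $\mathcal{P}(H^+)\subseteq\mathcal{P}(H)$ and $H$ respects $\mathcal{P}$.

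For (ii) the heart of the matter is that $H$, though open in $X$, need not be open in $X^+$, so one cannot simply restrict to a neighbourhood of the identity. Instead I would use that any $A\in\mathcal{P}(X^+)$ is functionally bounded in $X^+$ by Diagram \ref{diag}, and then invoke Lemma \ref{l-FbMAP} to confine $A$ to finitely many, hence (discarding repetitions) pairwise disjoint, cosets $g_1+H,\dots,g_t+H$. Writing $A_k:=A\cap(g_k+H)$, each $A_k$ is closed in $A$ (because $H$, being dually closed, is closed in $X^+$), and since finitely many pairwise disjoint closed sets cover $A$, each $A_k$ is in fact clopen in $A$. Clopenness is precisely what lets pseudocompactness descend to $A_k$, while closedness handles $\mathcal{C},\mathcal{CC},\mathcal{SC}$ directly; so $A_k\in\mathcal{P}(X^+)$. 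Translating by $-g_k$ (a homeomorphism of $X^+$) moves $A_k$ inside $H$, where the $X^+$-topology agrees with $H^+$ by dual embeddedness; thus $-g_k+A_k\in\mathcal{P}(H^+)$, and since $H$ respects $\mathcal{P}$ we get $-g_k+A_k\in\mathcal{P}(H)$, whence $A_k\in\mathcal{P}(X)$ after translating back in $X$. Finally $A=\bigcup_k A_k$ is a finite union, and $\mathcal{C},\mathcal{CC},\mathcal{SC},\mathcal{PC}$ are all preserved by finite unions, giving $A\in\mathcal{P}(X)$.

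The property $\mathcal{S}$ needs one extra observation, since a finite union of convergent sequences is not itself a convergent sequence. Here, if $S=\{x_n\}\cup\{x\}$ converges to $x$ in $X^+$, the clopen piece $S_1:=S\cap(x+H)$ is an open subset of $S$ containing the limit $x$, so it contains all but finitely many $x_n$; thus the tail of the sequence lies in $x+H$. Shifting by $-x$ places this tail inside $H$ as a sequence converging to $0$ in $H^+$, respectedness of $H$ makes it converge to $0$ in $H$ hence in $X$, and therefore $x_n\to x$ in $X$. I expect this coset-splitting step for part (ii) --- in particular the upgrade from ``closed in $A$'' to ``clopen in $A$'' that rescues pseudocompactness, together with the tail argument for $\mathcal{S}$ --- to be the main obstacle, the rest being bookkeeping of which Bohr topology is induced on a given subset.
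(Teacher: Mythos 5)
Your proposal is correct and follows the same route as the paper: part (i) via the continuity of the identity $H^+\to X^+$ given by Lemma \ref{lBohr}(3), and part (ii) via functional boundedness, Lemma \ref{l-FbMAP}, and the dual closedness/embeddedness of the open subgroup $H$ (Fact \ref{fOpen} and Lemma \ref{lBohr}). The only difference is one of detail: the paper compresses your entire coset-splitting step into a single ``without loss of generality we can assume $K\subseteq H^+$,'' whereas you verify the points this glosses over --- that the pieces $A_k$ are clopen (not merely closed) in $A$, which is exactly what pseudocompactness needs, that $\mathcal{S}$ requires the separate tail argument since a finite union of convergent sequences is not one, and that the remaining properties recombine under finite unions.
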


\begin{proof}
(i) Let $K\in \mathcal{P}(H^+)$. Denote by $i: H^+ \to X^+$ the identity map.
By Lemma \ref{lBohr}(3), $i$ is continuous. So $K=i(K)\in \mathcal{P}(X^+)$  (note that Hausdorff topological groups are Tychonoff). Hence $K\in \mathcal{P}(X)$. Thus $K\in \mathcal{P}(H)$. Therefore $H$  respects $\mathcal{P}$.

(ii) Let $K\in \mathcal{P}(X^+)$. Since $K$ is also functionally bounded in $X^+$, $K$ is contained in a finite union of cosets of $H$ by Lemma \ref{l-FbMAP}.  Since $H$ is open, Fact \ref{fOpen} and Lemma \ref{lBohr}(4) imply that  $H$ is a closed subgroup of $X^+$. Hence, without loss of generality, we can assume that $K\subseteq H^+$ and $K\in \mathcal{P}(H^+)$. As  $H$  respects  $\mathcal{P}$, $K\in \mathcal{P}(H)$, and hence $K\in \mathcal{P}(X)$. Thus $X$ respects  $\mathcal{P}$.
\end{proof}

For  functional boundedness and the $SB$-property the situation is more complicated. Recall that a subspace $E$ of a topological space $Y$ is called {\it $C$-embedded} (respectively, {\it $C^\ast$-embedded}) in $Y$ if every continuous function (respectively, every bounded continuous function) on $E$ can be extended to a continuous function (respectively, to a bounded continuous function) on $Y$.
\begin{proposition} \label{pSubFB}
Let $H$ be a subgroup of a  MAP Abelian group $X$.
\begin{enumerate}
\item[{\rm (i)}] If $X$ respects  functional boundedness and $H$ is $C$-embedded in $X$, then $H$ also respects  functional boundedness.
\item[{\rm (ii)}]  Let $H$ be open and respect  functional boundedness. If $H^+$  is $C$-embedded in $X^+$, then $X$  respects  functional boundedness.
\item[{\rm (iii)}] If $X$ has  the $SB$-property and $H$ is $C$-embedded in $X$, then $H$ also has   the $SB$-property.
\item[{\rm (iv)}]  Let $H$ be open and  have  the $SB$-property. If $H^+$  is $C$-embedded in $X^+$, then $X$   has   the $SB$-property.
\end{enumerate}
\end{proposition}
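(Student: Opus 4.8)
The plan is to exploit a basic asymmetry of functional boundedness under restriction and extension of real-valued functions, and to treat the four items in two pairs: the ``descent'' items (i) and (iii), which pass a property from $X$ down to the subgroup $H$, and the ``ascent'' items (ii) and (iv), which lift a property from $H$ up to $X$. Throughout I shall use that if $B\subseteq S\subseteq Y$ and $B$ is functionally bounded in the subspace $S$, then $B$ is functionally bounded in $Y$, since every continuous $f:Y\to\mathbb{R}$ restricts to a continuous function on $S$ that is bounded on $B$. The converse passage, from functional boundedness in $Y$ to functional boundedness in a subspace $S$, fails in general, and a $C$-embedding is exactly the device that repairs it: if $S$ is $C$-embedded in $Y$, then every continuous $g:S\to\mathbb{R}$ extends over $Y$ and is therefore bounded on $B$ whenever $B$ is functionally bounded in $Y$.

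For (i) and (iii) let $A$ be a functionally bounded subset (respectively, sequence) of $H^+$. The inclusion $H\hookrightarrow X$ is a continuous homomorphism of MAP groups, so by Fact \ref{f-Bohr} it is continuous as a map $H^+\to X^+$; hence $A$ is functionally bounded in $X^+$ by Lemma \ref{l-Boun}, and it is still a sequence in the $SB$-case. Since $X$ respects functional boundedness (respectively, has the $SB$-property), $A$ is functionally bounded in $X$. Finally, as $H$ is $C$-embedded in $X$, every continuous real-valued function on $H$ extends to $X$ and is therefore bounded on $A$, so $A$ is functionally bounded in $H$. For (i) the reverse inclusion $\mathcal{FB}(H)\subseteq\mathcal{FB}(H^+)$ is automatic from continuity of the identity $H\to H^+$, which gives $\mathcal{FB}(H)=\mathcal{FB}(H^+)$; for (iii) the one-directional conclusion is already the definition of the $SB$-property.

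For (ii) and (iv) let $A$ be a functionally bounded subset (respectively, sequence) of $X^+$, and suppose $A$ is to be shown functionally bounded in $X$. Since $H$ is open, Lemma \ref{l-FbMAP} places $A$ inside a finite union of cosets $\bigcup_{k=1}^{n}(a_k+H)$, so setting $A_k:=A\cap(a_k+H)$ reduces the problem, via Lemma \ref{l-Boun}, to each $A_k$ (which remains a sequence in the $SB$-case, being a subset of $A$). Translating by $-a_k$, a homeomorphism of both $X$ and $X^+$, yields $B_k:=-a_k+A_k\subseteq H$, still functionally bounded in $X^+$. Because $H$ is open it is dually embedded by Fact \ref{fOpen}, so Lemma \ref{lBohr}(4) identifies $H^+$ with the subspace $H$ of $X^+$; the hypothesis that $H^+$ is $C$-embedded in $X^+$ then transfers functional boundedness of $B_k$ from $X^+$ into $H^+$. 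Applying that $H$ respects functional boundedness (respectively, has the $SB$-property) makes $B_k$ functionally bounded in $H$, hence in $X$ by the free subspace-to-ambient passage; translating back and taking the finite union returns $A$ functionally bounded in $X$. The opposite inclusion is again automatic, so (ii) and (iv) follow.

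The only genuinely delicate point, and the step I would verify most carefully, is each passage from the ambient group to the subgroup, where functional boundedness is not inherited by subspaces; the argument is arranged so that every such passage is licensed by a $C$-embedding hypothesis ($H$ in $X$ for (i) and (iii), $H^+$ in $X^+$ for (ii) and (iv)). A secondary bookkeeping subtlety in the $SB$-items is confirming that intersecting with a coset, translating, and restricting to $H$ all preserve countability of the original sequence, which holds because each operation only passes to a subset or to a homeomorphic image.
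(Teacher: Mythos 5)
Your proof is correct and takes essentially the same route as the paper: for (i) and (iii) you descend via continuity of the identity $H^+\to X^+$ (your appeal to Fact \ref{f-Bohr} is interchangeable with the paper's use of Lemma \ref{lBohr}(3)) followed by the $C$-embedding of $H$ in $X$, and for (ii) and (iv) you invoke Lemma \ref{l-FbMAP}, Fact \ref{fOpen} and Lemma \ref{lBohr} exactly as the paper does, your explicit coset-translation argument merely spelling out the paper's ``without loss of generality we can assume $A\subseteq H$.'' No gaps; the extra care about reverse inclusions and countability in the $SB$-items is sound bookkeeping that the paper leaves implicit.
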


\begin{proof}
(i),(iii) Let $A$ be a functionally bounded subset (respectively, sequence) in $H^+$. As $\sigma(X,\widehat{X})|_H \leq  \sigma(H,\widehat{H})$ by Lemma \ref{lBohr}, $A$ is also   functionally bounded in $X^+$. Hence, by assumption, $A$ is    functionally bounded in $X$. Since $H$ is $C$-embedded, $A$ is functionally bounded in $H$. Thus $H$  respects  functional boundedness (respectively, $H$ has   the $SB$-property).

(ii),(iv) Let  $A$ be a functionally bounded subset (respectively, sequence) in $X^+$. By Lemma \ref{l-FbMAP}, $A$ is contained in a finite union of cosets of $H$. Hence, by Lemma \ref{l-Boun}(1), without loss of generality we can assume that $A\subseteq H$.

Fact \ref{fOpen} and Lemma \ref{lBohr} imply that $H^+$ is a closed subgroup of $X^+$. Since  $H^+$  is $C$-embedded in $X^+$, $A$ is functionally bounded in $H^+$. So, by assumption, $A$ is functionally bounded in $H$. Thus $A$ is functionally bounded in $X$. Therefore, $X$  respects  functional boundedness (respectively, $X$ has   the $SB$-property).
\end{proof}
We do not know whether it is possible to weaken conditions on $H$ and $H^+$ in Proposition \ref{pSubFB}. Note that by Lemma \ref{lBohr}(5), a subgroup $H$ of a MAP Abelian group $X$ is dually closed and dually embedded if and only if $H^+$ is a closed subgroup of $X^+$. So the next general question is of interest:
\begin{problem} \label{problemBohr}
Let $X$ be a MAP Abelian group. Characterize those dually closed and dually embedded subgroups $H$ of $X$ such that $H^+$ is $C$-embedded in $X^+$.
\end{problem}
If $G$ is a LCA group, it is well-known that every closed subgroup $S$ of $G$ is dually closed and dually embedded in $G$. Moreover, $S^+$ is automatically $C$-embedded in $G^+$ by Theorem 5.6 of \cite{CHTr}. It is known (see \cite[8.3 and 8.6]{Ban}) that every closed subgroup of a nuclear group is dually closed and dually embedded. This justifies the next question which is a partial case of \cite[Problem 7.5]{CHTr}:
\begin{problem}
Let $H$ be a closed subgroup of a nuclear group $X$. Is $H^+$ $C$-embedded in $X^+$?
\end{problem}
It is also interesting to consider Problem \ref{problemBohr} for Schwartz groups. In the next question we reformulate and extend Problem \ref{problemBohr}:
\begin{problem}
Characterize $C$-embedded and $C^\ast$-embedded closed subgroups of precompact (Abelian) groups.
\end{problem}

\section{Proof of Theorem \ref{t11}} \label{secF}

We prove  Theorem \ref{t11} by a reduction of the general case to several simpler ones using the structure theory of LCA groups and the results of the previous section.
We need some lemmas.

It is well-known that the space $c_0$ does not have the Schur property. Below we generalize this fact.
\begin{lemma} \label{l-c_0}
The space $c_0=\mathfrak{F}_0(\mathbb{R})$ does not respect any property from $\mathfrak{P}$.
\end{lemma}

\begin{proof}
By Proposition \ref{p-NoRes} it is enough to find a convergent sequence in $X^+$ which is not functionally bounded in $X$.
Set $e_0 :=0\in c_0$ and $e_n :=\nu_n(1)$ for every $n\in\NN$.
We claim  that $e_n \to e_0$  in $c_0^+$. Indeed, for every $\mathbf{s}=(s_i)_{i\in\NN} \in \ell_1 =c_0^\wedge$, we have $(\mathbf{s}, e_n)=s_n \to 0$. Thus $e_n \to e_0$ in $c_0^+$.

On the other hand, the set  $K:= \{ e_n\}_{n\in\omega}$  is not  functionally bounded in $c_0$. Indeed, since $\| e_n - e_m \| \geq 1$ for all distinct $n,m\in\omega$, the set  $K$ is uniformly discrete in $c_0$. By Lemma \ref{l-FB}, the set $K$ is not  functionally bounded in $c_0$.
\end{proof}

Next proposition generalizes Lemma \ref{l-c_0}.
\begin{proposition}
If $L$ is a nontrivial real LCS, then $\mathfrak{F}_0(L)$ does not respect any property from $\mathfrak{P}$.
\end{proposition}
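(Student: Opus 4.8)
By Proposition \ref{p-NoRes}, it suffices to produce a sequence in $\mathfrak{F}_0(L)^+$ that converges to a point of $\mathfrak{F}_0(L)$ yet fails to be functionally bounded in $\mathfrak{F}_0(L)$. The plan is to transport the witnessing sequence $\{e_n\}$ of Lemma \ref{l-c_0} into $\mathfrak{F}_0(L)$ along a one-dimensional embedding of $\mathbb{R}$, and to exploit functoriality of $\mathfrak{F}_0$ in order to control the Bohr topology, thereby avoiding any direct computation of $\widehat{\mathfrak{F}_0(L)}$.

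First I would fix $v\in L\setminus\{0\}$ (possible since $L$ is nontrivial) and consider the continuous homomorphism $\iota:\mathbb{R}\to L$, $\iota(t)=tv$; by Hahn--Banach it is in fact a topological embedding, but only its continuity will be needed. Since $L$ is a Hausdorff LCS it is MAP, so $\mathfrak{F}_0(L)$ is MAP by Fact \ref{fTopPropF}(i) and Proposition \ref{p-NoRes} is applicable. Applying the functor $\mathfrak{F}_0$ to $\iota$ produces a continuous homomorphism $j:=\mathfrak{F}_0(\iota):c_0=\mathfrak{F}_0(\mathbb{R})\to\mathfrak{F}_0(L)$, and by construction $j(e_n)=\nu_n(v)$, where $e_n=\nu_n(1)$ are the elements from Lemma \ref{l-c_0}.

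Next, from Lemma \ref{l-c_0} we know $e_n\to 0$ in $c_0^+$. I would then invoke Fact \ref{f-Bohr} to conclude that $j:c_0^+\to\mathfrak{F}_0(L)^+$ is again continuous, whence $\nu_n(v)=j(e_n)\to j(0)=0$ in $\mathfrak{F}_0(L)^+$. This supplies a sequence in the Bohr topology of $\mathfrak{F}_0(L)$ converging to the point $0\in\mathfrak{F}_0(L)$.

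Finally, to see that $\{\nu_n(v)\}_{n\in\NN}$ is not functionally bounded in $\mathfrak{F}_0(L)$, I would use the Hausdorffness of $L$ to pick $U\in\mathcal{N}(L)$ with $v\notin U$ and set $\mathbf{U}:=U^\NN\cap c_0(L)\in\mathcal{N}(\mathfrak{F}_0(L))$. Since $\nu_n(v)-\nu_m(v)$ carries $v$ in coordinate $n$ whenever $n\neq m$, it never lies in $\mathbf{U}$, so $\{\nu_n(v)\}$ is $\mathbf{U}$-separated, hence infinite and uniformly discrete; Lemma \ref{l-FB} then shows it is not functionally bounded in $\mathfrak{F}_0(L)$, and adjoining the limit $0$ changes nothing by Lemma \ref{l-Boun}(1). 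Proposition \ref{p-NoRes} now delivers the claim. The only delicate step is the claimed convergence $\nu_n(v)\to 0$ in $\mathfrak{F}_0(L)^+$: this is exactly where functoriality of $\mathfrak{F}_0$ together with Fact \ref{f-Bohr} does the work that a direct description of the dual group $\widehat{\mathfrak{F}_0(L)}$ would otherwise demand.
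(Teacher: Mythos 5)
Your proof is correct, but it takes a genuinely different route from the paper's. The paper argues structurally: since $L\neq\{0\}$, Hahn--Banach splits off a line, $L=\mathbb{R}\oplus L_0$ with $L_0$ a closed subspace (\cite[\S 20.5(5)]{Kothe}), so that $\mathfrak{F}_0(L)\cong c_0\times \mathfrak{F}_0(L_0)$ by Fact \ref{p11}(1), and then Proposition \ref{pProdSchur} (a product respects a property from $\mathfrak{P}$ only if each factor does) reduces everything to Lemma \ref{l-c_0}. You instead transport the witness: you use only the Bohr-null half of Lemma \ref{l-c_0} (the $\ell_1$-duality computation giving $e_n\to 0$ in $c_0^+$), push it into $\mathfrak{F}_0(L)$ along $j=\mathfrak{F}_0(\iota)$ with Fact \ref{f-Bohr} guaranteeing Bohr-continuity of $j$, re-prove the failure of functional boundedness by hand via uniform discreteness and Lemma \ref{l-FB}, and finish with Proposition \ref{p-NoRes}. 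This trades the complementation theorem and the product machinery for functoriality of $\mathfrak{F}_0$ and $\mathfrak{B}$, and it mirrors the paper's own proof of Lemma \ref{l-F_0} --- except that there the Bohr-nullity of $\nu_n(x)$ is checked against the explicit dual description of Fact \ref{f3}(2), whereas your Fact \ref{f-Bohr} device avoids any dual computation, exactly as you note. What your route buys is generality: verbatim, it shows that $\mathfrak{F}_0(X)$ respects no property from $\mathfrak{P}$ whenever a MAP Abelian group $X$ admits a nontrivial continuous one-parameter subgroup $\mathbb{R}\to X$ (e.g.\ $X=\TT$, recovering part of Theorem \ref{t12}(ii) without Fact \ref{f3}); what the paper's route buys is brevity and the localization of all work in reusable general statements. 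Two small points to tidy: with the paper's definition of $U$-separatedness, namely $(a+U)\cap(b+U)=\emptyset$, you should choose a symmetric $U$ with $v\notin U+U$ rather than merely $v\notin U$ (the fix is immediate by Hausdorffness, and the paper itself is equally loose in Lemmas \ref{lNull2} and \ref{l-F_0}); and the application of Fact \ref{f-Bohr} needs \emph{both} groups MAP, so record that $c_0$, being a metrizable locally convex space, is MAP.
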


\begin{proof}
Since $L\not= \{ 0\}$ we can represent $L$ in the form $L=\mathbb{R}\oplus L_0$, where $L_0$ is a closed subspace of $L$ \cite[\S 20.5(5)]{Kothe}. So $\mathfrak{F}_0(L)\cong c_0 \oplus \mathfrak{F}_0(L_0)$ by Fact \ref{p11}. Now the assertion follows from Proposition \ref{pProdSchur} and Lemma \ref{l-c_0}.
\end{proof}

The connected component of a topological group $X$ we denote by $C(X)$.

\begin{lemma} \label{l-F_0}
Let $X$ be a LCA group containing an open compact subgroup. If $C(X)\not= \{ 0\}$, then $\mathfrak{F}_0(X)$  does not respect  any property from $\mathfrak{P}$.
\end{lemma}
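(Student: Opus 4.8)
The plan is to reduce everything to Proposition \ref{p-NoRes}: it suffices to exhibit a sequence in $\mathfrak{F}_0(X)^+$ that converges to $0$ yet fails to be functionally bounded in $\mathfrak{F}_0(X)$. This is exactly the mechanism already used for $c_0=\mathfrak{F}_0(\mathbb{R})$ in Lemma \ref{l-c_0}, where the ``unit vectors'' $e_n=\nu_n(1)$ converge to $0$ in the Bohr topology (because every $\ell_1$-functional decays) while remaining uniformly discrete in norm. My goal is to reproduce the same picture, with the role of the $\ell_1$-decay now played by the fact that characters of $\mathfrak{F}_0(X)$ are eventually trivial on the open subgroup that contains the connected component.

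First I would fix a nonzero element $a\in C(X)$, available since $C(X)\neq\{0\}$, and set $\mathbf{x}_k:=\nu_k(a)\in\mathfrak{F}_0(X)$, the finitely supported sequence with $a$ in the $k$-th slot and $0$ elsewhere. Since $X$ carries an open compact subgroup we are in the case $n=0$, $X=X_0$ of Fact \ref{f3}, so every character $\mathbf{g}=(g_n)\in\mathfrak{F}_0(X)^\wedge$ satisfies $g_n\in H^\perp$ for all $n\geq m$, for some open subgroup $H\leq X$ and some $m$, and acts by $(\mathbf{g},\mathbf{x})=\lim_n\prod_{i=1}^n(g_i,x_i)$; in particular $(\mathbf{g},\mathbf{x}_k)=(g_k,a)$. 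The key observation is that $C(X)$ is contained in every open subgroup of $X$: indeed $C(X)$ is connected, contains $0\in H$, and the cosets of $H$ form an open (hence clopen) partition of $X$, forcing $C(X)\subseteq H$. Consequently $a\in H$, so $(g_k,a)=1$ for all $k\geq m$; thus $(\mathbf{g},\mathbf{x}_k)\to 1$ for every character $\mathbf{g}$, i.e. $\mathbf{x}_k\to 0$ in $\mathfrak{F}_0(X)^+$.

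On the other hand, choosing a neighbourhood $V\in\mathcal{N}(X)$ with $a\notin V$, the neighbourhood $U:=V^\NN\cap c_0(X)$ separates the $\mathbf{x}_k$: the $k$-th coordinate of $\mathbf{x}_k-\mathbf{x}_j$ equals $a\notin V$ for $k\neq j$, so $\{\mathbf{x}_k\}$ is uniformly discrete in $\mathfrak{F}_0(X)$, and by Lemma \ref{l-FB} it is not functionally bounded. Proposition \ref{p-NoRes} then finishes the proof. The conceptual crux, and the step I would treat most carefully, is this interplay between $C(X)$ and the open subgroups: although $X$ itself has the Schur property by Glicksberg's theorem, so that no nontrivial Bohr-null sequence can be uniformly discrete in $X$, this rigidity disappears in $\mathfrak{F}_0(X)$ precisely because each character acts nontrivially on only finitely many coordinates, after which it annihilates the open subgroup containing $a$. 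Making this ``eventual annihilation'' rigorous relies on the dual description of Fact \ref{f3}(2) together with the containment $C(X)\subseteq\bigcap\{H:H\text{ open subgroup of }X\}$; the remaining verifications are routine.
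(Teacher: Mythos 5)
Your proposal is correct and follows essentially the same route as the paper's own proof: the same test sequence $\mathbf{x}_k=\nu_k(x)$ with $0\neq x\in C(X)$, Bohr-convergence to $0$ via the dual description in Fact \ref{f3}(2) combined with the containment $C(X)\subseteq H$ for every open subgroup $H$, and non-functional-boundedness via uniform discreteness, Lemma \ref{l-FB} and Proposition \ref{p-NoRes}. The only (welcome) difference is that you spell out why $C(X)$ lies in every open subgroup, a step the paper leaves implicit.
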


\begin{proof}
We use  Proposition \ref{p-NoRes}.
Take arbitrarily a non-zero $x\in C(X)$. Set $\mathbf{x}_0 :=0\in \mathfrak{F}_0(X)$ and $\mathbf{x}_n :=\nu_n(x)$ for every $n\in\NN$. If a symmetric $U\in \mathcal{N}(X)$ is such that $x\not\in U$, then $K:= \{ \mathbf{x}_n\}_{n\in\omega}$ is $(U^\NN \cap c_0(X))$-separated. Thus $K$ is not functionally bounded in $\mathfrak{F}_0(X)$ by Lemma \ref{l-FB}.
So to prove the lemma it is enough to show that $\mathbf{x}_n \to \mathbf{x}_0$  in $\mathfrak{F}_0(X)^+$.

Let $\mathbf{g} =(g_n)_{n\in\NN} \in \widehat{\mathfrak{F}_0(X)}$. By Fact \ref{f3}(2),  there exists an open subgroup $H$ of $X$ and a natural number $m$ such that $g_n \in H^\perp$ for every $n\geq m$. Noting that $C(X)\subseteq H$, we obtain that $(\mathbf{g}, \mathbf{x}_n)=(g_n, x)=1$ for every $n\geq m$. Thus $\mathbf{x}_n \to \mathbf{x}_0$ in $\mathfrak{F}_0(X)^+$. 
\end{proof}

In spite of the next two lemmas  follows from  \cite[Theorem]{BaMP2} and Proposition \ref{pSubT}(ii), we give here their independent proofs.
\begin{lemma} \label{l-FbTD}
Let $X$ be a totally disconnected LCA group. If $E$ is a functionally bounded subset in $\mathfrak{F}_0(X)^+$,  then $E$ is functionally bounded also in $\mathfrak{F}_0(X)$. Moreover, $\mathrm{cl}_{\mathfrak{F}_0(X)} (E)$ is a compact subset of $\mathfrak{F}_0(X)$ and $\mathfrak{F}_0(X)^+$.
\end{lemma}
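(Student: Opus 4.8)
The plan is to prove that $\mathrm{cl}_{\mathfrak{F}_0(X)}(E)$ is compact in $\mathfrak{F}_0(X)$; functional boundedness of $E$ in $\mathfrak{F}_0(X)$ (a compact set is functionally bounded by Lemma \ref{l-Boun}(1), and so is every subset of it) and compactness of $\mathrm{cl}_{\mathfrak{F}_0(X)}(E)$ in the coarser topology $\mathfrak{F}_0(X)^+$ (the identity map $\mathfrak{F}_0(X)\to\mathfrak{F}_0(X)^+$ is continuous) will then follow immediately. Write $G:=\mathfrak{F}_0(X)$. Since $X$ is a totally disconnected LCA group, by van Dantzig's theorem it has a base at $0$ consisting of compact open subgroups, so $X$ has a subgroup topology; by Proposition \ref{pSubT}(i) so does $G$, with basic neighborhoods of the form $W=U^\NN\cap c_0(X)$, where $U$ is an open subgroup of $X$, each such $W$ being an \emph{open subgroup} of $G$. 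To produce the compact set containing $E$ I would aim, via Fact \ref{p11}(3), to show that $\{\pi_n(E)\}_{n\in\NN}$ is a null-sequence of relatively compact subsets of $X$.

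First I would handle relative compactness of the projections. For each $n$, the map $\pi_n\colon G^+\to X^+$ is continuous by Fact \ref{f-Bohr}, so $\pi_n(E)$ is functionally bounded in $X^+$ by Lemma \ref{l-Boun}(2). As $X$ is an LCA group, functional boundedness is a respected property in $\mathbf{LCA}$ by the theorem of Trigos-Arrieta \cite{TriAr91, Tri91}, whence $\pi_n(E)$ is functionally bounded in $X$ itself; since $X$ is complete, Corollary \ref{c-Bo} gives that $\overline{\pi_n(E)}$ is compact in $X$.

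The main obstacle is to verify that $\{\pi_n(E)\}_{n\in\NN}$ is a null-sequence in $X$, and here the subgroup topology is essential. Suppose it were not. Then by Lemma \ref{lNull2} the set $E$ would contain an infinite uniformly discrete subset $A$, and since every neighborhood of $0$ in $G$ contains an open subgroup, $A$ may be taken to be $W$-separated for some open subgroup $W=U^\NN\cap c_0(X)$ of $G$. By Fact \ref{fOpen}, $W$ is dually closed and dually embedded in $G$, so Lemma \ref{lBohr}(2) yields $(G/W)^+=G^+/W$, where $G/W$ is discrete. Let $q\colon G\to G/W$ be the quotient map; the corresponding map $q^+\colon G^+\to (G/W)^+$ is continuous. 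Because $A$ is $W$-separated and infinite, $q(A)$ is an infinite subset of the discrete group $G/W$; but $A\subseteq E$ is functionally bounded in $G^+$ (Lemma \ref{l-Boun}(1)), so $q(A)=q^+(A)$ is functionally bounded in $(G/W)^+$ by Lemma \ref{l-Boun}(2). This contradicts \cite[Lemma 4.4]{Tri91}, according to which every functionally bounded subset of the Bohr modification of a discrete group is finite. Hence $\{\pi_n(E)\}_{n\in\NN}$ is a null-sequence.

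Finally I would assemble the pieces: Lemma \ref{lNull1} shows $\{\overline{\pi_n(E)}\}_{n\in\NN}$ is a null-sequence of compact subsets of $X$, so $K:=\prod_{n\in\NN}\overline{\pi_n(E)}$ is a compact subset of $G$ by Fact \ref{p11}(3). Since every $\mathbf{x}=(x_n)\in E$ satisfies $x_n=\pi_n(\mathbf{x})\in\overline{\pi_n(E)}$, we have $E\subseteq K$, and therefore $\mathrm{cl}_{\mathfrak{F}_0(X)}(E)$ is a closed subset of the compact set $K$, hence compact in $\mathfrak{F}_0(X)$. As noted at the outset, this gives both the functional boundedness of $E$ in $\mathfrak{F}_0(X)$ and the compactness of $\mathrm{cl}_{\mathfrak{F}_0(X)}(E)$ in $\mathfrak{F}_0(X)^+$, completing the argument.
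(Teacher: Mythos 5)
Your proposal is correct and follows essentially the same route as the paper's proof: you rule out a non-null sequence of projections $\{\pi_n(E)\}$ via Lemma \ref{lNull2} and the subgroup topology of $\mathfrak{F}_0(X)$ (Proposition \ref{pSubT}(i)), use Trigos-Arrieta's theorem that LCA groups respect functional boundedness to get compact closures $\overline{\pi_n(E)}$, and assemble the compact set $\prod_{n}\overline{\pi_n(E)}$ via Lemmas \ref{lNull1} and Fact \ref{p11}(3). The only cosmetic differences are that you inline the quotient-to-a-discrete-group argument (which the paper packages as Lemma \ref{l-FbMAP}) and phrase the compactness of $\overline{\pi_n(E)}$ through Corollary \ref{c-Bo} rather than citing \cite[Theorem 4.2]{Tri91} directly.
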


\begin{proof}
We show first that the sequence $\{ \pi_n (E)\}_{n\in\NN}$ is a null-sequence in $X$. Suppose for a contradiction that $\{ \pi_n (E)\}_{n\in\NN}$ is not a null-sequence. Then, by Lemma \ref{lNull2}, $E$ has a uniformly discrete sequence $\{ \mathbf{b}_n\}_{n\in\NN}$. Since $X$ is totally disconnected it has a subgroup topology \cite[7.7]{HR1}. Hence $\mathfrak{F}_0(X)$ has a subgroup topology by  Proposition \ref{pSubT}(i). So there exists an open subgroup $H$ of $X$ such that $\{ \mathbf{b}_n\}$ is $(H^\NN \cap c_0(X))$-separated. Thus $\{ \mathbf{b}_n\}$, and hence $E$,  is not contained in a finite union of cosets of $H^\NN \cap c_0(X)$. As $H^\NN \cap c_0(X)$ is an open subgroup of $\mathfrak{F}_0(X)$ by Fact \ref{p11}(2),  Lemma \ref{l-FbMAP} implies that $E$ is not functionally bounded in $\mathfrak{F}_0(X)^+$, a contradiction.

Since the set  $\pi_n (E)$ is functionally bounded in $X^+$  for every $n\in\NN$ by Lemma \ref{l-Boun},  Theorem 4.2 of \cite{Tri91} implies that $\pi_n (E)$ is also functionally bounded in $X$ and $K_n :=\mathrm{cl}_X (\pi_n (E))$ is compact in $X$. Now Lemma \ref{lNull1} implies that $\{ K_n \}_{n\in\NN}$ is a null-sequence of compact subsets in $X$. Set $K:= \prod_{n\in\NN} K_n$. Fact \ref{p11}(3) yields that $K$ is a compact subset of $\mathfrak{F}_0(X)$. Clearly, $\mathrm{cl}_{\mathfrak{F}_0(X)} (E) \subseteq K$. Hence $\mathrm{cl}_{\mathfrak{F}_0(X)} (E)$ is compact in $\mathfrak{F}_0(X)$. Thus $E$ is functionally bounded in $\mathfrak{F}_0(X)$  by Lemma \ref{l-Boun}.
\end{proof}

\begin{lemma} \label{l-FbPC}
Let $X$ be a totally disconnected LCA group. If $E$ is a pseudocompact (respectively,  countably compact or sequentially compact) subset of $\mathfrak{F}_0(X)^+$,  then $E$ is pseudocompact  (respectively,  countably compact or sequentially compact) also in $\mathfrak{F}_0(X)$.
\end{lemma}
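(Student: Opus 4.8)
The plan is to reduce all three cases to the single compactness statement already established in Lemma \ref{l-FbTD}. First I would observe that each of pseudocompactness, countable compactness, and sequential compactness belongs to the family $\mathfrak{P}_0$, so by Diagram \ref{diag} any subset $E$ of $\mathfrak{F}_0(X)^+$ enjoying one of these properties is, in particular, functionally bounded in $\mathfrak{F}_0(X)^+$. Applying Lemma \ref{l-FbTD} to such an $E$, the closure $K:=\mathrm{cl}_{\mathfrak{F}_0(X)}(E)$ is then a compact subset of \emph{both} $\mathfrak{F}_0(X)$ and $\mathfrak{F}_0(X)^+$. (Here $\mathfrak{F}_0(X)^+$ is well-defined and Hausdorff, since $X$ is LCA and hence MAP, so $\mathfrak{F}_0(X)$ is MAP by Fact \ref{fTopPropF}(i).)

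The key step is then a standard compactness-rigidity argument. Write $id:\mathfrak{F}_0(X)\to\mathfrak{F}_0(X)^+$ for the identity map, which is continuous because the Bohr topology $\mathfrak{u}_0^+$ is coarser than $\mathfrak{u}_0$. Its restriction $id|_K$ is a continuous bijection from the compact space $(K,\mathfrak{u}_0|_K)$ onto the Hausdorff space $(K,\mathfrak{u}_0^+|_K)$, hence a homeomorphism. Since $id|_K$ is the identity on the underlying set, this says precisely that the two subspace topologies on $K$ coincide; by transitivity of subspace topologies, the topologies that $\mathfrak{F}_0(X)$ and $\mathfrak{F}_0(X)^+$ induce on the subset $E\subseteq K$ therefore also coincide.

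Finally, because pseudocompactness, countable compactness and sequential compactness are intrinsic properties of the subspace $E$ with its induced topology, and that induced topology is the same whether computed inside $\mathfrak{F}_0(X)$ or inside $\mathfrak{F}_0(X)^+$, the hypothesis that $E$ has the given property in $\mathfrak{F}_0(X)^+$ immediately yields the same property in $\mathfrak{F}_0(X)$. I expect essentially all the genuine work to be hidden in Lemma \ref{l-FbTD}, which manufactures a closure that is simultaneously compact in both topologies; the only remaining obstacle is to state the ``same subspace topology'' observation cleanly, and this is routine once the compact closure is available.
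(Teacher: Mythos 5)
Your proof is correct and takes essentially the same route as the paper's own argument: the paper likewise notes that $E$ is functionally bounded in $\mathfrak{F}_0(X)^+$, applies Lemma \ref{l-FbTD} to get that $K:=\mathrm{cl}_{\mathfrak{F}_0(X)}(E)$ is compact in $\mathfrak{F}_0(X)$, and concludes that $\mathfrak{u}_0|_E=\mathfrak{u}_0^+|_E$, so the property transfers. The only difference is cosmetic: you make explicit the compact-to-Hausdorff rigidity step (the identity on $K$ being a homeomorphism) that the paper leaves implicit.
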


\begin{proof}
Since $E$ is  pseudocompact  (respectively,  countably compact or sequentially compact) in $\mathfrak{F}_0(X)^+$, then it is also functionally bounded in $\mathfrak{F}_0(X)^+$. Hence $K:= \mathrm{cl}_{\mathfrak{F}_0(X)} (E)$ is compact in $\mathfrak{F}_0(X)$  by Lemma \ref{l-FbTD}. So the topologies of $\mathfrak{F}_0(X)$ and $\mathfrak{F}_0(X)^+$ coincide on $K$. In particular, $\mathfrak{u}_0|_E =\mathfrak{u}_0^+|_E$. Thus $E$ is pseudocompact  (respectively,  countably compact or sequentially compact)  in $\mathfrak{F}_0(X)$.
\end{proof}


The class of nuclear groups was introduced by Banaszczyk in \cite[Chapter 3]{Ban}. The concept of a Schwartz topological Abelian group appeared in \cite{ACDT}. This notion generalizes the well-known notion of a Schwartz locally convex space. Recall that an Abelian Hausdorff group $X$ is called a {\it Schwartz group} if for every $U\in \mathcal{N}(X)$ there exists a  sequence $\{ F_n\}_{n\in\NN}$ of finite subsets of $X$ such that the intersection $\bigcap_{n\in\NN} ( (1/n)U +F_n)$ is a neighborhood of $0$, where $(1/n)U:= \{ x\in X: kx\in U \; \forall 1\leq k\leq n\}$.
All nuclear groups as well as the Pontryagin dual groups of  metrizable  Abelian groups are Schwartz groups (see \cite{ACDT}).

Now we are in position to prove Theorem \ref{t11}.

{\large Proof of Theorem  {\rm \ref{t11}}}.
(i)$\Rightarrow$(ii). Let $X$ be totally disconnected. Then $X$ has a subgroup topology by \cite[7.7]{HR1}. Hence $\mathfrak{F}_0(X)$ also has a subgroup topology by  Proposition \ref{pSubT}(i). Thus $\mathfrak{F}_0(X)$ embeds into a direct product of discrete Abelian groups (see, for example, Proposition 2.2 in \cite{AG}).

(ii)$\Rightarrow$(iii). Every LCA group is nuclear \cite[7.10]{Ban}. Thus $\mathfrak{F}_0(X)$ is nuclear by \cite[7.5 and 7.6]{Ban}.

(iii)$\Rightarrow$(iv). Every  nuclear group is a  Schwartz groups  by  \cite{ACDT}.

(iv)$\Rightarrow$(v) follows from Theorem 4.4 of \cite{Aus2} since $\mathfrak{F}_0(X)$ is locally quasi-convex by Fact \ref{fTopPropF}(i).


(v)$\Rightarrow$(vi) is clear (see also \cite{BMPT}).

(vi)$\Rightarrow$(i), (vii)$\Rightarrow$(i), (viii)$\Rightarrow$(i), (ix)$\Rightarrow$(i), (x)$\Rightarrow$(i) Let $X\cong \mathbb{R}^n \times X_0$, where $n\geq 0$ and $X_0$ has an open compact subgroup $K$ \cite[24.30]{HR1}.

By Fact \ref{f3}, $\mathfrak{F}_0 (X)\cong c_0^n \times \mathfrak{F}_0 (X_0)$. Since $c_0$ does not respect sequentiality (respectively, countable compactness, sequential compactness, pseudocompactness and functional boundedness) by Lemma \ref{l-c_0},  we obtain that  $n=0$ by Proposition \ref{pProdSchur}. Hence $X=X_0$ has an open compact subgroup $K$. Now Lemma \ref{l-F_0} implies that $C(X)$ is trivial. Thus $X$ is totally disconnected.

The implications (i)$\Rightarrow$(vii), (i)$\Rightarrow$(viii), (i)$\Rightarrow$(ix)  and (i)$\Rightarrow$(x) and the last assertion of the theorem follow from Lemmas \ref{l-FbTD} and \ref{l-FbPC}.
$\Box$

Recall \cite{DiM} that a Hausdorff topological group $X$ is called {\it locally $q$-minimal} if there exists a neighborhood $V$ of the identity $e_X$ such that whenever $H$ is a Hausdorff group and $p:X\to H$ is a continuous surjective homomorphism such that $p(V)$ is a neighborhood of $e_H$, then $p$ is open. The next corollary of Theorem \ref{t11} and \cite[Theorem 1.3]{DiM} was pointed out to the author by D.~Dikranjan:
\begin{corollary}
If $X$ is a totally disconnected non-discrete LCA group, then $\mathfrak{F}_0 (X)$ is not locally $q$-minimal.
\end{corollary}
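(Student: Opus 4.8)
The plan is to argue by contradiction, combining the structural information about $\mathfrak{F}_0(X)$ supplied by Theorem \ref{t11} with the strong consequence that local $q$-minimality has for such groups via \cite[Theorem 1.3]{DiM}. The guiding idea is that local $q$-minimality is a rigidity condition which, for a group as well-behaved as a nuclear (equivalently Schwartz) group, should force local precompactness; but Fact \ref{fTopPropF}(iii) tells us precisely that $\mathfrak{F}_0(X)$ fails to be locally precompact whenever $X$ is non-discrete. These two facts are incompatible, which is exactly what we want to exploit.

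First I would record the input from the hypotheses. Since $X$ is a totally disconnected $\mathbf{LCA}$ group, the implication (i)$\Rightarrow$(iii) (equivalently (i)$\Rightarrow$(iv)) of Theorem \ref{t11} shows that $\mathfrak{F}_0(X)$ is a nuclear group, hence in particular a Schwartz group, and by (i)$\Rightarrow$(ii) it embeds into a product of $\mathbf{LCA}$ groups. Next, since $X$ is assumed non-discrete, Fact \ref{fTopPropF}(iii) guarantees that $\mathfrak{F}_0(X)$ is \emph{not} locally precompact; moreover, as $X$ is (Ra\u{\i}kov) complete and completeness is preserved by $\mathfrak{F}_0$ by Fact \ref{fTopPropF}(i), the group $\mathfrak{F}_0(X)$ is complete, so that ``locally precompact'' and ``locally compact'' coincide for it and both fail.

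Now I would invoke \cite[Theorem 1.3]{DiM}. Suppose, toward a contradiction, that $\mathfrak{F}_0(X)$ is locally $q$-minimal. Applying that theorem to the nuclear (Schwartz) group $\mathfrak{F}_0(X)$ yields that $\mathfrak{F}_0(X)$ must be locally precompact. This contradicts the preceding paragraph, and therefore $\mathfrak{F}_0(X)$ cannot be locally $q$-minimal, which completes the argument.

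The main obstacle, and the only genuinely delicate point, is matching the exact hypotheses of \cite[Theorem 1.3]{DiM} with the properties at our disposal. The whole argument hinges on knowing that that theorem converts ``locally $q$-minimal together with nuclearity (or the Schwartz property, or embeddability into a product of $\mathbf{LCA}$ groups)'' into ``locally precompact''; one must check that the precise form of the hypothesis used there is one of the equivalent conditions (ii), (iii) or (iv) of Theorem \ref{t11}, all of which $\mathfrak{F}_0(X)$ enjoys once $X$ is totally disconnected. Once the hypotheses are seen to align, the remaining steps are the routine bookkeeping above and no separate computation is required.
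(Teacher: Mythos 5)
Your overall route is exactly the paper's: Theorem \ref{t11} plus Fact \ref{fTopPropF}(iii) plus \cite[Theorem 1.3]{DiM}. But the one point you yourself flag as delicate is a genuine loose end, and as written the key step is unjustified. The theorem of Dikranjan and Morris is not a statement about nuclear or Schwartz groups: as the paper uses it, its hypothesis is that the group is a \emph{closed} subgroup of a product of locally compact groups, and its content is that such a subgroup is locally $q$-minimal only if it is locally compact. So the operative item of Theorem \ref{t11} is (ii), not (iii) or (iv), and applying the theorem ``to the nuclear (Schwartz) group $\mathfrak{F}_0(X)$'' does not match its hypotheses. Moreover, even granting (ii), you must know that the image $H$ of $\mathfrak{F}_0(X)$ under that embedding is \emph{closed} in the product --- a step your write-up never takes.

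The gap is easily closed with ingredients you already assembled. Since $X$ is LCA it is complete, so $\mathfrak{F}_0(X)$ is complete by Fact \ref{fTopPropF}(i), and a complete subgroup of a Hausdorff group is closed; hence $H$ is a closed subgroup of the product of LCA groups. By Fact \ref{fTopPropF}(iii) together with completeness, $H$ is not locally compact (your observation that local precompactness and local compactness coincide for complete groups is exactly right, but its essential role in the paper is to deliver closedness of $H$, not merely to equate the two notions). Now \cite[Theorem 1.3]{DiM} applies directly to the closed non-locally-compact subgroup $H$ and yields that $H$, and hence $\mathfrak{F}_0(X)$, is not locally $q$-minimal; no contradiction framing is needed, though yours is harmless. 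With these two corrections --- replacing the nuclear/Schwartz hypothesis by condition (ii) of Theorem \ref{t11}, and deducing closedness of the embedded copy from completeness --- your argument becomes word for word the paper's proof.
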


\begin{proof}
By Theorem \ref{t11}, the group $\mathfrak{F}_0 (X)$ embeds onto a subgroup $H$ of a product of locally compact groups. Since $\mathfrak{F}_0 (X)$ is complete by Fact \ref{fTopPropF}(i), $H$ is closed. Fact \ref{fTopPropF}(iii) implies that $H$ is not locally compact. So $H$, and hence $\mathfrak{F}_0 (X)$, is not locally $q$-minimal by \cite[Theorem 1.3]{DiM}.
\end{proof}
We do not know whether $\mathfrak{F}_0 (X)$ is  locally $q$-minimal for a connected LCA group $X$.

\begin{remark} \label{r-RP} {\em
Note that the  Glicksberg property has been studied beyond LCA groups by many authors, see \cite{Aus2, BaMP, BaMP2, GaH, ReT3, WuR}. The Schur property in Abelian topological groups was intensively studied by Mart\'{\i}n-Peinador and Tarieladze in \cite{MT} (see also  \cite{Ga7, HGM}). Many other results concerning  preservation and respectness of topological  properties under the Bohr functor see \cite{BMPT}. A complete characterization of Banach spaces which have the Schur property is given in \cite{HGM}. }
\end{remark}

\section{Proof of Theorem \ref{t12}} \label{sec2}

Let $X$ be a metrizable Abelian group and $\rho$ be an invariant metric for $X$.  For every $n\in\NN$, set $U_n^X =U_n :=\{ x\in X: \rho(x,0)<\frac{1}{n}\}$. Note that the equality
\[
d(\mathbf{x},\mathbf{y}):= \sup_{n\in\NN} \rho(x_n, y_n), \quad \forall \mathbf{x}=(x_n), \mathbf{y}=(y_n)\in c_0(X),
\]
defines an invariant metric for $\mathfrak{F}_0(X)$ \cite{DMPT}. Analogously, for every $m\in\NN$, we consider $X^m$ with the metric
\[
d_m ((x_1,\dots,x_m),(y_1,\dots,y_m)) := \max\{ \rho(x_1, y_1),\dots, \rho(x_m, y_m)\}.
\]
So $U_n^{X^m} = U_n^m$ for every $n,m\in\NN$.

Recall that a subset $A$ of $X$ is called {\it $\varepsilon$-dense} in $X$ if for every $x\in X$ there exists $a\in A$ such that $\rho(x,a)<\varepsilon$.
For every  $n,m\in\NN$, we denote by $B_X (U_m, n)$ the set of all $z\in X$ such that the set $F_n (z):=\{ -nz,\dots, -z, 0, z,\dots, nz\}$ is $\frac{1}{m}$-dense in $X$.

\begin{lemma} \label{l-Edense}
Let $X$ be a compact metrizable Abelian group with an invariant metric $\rho$. Then, for every  $n,m\in\NN$, the set $B_X (U_m, n)$ is open in $X$.
\end{lemma}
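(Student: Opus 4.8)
The plan is to show $B_X(U_m,n)$ is open by proving it is a neighborhood of each of its points. Fix $z_0\in B_X(U_m,n)$, so the finite set $F_n(z_0)=\{kz_0:\,-n\le k\le n\}$ is $\tfrac1m$-dense. The first step is to upgrade this $\tfrac1m$-density (which is phrased with a \emph{strict} inequality) into a uniform density with a definite gap, and this is exactly where compactness of $X$ enters. Consider the function
\[
\phi(x):=\min_{|k|\le n}\rho(x,kz_0),\qquad x\in X,
\]
which is continuous as a minimum of finitely many continuous functions. By $\tfrac1m$-density we have $\phi(x)<\tfrac1m$ for every $x$, and since $X$ is compact $\phi$ attains a maximum $c:=\max_{x\in X}\phi(x)<\tfrac1m$. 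Put $\eta:=\tfrac1m-c>0$; this $\eta$ is the slack that makes the whole argument work.

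Next I would use continuity of integer multiplication together with invariance of $\rho$. For each fixed $k$ the map $w\mapsto kw$ is a continuous homomorphism of $X$ sending $0$ to $0$, so there is $\delta_k>0$ with $\rho(w,0)<\delta_k\Rightarrow\rho(kw,0)<\eta$. Since only the finitely many indices $|k|\le n$ occur, $\delta:=\min_{|k|\le n}\delta_k$ is positive and satisfies $\rho(w,0)<\delta\Rightarrow\rho(kw,0)<\eta$ for all $|k|\le n$. By invariance of the metric, $\rho(z,z_0)=\rho(z-z_0,0)$ and $\rho(kz,kz_0)=\rho(k(z-z_0),0)$, so $\rho(z,z_0)<\delta$ forces $\rho(kz,kz_0)<\eta$ simultaneously for all $|k|\le n$.

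Finally I combine the two steps through the triangle inequality. Suppose $\rho(z,z_0)<\delta$ and let $x\in X$ be arbitrary. Choose $k$ with $|k|\le n$ realizing $\rho(x,kz_0)=\phi(x)\le c$; then
\[
\rho(x,kz)\le\rho(x,kz_0)+\rho(kz_0,kz)<c+\eta=\tfrac1m .
\]
Hence $F_n(z)$ is $\tfrac1m$-dense, i.e.\ $z\in B_X(U_m,n)$, which shows $\{z:\rho(z,z_0)<\delta\}\subseteq B_X(U_m,n)$ and so $B_X(U_m,n)$ is open. The one genuinely essential ingredient is the compactness of $X$: it is what converts the pointwise strict inequality $\phi<\tfrac1m$ into a uniform gap $\eta>0$, and without it the supremum of $\phi$ could equal $\tfrac1m$ and leave no room to perturb $z$. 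The remaining ingredients (continuity of $w\mapsto kw$ at $0$, finiteness of the index range, and translation invariance of $\rho$) are routine.
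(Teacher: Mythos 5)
Your proof is correct, and it shares the paper's skeleton: fix $z_0\in B_X(U_m,n)$, use compactness of $X$ to convert the pointwise strict inequality into a uniform gap below $\tfrac1m$ (your $c=\max_x\phi(x)<\tfrac1m$ is exactly the paper's $\varepsilon_0=\max_x f(x,z_0)$), then show that perturbing $z_0$ changes the relevant distances by less than the gap. Where you differ is in the perturbation step. The paper forms the two-variable function $f(x,z)=\rho\bigl(x,F_n(z)\bigr)$ on $X\times X$, checks it is continuous, and invokes its \emph{uniform} continuity on the compact product to get $|f(x,z)-f(x,z_0)|<\tfrac12\bigl(\tfrac1m-\varepsilon_0\bigr)$ uniformly in $x$ whenever $z\in z_0+U_\delta$. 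You instead never form a joint function: you control $\rho(kz,kz_0)=\rho\bigl(k(z-z_0),0\bigr)<\eta$ directly, uniformly over the finitely many $|k|\le n$, using continuity at $0$ of each homomorphism $w\mapsto kw$ together with invariance of $\rho$, and then conclude by one triangle inequality. Your route is somewhat more elementary — it avoids verifying joint continuity and uniform continuity of $f$, and the full gap $\eta$ is available rather than half of it — at the cost of being specific to the metric-group setting (invariance plus continuity of the finitely many multiplication maps), whereas the paper's uniform-continuity argument would apply verbatim to any continuous two-variable "defect" function on a compact square. One could even shortcut your $\delta_k$'s by noting that invariance gives $\rho(kw,0)\le|k|\,\rho(w,0)$, so $\delta=\eta/n$ works; but your continuity argument is perfectly sound as written, and the empty case of $B_X(U_m,n)$, which the paper treats separately, is handled vacuously by your interior-point formulation.
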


\begin{proof}
If $B_X(U_m, n)=\emptyset$, it is open. Assume now that $B_X(U_m, n)$ is not empty. For every $x, z\in X$, we define
\[
f(x,z):= \min\{ \rho(x,kz): k\in\{  -n,\dots, -1, 0, 1,\dots, n\} \} = \rho(x, F_n(z)).
\]
Then $f(x,z)$ is continuous on $X\times X$. Clearly, $F_n (z)$ is $\frac{1}{m}$-dense in $X$ if and only if $f(x, z)<\frac{1}{m}$ for every $x\in X$.

Fix $z_0 \in B_X(U_m, n)$. Since $X$ is compact, we can set $\varepsilon_0 := \max\{ f(x,z_0): x\in X\}$. Then $\varepsilon_0 < \frac{1}{m}$.  As $f(x,z)$ is also uniformly continuous,  there is $\delta \in \NN$ such that
\[
| f(x',z') - f(x,z)| < \frac{1}{2} \left(\frac{1}{m} - \varepsilon_0\right),
\]
for every $(x',z'), (x,z) \in X\times X$ with $(x',z')- (x,z) \in U_\delta \times U_\delta $. In particular, for each $z\in z_0 + U_\delta$, we have
\[
| f(x,z) - f(x,z_0)| < \frac{1}{2} \left(\frac{1}{m} - \varepsilon_0\right), \ \forall x\in X.
\]
So, for every $z\in z_0 + U_\delta$ and each $x\in X$, we obtain
\[
f(x,z) < f(x,z_0) + \frac{1}{2} \left(\frac{1}{m} - \varepsilon_0\right) \leq \varepsilon_0 + \frac{1}{2} \left(\frac{1}{m} - \varepsilon_0\right) =\frac{1}{2} \left(\frac{1}{m} + \varepsilon_0\right)< \frac{1}{m}.
\]
This means that $F_n (z)$ is $\frac{1}{m}$-dense in $X$ for every $z\in z_0 + U_\delta$. Therefore, $B_X(U_m, n)$ is  open in $X$.
\end{proof}

Let $X$ be a compact {\it connected} metrizable Abelian group and  $\mathrm{m}_X$ be the normalized Haar measure on $X$.
Denote by $S(X)$ the set of all elements $x$ of $X$ such that the cyclic subgroup $\langle x \rangle$ generated by $x$ is dense in $X$.
By \cite[25.27]{HR1}, $S(X)$ is measurable and
\begin{equation} \label{eMon1}
\mathrm{m}_X (S(X))=1.
\end{equation}


\begin{lemma} \label{lMon}
Let $X$ be a compact connected metrizable Abelian group and let $z$ be an element of $S(X)$. Then
\begin{enumerate}
\item[{\rm (1)}] $n z\in S(X)$ for every $n\in\NN$.
\item[{\rm (2)}] For every $U\in \mathcal{N}(X)$ there exists $n_U\in\NN$ which satisfies the next condition: for each $x\in X$ there is an integer number $n$ with $|n|\leq n_U$ such that $x-nz\in U$.
\end{enumerate}
\end{lemma}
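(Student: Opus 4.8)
The plan is to prove the two items separately, using compactness of $X$ for (2) and connectedness of $X$ for (1); the two assertions are essentially independent of one another.

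For item (1), I would fix $n\in\NN$, set $H:=\overline{\langle nz\rangle}$, and aim to show $H=X$. The idea is to pass to the quotient group $X/H$ with quotient homomorphism $q:X\to X/H$: since $H$ is a closed subgroup, $X/H$ is again a compact connected Hausdorff Abelian group. The image $q(z)$ generates a dense subgroup of $X/H$, because the continuous surjective image of the dense set $\langle z\rangle$ is dense and $q(\langle z\rangle)=\langle q(z)\rangle$; on the other hand $n\cdot q(z)=q(nz)=0$ since $nz\in H$. Hence $\langle q(z)\rangle$ is a finite subgroup (of order dividing $n$), and being finite it is closed, so its density forces $X/H=\langle q(z)\rangle$ to be finite. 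Since a finite connected group is trivial, I would conclude $X/H=\{0\}$, i.e. $H=X$ and $nz\in S(X)$. A dual variant gives the same conclusion: a proper closed $H$ would admit a character $\chi\neq 1$ of $X$ with $\chi|_H=1$, whence $\chi(z)^n=\chi(nz)=1$, contradicting the torsion-freeness of the dual of a connected compact group.

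For item (2), I would fix $U\in\mathcal{N}(X)$ (which is open, by the definition of $\mathcal{N}$) and show that $\{\,nz+U:n\in\ZZ\,\}$ is an open cover of $X$. Indeed, for any $x\in X$ the set $x-U$ is an open neighborhood of $x$, so density of $\langle z\rangle$ yields an integer $n$ with $nz\in x-U$, that is $x\in nz+U$. By compactness of $X$ there is a finite set $S\subseteq\ZZ$ with $X=\bigcup_{n\in S}(nz+U)$, and setting $n_U:=\max_{n\in S}|n|$ delivers exactly the stated conclusion: every $x\in X$ lies in $nz+U$ for some $n\in S$, giving an integer $n$ with $|n|\le n_U$ and $x-nz\in U$.

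The routine part is item (2): it is pure density together with a finite-subcover argument, using neither connectedness nor item (1). The conceptual step, and the only place where the connectedness hypothesis is genuinely used, is item (1); the crux I expect is the move of quotienting by $\overline{\langle nz\rangle}$, so that the monothetic generator becomes torsion in the quotient, after which connectedness (which forbids nontrivial finite quotients of a connected compact group) collapses the quotient to a point.
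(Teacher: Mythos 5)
Your proposal is correct and follows essentially the same route as the paper: for item (1) the paper likewise quotients by $\mathrm{cl}(\langle nz\rangle)$ and uses that a connected compact group admits no nontrivial finite Hausdorff quotient (you argue directly where the paper argues by contradiction, and your character-theoretic variant via torsion-freeness of the dual is a valid alternative the paper does not spell out), and for item (2) the paper runs the same density-plus-compactness argument, merely phrased with a symmetric $V$ satisfying $V+V\subseteq U$ and a finite cover by sets $y_l+V$ whose centers are then approximated by multiples $n_l z$, rather than your slightly more direct finite subcover of the open cover $\{\, nz+U : n\in\ZZ\,\}$. Both versions are sound; yours streamlines (2) by one step.
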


\begin{proof}
(1) Set $X_n :=\mathrm{cl}(\langle n z \rangle)$. We have to show that $X_n =X$. Suppose for a contradiction that $X_n \not= X$. Then $\langle q(z) \rangle$ is dense in $X/X_n$, where $q: X\to X/X_n$ is the quotient map. Since the group $\langle q(z) \rangle$ is finite, we obtain that the connected group $X/X_n$ is non-trivial and finite, a contradiction.

(2) Take a symmetric $V\in \mathcal{N}(X)$ such that $V+V \subseteq U$. Since $X$ is compact, there is a finite subset $\{ y_l \}_{l=1}^m$ of $X$ such that  $\{ y_l +V\}_{l=1}^m$ is a cover of $X$. For every $1\leq l\leq m$, choose $n_l \in\ZZ$ such that $n_l z \in y_l +V$ and set $n_U := \max\{ |n_1|,\dots,|n_m|\}$.

Let $x\in X$ be arbitrary. Take $1\leq l\leq m$ such that $x\in y_l +V$. Then $x- n_l z \in V-V\subseteq U$ and $|n_l|\leq n_U$. Thus $n_U$ is as desired.
\end{proof}

Let $X$ be a compact connected metrizable Abelian group.
For every   $m,n\in\NN$, set
\[
A_X(U_m,n) := B_X(U_m, n) \cap S(X).  
\]
Then $A_X(U_m,n) \subseteq A_X(U_m,n+1)$, for every $n,m\in\NN$, and Lemma \ref{lMon}(2) implies that
\begin{equation} \label{eMon2}
S(X)=\bigcup_{n\in\NN} A_X(U_m,n) , \quad \forall m\in \NN.
\end{equation}
By Lemma \ref{l-Edense}, $A_X(U_m,n)$ is a measurable subset of $X$ for every   $m,n\in\NN$.



\begin{proposition} \label{pMon}
Let $X$ be a compact connected metrizable Abelian group. Then the group $\mathfrak{F}_0 (X)$ is monothetic.
\end{proposition}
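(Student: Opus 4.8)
The plan is to exhibit a single topological generator of $\mathfrak{F}_0(X)$, i.e. an element $\mathbf{z}=(z_k)_{k\in\NN}\in c_0(X)$ whose cyclic subgroup $\langle\mathbf{z}\rangle$ is dense. Since $\mathfrak{F}_0(X)$ carries the invariant metric $d(\mathbf{x},\mathbf{y})=\sup_k\rho(x_k,y_k)$, density of $\langle\mathbf{z}\rangle$ is equivalent to the condition
\[
(\star)\qquad \forall\,\mathbf{x}\in c_0(X),\ \forall\,m\in\NN\ \ \exists\,N\in\ZZ:\ \rho(Nz_k,x_k)<\tfrac1m\ \text{ for all } k\in\NN .
\]
The central difficulty is a tension inside $(\star)$: to approximate the first few coordinates $x_1,\dots,x_K$ of an arbitrary target one is forced to take $|N|$ large, whereas to keep the tail coordinates $Nz_k$ ($k>K$) near $0$ one needs $|N|$ small relative to $1/\rho(z_k,0)$. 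I would resolve this by building $\mathbf{z}$ in rapidly decaying \emph{blocks}, so that the bound on $|N|$ needed to control the first $K$ coordinates acts only on tail entries that have been made correspondingly small.

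Concretely, I would construct $\mathbf{z}$ together with block boundaries $0=k_0<k_1<\cdots$ by induction so that, for every $J$, the truncation $g_J:=(z_1,\dots,z_{k_J})$ is a topological generator of the compact connected metrizable group $X^{k_J}$, that is $g_J\in S(X^{k_J})$. Having chosen $g_J$, apply Lemma \ref{lMon}(2) to $X^{k_J}$ (with generator $g_J$ and the sup-metric ball $U_{2m}^{k_J}$) to obtain, for each $m$, an integer $n(J,m)$ such that the multiples $ng_J$ with $|n|\le n(J,m)$ form a $\tfrac1{2m}$-dense set in $X^{k_J}$; put $\Lambda_J:=\max\{\,n(J,2m):1\le m\le J\,\}$. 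By continuity of $w\mapsto Nw$ at $0$ for the finitely many $|N|\le\Lambda_J$, choose a neighborhood $W_J\in\mathcal{N}(X)$ with $W_J\subseteq W_{J-1}\cap U_J$ such that $\rho(Nw,0)<\tfrac1{2J}$ whenever $w\in W_J$ and $|N|\le\Lambda_J$. The new coordinates of block $J{+}1$ will be required to lie in $W_J$; nesting $W_J\subseteq W_{J-1}$ guarantees that every $z_k$ with $k>k_J$ lies in $W_J$, which delivers both $\mathbf{z}\in c_0(X)$ (since $W_J\subseteq U_J$) and the decay estimate $\rho(Nz_k,0)<\tfrac1{2J}$ for all $|N|\le\Lambda_J$, $k>k_J$.

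The step I expect to be the main obstacle is showing that the inductive extension is always possible inside the prescribed small box, because $g_J$ is already a \emph{fixed} point and a naive Fubini argument would only control almost every base point. The key auxiliary fact to establish is: for any $g\in S(X^n)$ and any $p\in\NN$, the set $\{\,w\in X^p:(g,w)\in S(X^{n+p})\,\}$ has full Haar measure in $X^p$, and hence meets the nonempty open set $W_J^{\,p}$. I would prove this by duality. As $X$ is metrizable compact, $\widehat{X}$ is countable, so $\widehat{X^{n+p}}=\widehat{X}^{\,n+p}$ is countable; moreover $(g,w)$ fails to be a generator iff some nontrivial character $(\chi,\psi)$ satisfies $\prod_{i}\chi_i(g_i)\cdot\prod_{j}\psi_j(w_j)=1$. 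If $\psi$ is trivial this forces $\chi$ to annihilate $g$, impossible since $g\in S(X^n)$; if $\psi$ is nontrivial then, by connectedness of $X$, the map $w\mapsto\prod_j\psi_j(w_j)$ is a surjective character $X^p\to\TT$, so the set of offending $w$ is a single fiber, which is Haar-null. A countable union of null fibers is null, proving the claim (and giving another use of \eqref{eMon1}, since the same reasoning shows $S(X^{k_J})$ has full measure at the base of the induction).

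Finally I would verify $(\star)$. Given $\mathbf{x}\in c_0(X)$ and $m$, choose $J\ge m$ with $\rho(x_k,0)<\tfrac1{2m}$ for all $k>k_J$, and set $K:=k_J$. Applying the $\tfrac1{2m}$-density of the multiples of $g_J$ to the point $(x_1,\dots,x_K)\in X^K$ yields $N$ with $|N|\le n(J,2m)\le\Lambda_J$ and $\rho(Nz_k,x_k)<\tfrac1{2m}<\tfrac1m$ for all $k\le K$. For $k>K$ the decay estimate gives $\rho(Nz_k,0)<\tfrac1{2J}\le\tfrac1{2m}$, so by the triangle inequality for the invariant metric $\rho(Nz_k,x_k)\le\rho(Nz_k,0)+\rho(0,x_k)<\tfrac1m$. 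Thus $\rho(Nz_k,x_k)<\tfrac1m$ for every $k$, establishing $(\star)$ and hence that $\mathfrak{F}_0(X)$ is monothetic.
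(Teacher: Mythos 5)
Your proof is correct, and it takes a genuinely different route from the paper's. Both arguments share the same skeleton --- the invariant sup-metric, the quantitative density bound of Lemma \ref{lMon}(2), and the tension between bounded multipliers and rapidly decaying tail coordinates --- but the mechanisms for producing $\mathbf{z}$ differ substantially. The paper fixes the density bounds first: it introduces the sets $B_{X^m}(U,n)$ of points whose multiples up to $n$ are $U$-dense, proves they are open (Lemma \ref{l-Edense}), intersects them with $S(X^m)$, which has full measure by (\ref{eMon1}), uses inner regularity of Haar measure to build a nested sequence of compact sets $K_m\subseteq X^m$ of positive measure, and extracts $\mathbf{z}$ as a point of $\bigcap_m \left(K_m\times X^{\NN\setminus\{1,\dots,m\}}\right)$ by compactness. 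You instead fix the partial generator $g_J$ first and read off the multiplier bounds $n(J,m)$ a posteriori from Lemma \ref{lMon}(2); measure theory enters only through your extension lemma --- for $g\in S(X^n)$, almost every $w\in X^p$ gives $(g,w)\in S(X^{n+p})$ --- which you prove correctly via countability of $\widehat{X}$ (as $X$ is compact metrizable) and the observation that each offending set is a null fiber of a character that is surjective because $X^p$ is connected (connectedness is exactly what excludes open kernels). This resolves the Fubini-type obstacle you flag: you get genericity of extensions over a \emph{fixed} base point, not merely over almost all base points, which is precisely what the blockwise induction needs. What each approach buys: yours dispenses entirely with Lemma \ref{l-Edense}, with the measurability and regularity considerations, and with the nested-compacts intersection, and, as you note, it subsumes (\ref{eMon1}) by the same null-fiber count; the paper's version avoids any explicit appeal to the countability of the dual and yields a whole compact set of topological generators rather than a single point. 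Two cosmetic remarks: with your stated convention that $|n|\le n(J,m)$ gives $\tfrac{1}{2m}$-density, the bound $n(J,2m)$ invoked at the end actually gives $\tfrac{1}{4m}$-density, stronger than the $\tfrac{1}{2m}$ you quote (harmless), and you should set $W_0:=X$ to start the nesting.
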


\begin{proof}
Let $\rho$ be an invariant metric for $X$. We separate the proof into two steps.

{\it Step} 1. Let us show first that for every $m\in \NN$ there exists a natural number $n_m$ and a compact subset $K_m$ of the direct product $X^m$ such that the sequences  $\{ n_m\}$ and $\{ K_m\}$  satisfy the following conditions:
\begin{enumerate}
\item[{\rm (i)}] $1=n_0 <n_1 <n_2< \dots$;
\item[{\rm (ii)}] $K_m \subseteq A_{X^m} \left( \left( U_{2^m n_{m-1}}\right)^m , \ n_m\right) =A_{X^m} \left(  U_{2^m n_{m-1}}^{X^m} , \ n_m\right)$ for every $m\in \NN$;
\item[{\rm (iii)}] $K_{m+1} |_{X^{m}} \subseteq K_{m}$ for every $m\in \NN$;
\item[{\rm (iv)}] $\pi_m (K_m) \subseteq U_{2^m n_{m-1}}$ for every $m\in \NN$;
\item[{\rm (v)}] $\mathrm{m}_{X^m} (K_m) >0$ for every $m\in \NN$.
\end{enumerate}
We build such sequences $\{ n_m\}$ and $\{ K_m\}$ by induction. For $m=1$, by (\ref{eMon1}) and (\ref{eMon2}), we can choose $n_1 >1$ such that
\[
\mathrm{m}_X \left( A_X(U_2, n_1) \cap U_2 \right)>0.
\]
Take arbitrarily a compact subset $K_1$ of $A_X(U_2, n_1) \cap U_2$ such that  $\mathrm{m}_X (K_1)>0$. Clearly,  $n_1$ and $K_1$ satisfy (i)-(v).

Suppose we have already built $1=n_0 <n_1 < \dots< n_m$ and $K_1,\dots , K_m$ which satisfy  (i)-(v). Since $\mathrm{m}_{X^{m+1}} \left( K_m \times U_{2^{m+1} n_{m}}\right) >0$,  (\ref{eMon1}) and (\ref{eMon2}) imply that there is $n_{m+1} > n_m$ such that
\[
\mathrm{m}_{X^{m+1}} (E_{m+1})>0,
\]
where  $E_{m+1} := A_{X^{m+1}} \left(  \left( U_{2^{m+1} n_{m}}\right)^{m+1} , \ n_{m+1} \right) \cap (K_m \times  U_{2^{m+1} n_{m}})$. Now take  arbitrarily a compact subset $K_{m+1}$ of $E_{m+1}$ such that  $\mathrm{m}_{X^{n+1}} (K_{m+1})>0$. Clearly, $n_1,\dots, n_{m+1}$ and $K_1,\dots, K_{m+1}$  satisfy (i)-(v).

{\it Step} 2. For every $m\in\NN$, set $K'_m := K_m \times X^{\NN \setminus \{ 1,\dots,m\}}$. Then, by (iii) and (v), $\{ K'_m\}$ is a decreasing sequence of non-empty compact subsets of $X^\NN$. Take arbitrarily $\mathbf{z}=(z_m) \in \cap_{m\in\NN} K'_m$. Then $z_m \in \pi_m (K_m)$, and hence $\mathbf{z}\in c_0(X)$ by (iv).
To prove the proposition it is enough to show that  $\langle \mathbf{z} \rangle$ is dense in $\mathfrak{F}_0(X)$.

Let $\varepsilon>0$ and $\mathbf{x}=(x_n)\in c_0(X)$. Choose $m\in\NN$ such that
\begin{equation} \label{eMon3}
\frac{1}{2^m} + \sup_{n>m} \rho(x_n, 0) <\frac{\varepsilon}{2}.
\end{equation}
Note that $\mathbf{z}_m :=(z_1,\dots, z_m)\in K_m$. Hence, by (ii), for $\mathbf{x}_m :=(x_1,\dots, x_m)$ there exists an integer number $k$  such that $|k|\leq n_m$ and
\begin{equation} \label{eMon4}
\mathbf{x}_m - k\cdot \mathbf{z}_m =(x_1- k z_1, \dots, x_m- k z_m) \in \left( U_{2^m n_{m-1}}\right)^m .
\end{equation}
It remains to prove that $d(\mathbf{x},k \mathbf{z})<\varepsilon$.

For $n=1,\dots, m$, by (\ref{eMon3}) and (\ref{eMon4}), we have
\begin{equation} \label{eMon5}
\rho(x_n, kz_n)< \frac{1}{2^m n_{m-1}} <\frac{\varepsilon}{2}.
\end{equation}
Let $n>m$. Since $|k|\leq n_m$, $z_n \in \pi_n (K_n)$ and $\rho$ is invariant, we obtain
\begin{equation} \label{eMon6}
\rho(x_n, kz_n)          \leq \rho(x_n,0) +\rho(0, kz_n) \leq \rho(x_n, 0)+ |k| \rho(0, z_n)
                         \stackrel{\mathrm{(i),(iv)}}{\leq} \rho(x_n, 0) +n_m\cdot \frac{1}{2^{m+1} \cdot n_m} \stackrel{(\ref{eMon3})}{<} \frac{\varepsilon}{2}.
\end{equation}
Now (\ref{eMon5}) and (\ref{eMon6}) imply that $d(\mathbf{x},k \mathbf{z})<\varepsilon$. Hence $\langle \mathbf{z}\rangle$ is dense in $\mathfrak{F}_0 (X)$. Thus $\mathfrak{F}_0 (X)$ is monothetic.
\end{proof}

Following \cite{CR66} we say that a topological group $X$ has {\it property UB} if each real-valued uniformly continuous function on $X$ is bounded.
In the next proposition we generalize Example 4.2 in \cite{CR66}.
\begin{proposition} \label{pUB}
Let $X$ be a connected compact Abelian group. Then the groups $(X^\NN,\mathfrak{u})$ and $\mathfrak{F}_0(X)$ have the property UB.
\end{proposition}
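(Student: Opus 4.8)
The plan is to reduce everything to a single structural fact about compact connected groups and a telescoping estimate. The key preliminary observation I would record is a \emph{covering lemma}: if $V$ is a symmetric open neighborhood of $0$ in the compact connected Abelian group $X$, then there is an $n=n(V)\in\NN$ with $X=V+\cdots+V$ ($n$ summands). This is standard: the subgroup generated by $V$ is open, hence closed, hence all of $X$ by connectedness, so $X=\bigcup_k(V+\cdots+V)$ ($k$ summands) is an increasing union of open sets, and compactness selects a single $k=n$ that already exhausts $X$. Since $X$ is Abelian I will freely use that uniform continuity of $f\colon G\to\mathbb{R}$ on a topological group $G$ means: for every $\varepsilon>0$ there is $U\in\mathcal{N}(G)$ with $|f(x)-f(y)|<\varepsilon$ whenever $x-y\in U$.

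\textbf{The group $(X^\NN,\mathfrak{u})$.} Let $f\colon(X^\NN,\mathfrak{u})\to\mathbb{R}$ be uniformly continuous. Applying this with $\varepsilon=1$, and using that $\{V^\NN:V\in\mathcal{N}(X)\}$ is a base at $0$, I get a symmetric $V\in\mathcal{N}(X)$ with $|f(\mathbf{x})-f(\mathbf{y})|<1$ whenever $\mathbf{x}-\mathbf{y}\in V^\NN$. Fix $n=n(V)$ from the covering lemma. For an arbitrary $\mathbf{x}=(x_k)\in X^\NN$ I decompose each coordinate $x_k=v_{1,k}+\cdots+v_{n,k}$ with $v_{i,k}\in V$ and set $\mathbf{v}_i:=(v_{i,k})_k\in V^\NN$, so that $\mathbf{x}=\mathbf{v}_1+\cdots+\mathbf{v}_n$. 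Along the partial sums $\mathbf{s}_0:=0$, $\mathbf{s}_j:=\mathbf{v}_1+\cdots+\mathbf{v}_j$ each step satisfies $\mathbf{s}_j-\mathbf{s}_{j-1}=\mathbf{v}_j\in V^\NN$, hence $|f(\mathbf{x})-f(0)|\le\sum_{j=1}^n|f(\mathbf{s}_j)-f(\mathbf{s}_{j-1})|<n$. Thus $f$ is bounded, and $(X^\NN,\mathfrak{u})$ has property UB.

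\textbf{The group $\mathfrak{F}_0(X)$.} The argument is identical except that the decomposition must remain inside $c_0(X)$. Here the base at $0$ is $\{V^\NN\cap c_0(X)\}$, so uniform continuity of $f\colon\mathfrak{F}_0(X)\to\mathbb{R}$ yields a symmetric $V$ with $|f(\mathbf{x})-f(\mathbf{y})|<1$ whenever $\mathbf{x}-\mathbf{y}\in V^\NN\cap c_0(X)$, and I fix $n=n(V)$ as before. Given $\mathbf{x}=(x_k)\in c_0(X)$, choose $N$ with $x_k\in V$ for all $k\ge N$. For $k<N$ decompose $x_k$ into $n$ summands of $V$ as above; for $k\ge N$ put $v_{1,k}:=x_k$ and $v_{i,k}:=0$ for $i\ge2$. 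Then $\mathbf{v}_2,\dots,\mathbf{v}_n$ are eventually $0$, so they lie in $X^{(\NN)}\subseteq c_0(X)$, while the tail of $\mathbf{v}_1$ coincides with the tail of $\mathbf{x}$, so $\mathbf{v}_1\in c_0(X)$; all of them lie in $V^\NN\cap c_0(X)$ and sum to $\mathbf{x}$. The same telescoping estimate gives $|f(\mathbf{x})|<|f(0)|+n$ for every $\mathbf{x}\in c_0(X)$, so $\mathfrak{F}_0(X)$ has property UB as well.

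The main obstacle is precisely the $c_0(X)$-constraint in the second case: a naive coordinatewise decomposition need not produce null sequences. The resolution is the simple but essential observation that, for a null sequence $\mathbf{x}$, the coordinates $x_k$ are themselves already inside $V$ from some index on, so the auxiliary summands can be taken to vanish there and the whole decomposition stays in $V^\NN\cap c_0(X)$. Apart from this point, both assertions follow from the covering lemma together with the elementary telescoping bound, so I do not anticipate further difficulties.
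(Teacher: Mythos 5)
Your proof is correct and takes essentially the same approach as the paper's: the covering fact $X=V+\dots+V$ ($n$ summands) obtained from connectedness plus compactness, a coordinatewise decomposition of $\mathbf{x}$ into $n$ layers lying in $V^\NN$ (with the tail of the first layer absorbing the null sequence and the remaining layers eventually zero, so the decomposition stays in $V^\NN\cap c_0(X)$), and a telescoping estimate giving $|f(\mathbf{x})|\le |f(0)|+n$. The only cosmetic difference is that the paper uses the minimal number $r_i$ of summands per coordinate instead of padding every coordinate to exactly $n$ summands with zeros.
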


\begin{proof}
Let $f(\mathbf{x})$ be a real-valued uniformly continuous function on $(X^\NN,\mathfrak{u})$ (respectively, on $\mathfrak{F}_0(X)$). Take $U\in \mathcal{N}(X)$ such that $\mbox{ whenever } \mathbf{x}, \mathbf{z} \in X^\NN \mbox{ and } \mathbf{x} -\mathbf{z} \in U^\NN \; (\mbox{respectively}, \mathbf{x}, \mathbf{z} \in c_0(X) \mbox{ and } \mathbf{x} -\mathbf{z} \in U^\NN\cap c_0(X))$ we have
\begin{equation} \label{eUB1}
| f(\mathbf{x}) -f(\mathbf{z})|<1.
\end{equation}

Since $X$ is connected we have
\[
X=\bigcup_{n\in\NN} (n)U, \mbox{ where } (n)U:= \underbrace{U+\dots +U}_{n}.
\]
As $X$ is also compact, there is a natural number $m$ such that $X=(m)U$.

Fix $\mathbf{x}:=(x_i)_{i\in\NN} \in X^\NN$ (respectively, $\mathbf{x}\in \mathfrak{F}_0(X)$). For every $i\in\NN$ set $r_i :=\min\{ l\in\NN : x_i\in (l)U\}$. So $r_i\leq m$ for every $i\in\NN$ and, if $\mathbf{x}\in \mathfrak{F}_0(X)$, then $r_i =1$ for all sufficiently large $i$. Hence for every $i\in\NN$ there are nonzero elements $u_{1,i},\dots, u_{r_i, i} \in U$ such that
\[
x_i = u_{1,i}+ \dots + u_{r_i, i}.
\]
Set $s:=\max\{ r_i : i\in\NN\}$. So $s\leq m$. For every $1\leq l\leq s$ we set
\[
y^l_i := \left\{
\begin{array}{cc}
  u_{i,l}, & \mbox{ if } l\leq r_i\\
  0, & \mbox{ if } l> r_i
\end{array}
\right. ,
\]
and put $\mathbf{y}_l := (y^l_i)_{i\in\NN}$. Clearly, for every $1\leq l\leq s$ we have $\mathbf{y}_l \in U^\NN$. In the case $\mathbf{x}\in \mathfrak{F}_0(X)$ we have  $\mathbf{y}_1 \in U^\NN \cap c_0(X)$ and $\mathbf{y}_l \in U^\NN \cap X^{(\NN)}$ for every $2\leq l\leq s$. So
\begin{equation} \label{eUB}
\mathbf{x} = \mathbf{y}_1 +\dots +\mathbf{y}_s, \mbox{ where } \mathbf{y}_l\in U^\NN \; \left(\mbox{respectively},  \mathbf{y}_l \in U^\NN \cap\mathfrak{F}_0(X)\right) \mbox{ for every } 1\leq l\leq s.
\end{equation}

Set $\mathbf{y}_0 := (0)$. Now (\ref{eUB1}) and (\ref{eUB}) imply
\[
| f(\mathbf{x})|\leq |f(\mathbf{y}_0)| +\sum_{i=1}^s |f(\mathbf{y}_0 +\dots +\mathbf{y}_i)-f(\mathbf{y}_0 +\dots +\mathbf{y}_{i-1})| \leq |f(\mathbf{y}_0)|+m.
\]
Thus $f$ is bounded.
\end{proof}
We do not know whether every  real-valued uniformly continuous function on $\mathfrak{F}_0(X)$ can be extended to a  real-valued uniformly continuous function on $(X^\NN,\mathfrak{u})$.

Theorem \ref{t12} is a part of the following one in which we summarize also some known results from \cite{DMPT, Ga8}.

\begin{theorem}
Let $X$ be a  compact connected metrizable Abelian group. Then
\begin{enumerate}
\item[{\rm (i)}] {\rm (\cite{DMPT})} $\mathfrak{F}_0 (X)$ is a connected Polish  Abelian group.
\item[{\rm (ii)}] $\mathfrak{F}_0 (X)$ is   monothetic.
\item[{\rm (iii)}] $\mathfrak{F}_0 (X)$ is  a  non-Schwartz group.
\item[{\rm (iv)}] {\rm (\cite{DMPT, Ga8})} $\mathfrak{F}_0 (X)$ has countable dual. More precisely, $\widehat{\mathfrak{F}_0 (X)}= \widehat{X}^{(\mathbb{N})}$.
\item[{\rm (v)}] {\rm (\cite{Ga8})} $\mathfrak{F}_0 (X)$ is  reflexive.
\item[{\rm (vi)}] $\mathfrak{F}_0 (X)$ does not respect any property from $\mathfrak{P}$. In particular, $\mathfrak{F}_0 (X)$ does not have the Schur property.
\item[{\rm (vii)}] $\mathfrak{F}_0 (X)$ has the property UB.
\item[{\rm (viii)}] $c_0\left( \mathfrak{F}_0 (X)\right)$ is $\mathfrak{g}$-closed in $\mathfrak{F}_0 (X)^\NN$.
\end{enumerate}
\end{theorem}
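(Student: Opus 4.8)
The plan is to dispose of items (i)--(vii) quickly, since each either restates a result already proved above or reduces to a cited one, and then to concentrate on the genuinely new assertion (viii). Throughout I assume $X\neq\{0\}$, the only case in which (iii) and (vi) are meaningful.

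For (i) I would note that $X$ is complete, metrizable and separable, so by Fact \ref{fTopPropF}(i) the same holds for $\mathfrak{F}_0(X)$; a complete metrizable separable group is Polish, and connectedness is likewise preserved by Fact \ref{fTopPropF}(i). Item (ii) is exactly Proposition \ref{pMon} and item (vii) is exactly Proposition \ref{pUB}. For (iv) I would apply Fact \ref{f3}(2) with $X_0=X$: since $X$ is connected its only open subgroup is $X$ itself and $X^\perp=\{0\}$, so $\mathbf g=(g_n)\in\widehat{\mathfrak F_0(X)}$ forces $g_n=0$ for all large $n$, giving $\widehat{\mathfrak F_0(X)}=\widehat X^{(\NN)}$, which is countable because $\widehat X$ is countable ($X$ being compact metrizable). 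Item (v) is Fact \ref{f1}. For (iii) and (vi) I would use that a nontrivial compact connected group is not totally disconnected and satisfies $C(X)=X\neq\{0\}$: then (iii) follows from the equivalence (i)$\Leftrightarrow$(iv) of Theorem \ref{t11}, and (vi) follows from Lemma \ref{l-F_0}, with $X$ itself serving as the open compact subgroup.

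The main obstacle is (viii), because $\mathfrak{F}_0(X)$ does not have the Schur property by (vi), so Fact \ref{fGclosed} is unavailable and $\mathfrak{g}$-closedness must be verified by hand. Writing $Y:=\mathfrak F_0(X)$, I must show that $c_0(Y)$ equals the intersection of all $s_{\mathbf u}(Y^{\NN})$ containing it, i.e. that every $\mathbf Y=(\mathbf y^{(k)})_{k}\in Y^{\NN}\setminus c_0(Y)$ can be separated by an admissible $\mathbf u\in\widehat{Y^{\NN}}^{\NN}$. Since $\mathbf y^{(k)}\not\to 0$ in $Y$, there are $\varepsilon_0>0$ and a strictly increasing sequence $k_1<k_2<\cdots$ with $\sup_n\rho(y^{(k_i)}_n,0)>\varepsilon_0$, so I may pick inner indices $n_i$ so that $x_i:=y^{(k_i)}_{n_i}\in X$ satisfies $\rho(x_i,0)>\varepsilon_0$.

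The key trick is to exploit that $X$ is compact, whence $X=X^+$, so the $\varepsilon_0$-ball contains a basic Bohr neighborhood $\{x:|(\gamma_l,x)-1|<\delta,\ l=1,\dots,p\}$. Thus each $x_i$ satisfies $|(\gamma_l,x_i)-1|\ge\delta$ for some $l\le p$, and by the pigeonhole principle a single character $\gamma^\ast\in\widehat X$ and a single $\delta>0$ work for infinitely many $i$; after passing to a subsequence I may assume $|(\gamma^\ast,x_i)-1|\ge\delta$ for all $i$. I then define $u_j\in\widehat{Y^{\NN}}$ to read off the $n_j$-th inner coordinate of the $k_j$-th outer coordinate through $\gamma^\ast$, namely $(u_j,\mathbf Z)=(\gamma^\ast,z^{(k_j)}_{n_j})$ for $\mathbf Z=(\mathbf z^{(k)})_k$; this is continuous since it factors through the projections $Y^{\NN}\to Y\to X$. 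Then $(u_j,\mathbf Y)=(\gamma^\ast,x_j)\not\to 1$, so $\mathbf Y\notin s_{\mathbf u}(Y^{\NN})$. Finally, for any $\mathbf Z\in c_0(Y)$ the outer convergence $\mathbf z^{(k)}\to 0$ in $Y$ is uniform in the inner index, so $\rho(z^{(k_j)}_{n_j},0)\le\sup_n\rho(z^{(k_j)}_n,0)\to 0$ as $j\to\infty$, regardless of how the $n_j$ behave; continuity of $\gamma^\ast$ then yields $(u_j,\mathbf Z)\to 1$, whence $c_0(Y)\le s_{\mathbf u}(Y^{\NN})$. This exhibits an admissible $\mathbf u$ separating $\mathbf Y$ from $c_0(Y)$, proving $c_0(Y)$ is $\mathfrak g$-closed. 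The crucial and slightly delicate point — and the reason the uniform topology $\mathfrak u_0$, not the product topology, is what makes the statement true — is precisely this last uniform estimate, which lets the single character $\gamma^\ast$ simultaneously annihilate all null sequences while still detecting $\mathbf Y$.
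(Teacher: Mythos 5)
Your proposal is correct, and for item (viii) it takes a route that differs from the paper's in one key step, so a brief comparison is in order. Items (i), (ii), (v) and (vii) are handled exactly as in the paper (via Fact \ref{fTopPropF}, Proposition \ref{pMon}, Fact \ref{f1} and Proposition \ref{pUB}); your derivation of (iv) from Fact \ref{f3}(2) --- connectedness forces the only open subgroup to be $X$ itself, whence $g_n=0$ eventually --- is a correct reconstruction of what the paper merely cites, and your direct appeal to Lemma \ref{l-F_0} for (vi) (with $X$ as its own open compact subgroup and $C(X)=X\neq\{0\}$) is equivalent to the paper's appeal to Theorem \ref{t11}; your explicit caveat $X\neq\{0\}$ is appropriate, since (iii) and (vi) fail for the trivial group. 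For (viii), the skeleton of your argument coincides with the paper's: both select a double-indexed sequence of witnesses $x_j=y^{(k_j)}_{n_j}$ with $\rho(x_j,0)>\varepsilon_0$, both use single-coordinate characters of the form $\nu_{k_j}\bigl(\nu_{n_j}(\gamma)\bigr)$, and both rest on the same crucial uniform estimate $\rho\bigl(z^{(k_j)}_{n_j},0\bigr)\le\sup_n\rho\bigl(z^{(k_j)}_n,0\bigr)\to 0$ for null sequences, which you rightly single out as the reason the uniform topology makes the statement true. The difference is in how the separating character is produced: the paper uses sequential compactness of the compact metrizable $X$ to pass to a subsequence with $y^{n_l}_{k_l}\to y\neq 0$ and then chooses $g\in\widehat{X}$ with $(g,y)\neq 1$, so that $(\chi_l,(\mathbf{y}_n))\to(g,y)\neq 1$; you instead use that the topology of a compact group coincides with its Bohr topology, place a basic Bohr neighborhood $\{x: |(\gamma_l,x)-1|<\delta,\ l\le p\}$ inside the $\varepsilon_0$-ball, and pigeonhole to obtain one $\gamma^\ast$ with $|(\gamma^\ast,x_j)-1|\ge\delta$ along a subsequence, so the values stay bounded away from $1$ rather than converging to a non-unit limit. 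Both steps are valid; the paper's limit-point extraction is marginally shorter given the metrizability hypothesis, while your Bohr-plus-pigeonhole argument avoids producing a limit point and would adapt to non-metrizable compact $X$ once the metric formulation of non-convergence is replaced by a neighborhood formulation (as written you still use the sup-metric $d$ to select the witnesses $k_j,n_j$, so metrizability is not yet fully eliminated from your version).
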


\begin{proof} (i). The group $\mathfrak{F}_0 (X)$ is connected and Polish by Fact \ref{fTopPropF}.
(ii) follows from Proposition \ref{pMon}.
(iii). The group $\mathfrak{F}_0 (X)$ is not a  Schwartz group by Theorem \ref{t11}.
(vi) follows from Theorem  \ref{t11}.
(vii) follows from Proposition \ref{pUB}.

(viii). To prove that $c_0\left( \mathfrak{F}_0 (X)\right)$ is $\mathfrak{g}$-closed in $\mathfrak{F}_0 (X)^\NN$ it is enough to show that for every $(\mathbf{y}_n)_{n\in\NN} \in \mathfrak{F}_0 (X)^\NN \setminus c_0\left( \mathfrak{F}_0 (X)\right)$ there exists a sequence $\{\chi_l \}_{l\in\NN} \in \widehat{\mathfrak{F}_0 (X)}^{(\mathbb{N})}$ such that
\begin{enumerate}
\item[(1)] $(\chi_l, (\mathbf{x}_n)) \to 1 \mbox{ at } l\to\infty, \quad \forall (\mathbf{x}_n) \in c_0\left( \mathfrak{F}_0 (X)\right),$ and
\item[(2)] $(\chi_l, (\mathbf{y}_n))\not\to 1  \mbox{ at } l\to\infty$.
\end{enumerate}

Let $\rho$ be an invariant metric for $X$ and $\mathbf{y}_n =(y^n_k)_{k\in\NN} \in c_0(X)$. Since $\mathbf{y}_n \not\to 0$ in $\mathfrak{F}_0 (X)$ we can find an increasing sequence $\{n_l\}_{l\in\NN}$ such that
\[
d\left(0, \mathbf{y}_{n_l}\right) = \sup_{k\in\NN} \rho\left( 0, y^{n_l}_k\right)> \delta >0 , \quad \forall l\in\NN.
\]
So, for every $l\in\NN$ we can choose $k_l\in\NN$ such that
\begin{equation} \label{eResp1}
\rho\left( 0, y^{n_l}_{k_l}\right)> \delta.
\end{equation}
Since $X$ is compact and metrizable, without loss of generality we can assume that $y^{n_l}_{k_l}$ converges to an element $y\in X$. It follows from (\ref{eResp1}) that
$\rho\left( 0, y\right)\geq \delta >0.$
In particular, $y\not= 0$. Choose $g\in \widehat{X}$ such that
\begin{equation} \label{eResp3}
( g, y)\not= 1.
\end{equation}
Set
\[
\chi_l = \nu_{n_l} (\mathbf{g}_l) \in \widehat{\mathfrak{F}_0 (X)}^{(\mathbb{N})} , \mbox{ where } \mathbf{g}_l = \nu_{k_l} (g) \in \widehat{\mathfrak{F}_0 (X)}.
\]
Let us check (1) and (2).

Fix  $(\mathbf{x}_n) \in c_0\left( \mathfrak{F}_0 (X)\right)$, where $\mathbf{x}_n =(x^n_k)_{k\in\NN} \in c_0(X)$. This means that
\begin{equation} \label{eResp4}
d\left(0, \mathbf{x}_{n}\right) = \sup_{k\in\NN} \rho\left( 0, x^{n}_k\right) \to 0 \mbox{ at } n\to\infty.
\end{equation}
It follows from (\ref{eResp4}) that $x^{n_l}_{k_l} \to 0 \mbox{ in } X.$
Hence
\[
(\chi_l, (\mathbf{x}_n))=\left( \mathbf{g}_l, \mathbf{x}_{n_l} \right) = \left( g, x^{n_l}_{k_l}\right) \to 1, \mbox{ at } l\to\infty.
\]
This proves (1). The inequality (\ref{eResp3}) yields
\[
(\chi_l, (\mathbf{y}_n))=\left( \mathbf{g}_l, \mathbf{y}_{n_l} \right) = \left( g, y^{n_l}_{k_l}\right) \to (g, y) \not= 1,  \mbox{ at } l\to\infty,
\]
that proves (2). Thus  $c_0\left( \mathfrak{F}_0 (X)\right)$ is $\mathfrak{g}$-closed in $\mathfrak{F}_0 (X)^\NN$.
\end{proof}


\section{The Glicksberg and Schur properties and $k$- and $s$-groups} \label{secCateg}


In this section  we show that the Glicksberg and Schur properties can be naturally defined by two natural functors in $\mathbf{TG}$. For this  we consider two important classes of topological groups introduced by Noble in \cite{Nob, Nob2}, namely, $k$- and $s$-groups.

(I) {\it The Glicksberg property and $k$-groups.} For every $(X,\tau)\in \mathbf{TG}$ denote by $k(\tau)$ the finest group topology for $X$ coinciding on compact sets with $\tau$. In particular, $\tau$ and $ k(\tau)$ have the same family of compact subsets. Clearly, $\tau\leq k(\tau)$. If $\tau = k(\tau)$, the group $(X,\tau)$ is called a {\it $k$-group} \cite{Nob2}. The group $(X,k(\tau))$ is called the {\it $k$-modification} of $X$. The assignment $\mathbf{k}(X,\tau) := (X,k(\tau))$ is a coreflector from $\mathbf{TG}$ to the full subcategory $\mathbf{K}$ of all  $k$-groups. The class $\mathbf{K}$   contains all topological groups whose underlaying space is a $k$-space. In particular, the class $\mathbf{LC}$ of all locally compact groups  is contained in $\mathbf{K}$. Since every metrizable group is a $k$-space we have $\mathbf{LC} \subsetneqq  \mathbf{K}$. The family of all Abelian $k$-groups we denote by $\mathbf{KA}$.

Denote by $\mathbf{PCom}$ the class of all precompact  groups, and by $\mathfrak{RC}$ the class of all MAP groups which respect compactness. The item (1) in the next proposition shows that respect compactness is naturally defined by the functors $\mathbf{k}$ and $\mathfrak{B}$. Note also that item (5) generalizes  \cite[Theorem 1.2]{Tri91} (see also Remark in \cite{BaMP2}).
\begin{proposition} \label{p61}
Let $X$ and $Y$ be MAP topological groups.
\begin{enumerate}
\item[{\rm (1)}] $X\in \mathfrak{RC}$ if and only if $(\mathbf{k}\circ\mathfrak{B})(X) =\mathbf{k}(X)$.
\item[{\rm (2)}] $X\in  \mathbf{K}\cap \mathfrak{RC}$ if and only if $(\mathbf{k}\circ\mathfrak{B})(X) =X$.
\item[{\rm (3)}] $\mathbf{PCom}\subsetneqq  \mathfrak{RC}$ and $\mathbf{LCA} \subsetneqq  \mathbf{KA}\cap \mathfrak{RC}$.
\item[{\rm (4)}] $\mathbf{K}\cap \mathfrak{RC} \subsetneqq \mathbf{K}$ and $\mathbf{K}\cap \mathfrak{RC}\subsetneqq \mathfrak{RC}$.
\item[{\rm (5)}] Let $X\in \mathbf{K}$ and $Y\in \mathfrak{RC}$ and let $\phi: X\to Y$ be a homomorphism. If $\phi^+: X^+ \to Y^+, \phi^+(x):=\phi(x),$ is continuous, then $\phi$ is continuous.
\end{enumerate}
\end{proposition}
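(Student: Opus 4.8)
The plan is to treat (1) and (2) as purely functorial reformulations, read off (3) and (4) from standard examples, and reserve the real work for (5).

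For (1) the key observation is that the $k$-modification $k(\tau)$ is determined by the family of $\tau$-compact sets together with the topologies they induce. If $X\in\mathfrak{RC}$, then $\tau$ and $\tau^+$ have the same compact subsets, and on each such compact $K$ the identity $(K,\tau|_K)\to(K,\tau^+|_K)$ is a continuous bijection of a compact space onto a Hausdorff one, hence a homeomorphism, so $\tau|_K=\tau^+|_K$. Thus $k(\tau)$ and $k(\tau^+)$ are each the finest group topology agreeing, on every compact set, with the same induced topology, whence $k(\tau)=k(\tau^+)$, i.e. $(\mathbf{k}\circ\mathfrak{B})(X)=\mathbf{k}(X)$. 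Conversely, if $k(\tau)=k(\tau^+)$, then since $\tau$ and $k(\tau)$ (resp. $\tau^+$ and $k(\tau^+)$) share their compact sets, $X$ and $X^+$ share theirs, so $X\in\mathfrak{RC}$. Item (2) follows at once: $X\in\mathbf{K}$ means $\mathbf{k}(X)=X$, so by (1) $X\in\mathbf{K}\cap\mathfrak{RC}$ iff $(\mathbf{k}\circ\mathfrak{B})(X)=X$; for the converse one uses that $(\mathbf{k}\circ\mathfrak{B})(X)$ is always a $k$-group (idempotency of the coreflector), so the equality $(\mathbf{k}\circ\mathfrak{B})(X)=X$ forces $X\in\mathbf{K}$ and then (1) gives $X\in\mathfrak{RC}$.

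For (3), a precompact group satisfies $\mathfrak{B}(X)=X$, hence trivially lies in $\mathfrak{RC}$; properness follows from $\ZZ$ (discrete), which respects compactness by Glicksberg but is not precompact. The inclusion $\mathbf{LCA}\subseteq\mathbf{KA}\cap\mathfrak{RC}$ combines Glicksberg's theorem with the fact that locally compact groups are $k$-groups, and is proper because a dense countable subgroup of $\TT$ is a metrizable (hence $k$-) precompact Abelian group in $\mathfrak{RC}$ that is not locally compact. For (4), the group $c_0$ is metrizable, hence a $k$-group, but fails the Schur property and so does not respect compactness, giving $\mathbf{K}\cap\mathfrak{RC}\subsetneqq\mathbf{K}$; and $\ZZ^+$ is precompact (so in $\mathfrak{RC}$) yet its compact sets are finite, so its $k$-modification is discrete and $\ZZ^+\notin\mathbf{K}$, giving $\mathbf{K}\cap\mathfrak{RC}\subsetneqq\mathfrak{RC}$.

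The substantive item is (5), and I would prove it categorically using (1). Write $\tau,\sigma$ for the topologies of $X,Y$. Applying the functor $\mathbf{k}$ to the continuous map $\phi^+:X^+\to Y^+$ yields a continuous homomorphism $\mathbf{k}(\phi^+):\mathbf{k}(X^+)\to\mathbf{k}(Y^+)$, that is, $\phi:(X,k(\tau^+))\to(Y,k(\sigma^+))$ is continuous. Since $Y\in\mathfrak{RC}$, item (1) gives $k(\sigma^+)=k(\sigma)$, and as $\sigma\le k(\sigma)$ the codomain maps continuously onto $(Y,\sigma)$. The crux is to show $k(\tau^+)\le\tau$: every $\tau$-compact $K$ is $\tau^+$-compact with $\tau|_K=\tau^+|_K$, and $k(\tau^+)$ agrees with $\tau^+$ on all $\tau^+$-compact sets, so $k(\tau^+)$ agrees with $\tau$ on every $\tau$-compact set; by the maximality defining $k(\tau)=\tau$ (here $X\in\mathbf{K}$) this forces $k(\tau^+)\le\tau$. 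Hence the identity $(X,\tau)\to(X,k(\tau^+))$ is continuous, and composing $(X,\tau)\to(X,k(\tau^+))\xrightarrow{\phi}(Y,k(\sigma))\to(Y,\sigma)$ shows that $\phi:X\to Y$ is continuous. The main obstacle is exactly this interplay between the Bohr and the $k$-modifications — securing $k(\tau^+)\le\tau$ on the domain and $k(\sigma^+)=k(\sigma)$ on the codomain — which is where the two hypotheses $X\in\mathbf{K}$ and $Y\in\mathfrak{RC}$ are used. An equivalent, more hands-on route instead restricts $\phi$ to each compact $K\subseteq X$, uses $\tau|_K=\tau^+|_K$ together with the fact that $Y$ respects compactness to transport the compact set $\phi(K)$ back from $Y^+$ to $Y$, and then invokes the $k$-group property of $X$ to pass from continuity on all compacta to global continuity.
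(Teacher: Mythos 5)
Your proposal is correct, and while your items (1) and (2) match the paper's argument almost verbatim (including the use of idempotency of $\mathbf{k}$ in the converse of (2)), you take a genuinely different route elsewhere. For the strict inclusions in (3) and (4) the paper deliberately uses groups of the form $\mathfrak{F}_0(X)$: a compact totally disconnected metrizable $X$ gives, via Theorem \ref{t11} and Fact \ref{fTopPropF}(iii), a metrizable group respecting compactness that is neither precompact nor locally compact, and $\mathfrak{F}_0(\TT)$ witnesses $\mathbf{K}\cap\mathfrak{RC}\neq\mathbf{K}$; for the second inclusion in (4) it takes $(\ZZ,T_H)$ with $H$ a non-measurable subgroup of $\TT$, which requires the result of \cite{CRT} that such a topology has no nontrivial convergent sequences. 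Your witnesses --- discrete $\ZZ$, a countable dense subgroup of $\TT$, the Banach space $c_0$, and $\ZZ^+$ --- are all valid and more economical; in particular $\ZZ^+$ is cleaner than $(\ZZ,T_H)$, since finiteness of its compact subsets follows directly from Glicksberg's theorem applied to $\ZZ_d$, with no measure-theoretic input (the trade-off is only that the paper's choices also showcase the functor $\mathfrak{F}_0$, which is its theme). In (5) the paper argues pointwise: it restricts $\phi$ to an arbitrary compact $K\subseteq X$, uses $Y\in\mathfrak{RC}$ to transport the compact set $\phi^+(K)$ back from $Y^+$ to $Y$, notes that the relevant identity maps are homeomorphisms on compacta, concludes that $\phi$ is $k$-continuous, and finishes with Noble's theorem for $k$-groups. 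Your primary argument is instead a global comparison of topologies: apply the coreflector $\mathbf{k}$ to $\phi^+$, replace $k(\sigma^+)$ by $k(\sigma)$ using (1) on the codomain, and establish $k(\tau^+)\le\tau$ on the domain. That comparison is sound --- every $\tau$-compact set is $\tau^+$-compact with the same induced topology by the compact-to-Hausdorff argument, so $k(\tau^+)$ is one of the competing topologies in the maximality clause defining $k(\tau)=\tau$ --- but note that the functoriality of $\mathbf{k}$ you invoke at the first step is itself proved by exactly the paper's restriction-to-compacta argument, so your route repackages rather than shortcuts the paper's proof; what it buys is a sharper display of where each hypothesis enters ($Y\in\mathfrak{RC}$ only on the codomain, $X\in\mathbf{K}$ only on the domain), and you in any case record the paper's direct route as your stated alternative.
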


\begin{proof}
(1). If $X\in \mathfrak{RC}$, then $(\mathbf{k}\circ\mathfrak{B})(X) =\mathbf{k}(X)$ by the definition of respect compactness  and the definition of $\mathbf{k}(X)$.

Conversely, let $(\mathbf{k}\circ\mathfrak{B})(X) =\mathbf{k}(X)$ and $K$ is compact in $\mathfrak{B}(X)$. Then $K$ is compact in $(\mathbf{k}\circ\mathfrak{B})(X)$ by the definition of $k$-modification. So $K$ is compact in $\mathbf{k}(X)$. Hence,  by the definition of $k$-modification,  $K$ is compact  in $X$. Thus $X\in \mathfrak{RC}$.


(2). Let $X\in  \mathbf{K}\cap \mathfrak{RC}$. By item (1) and the definition of $k$-groups, we obtain $(\mathbf{k}\circ\mathfrak{B})(X) =\mathbf{k}(X)=X$.

Conversely, let $(\mathbf{k}\circ\mathfrak{B})(X) =X$. Since $\mathbf{k}\circ\mathbf{k}=\mathbf{k}$, the equalities
\[
\mathbf{k}(X)=\mathbf{k}\circ(\mathbf{k}\circ \mathfrak{B}(X)) =(\mathbf{k}\circ \mathfrak{B})(X) =X
\]
and item (1) imply that $X$ is a $k$-group and $X\in \mathfrak{RC}$.

(3). Since $\mathfrak{B}(K)=K$ for each precompact group $K$, the first inclusion follows. The second one holds by the Glicksberg theorem. To prove that these  inclusions are strict take an arbitrary compact totally disconnected metrizable group $X$. Then $\mathfrak{F}_0(X)$ is metrizable, and hence it is a $k$-group. Now Theorem \ref{t11} and Fact \ref{fTopPropF}(iii) imply that $\mathfrak{F}_0(X)$ respects compactness and it is not locally precompact. Thus the inclusions  are strict.

(4). Being metrizable the group $\mathfrak{F}_0(\TT)$ belongs to $\mathbf{KA}$. However, $\mathfrak{F}_0(\TT)$ does not respect compactness by Theorem \ref{t11}. Thus $\mathbf{K}\cap \mathfrak{RC} \not= \mathbf{K}$.

To prove that the second inclusion is strict it is enough to find a precompact Abelian group $X$ which is not a $k$-group.
Take an arbitrary non-measurable subgroup $H$ of the circle $\TT$ and set $X:=(\ZZ, T_H)$. Then  the precompact group $X$ does not contain non-trivial convergent sequences  (see \cite{CRT}). Since $X$ is countable, we obtain that $X$ also has no infinite  compact subsets by  \cite[3.1.21]{Eng}. This immediately implies that the $k$-modification $\mathbf{k}(X)$ of $X$ is discrete. Hence $\mathbf{k}(X)=\ZZ_d$ is a discrete LCA group. So $\mathbf{k}(X)\not= X$ and $X$ is not a $k$-group. Thus the second inclusion is  strict.

(5). Let $id_X: X\to X^+$ and $id_Y: Y\to Y^+$ be the identity continuous maps. Fix arbitrarily   a compact subset $K$ in $X$. Then $K^+ :=\phi^+(id_X(K))$ is compact in $Y^+$. As $Y\in \mathfrak{RC}$, $K^+$ is compact in $Y$. So $id_Y|_{K^+}$ is a homeomorphism.   Hence $\phi|_K = (id_Y|_{K^+})^{-1} \circ \phi^+\circ (id_X |_K)$ is continuous. So $\phi$ is $k$-continuous. As $X$ is a $k$-group, $\phi$ is continuous (see \cite{Nob2}).
\end{proof}

\begin{remark} {\em
In \cite{CTW}, the authors show that the answer to both questions posed in \cite[1.2]{CTW}  (see also \cite{Tri91}) is ``no''. Let us show that the group $X$ in the proof of item (4) of Proposition \ref{p61} also  answers negatively  to those questions. We use the notation from  \cite[1.2]{CTW}. Set $G=\mathbb{Z}$ and  $\mathcal{U}=T_H$. Since $G$ is countable, every locally compact group topology $\mathcal{T}$ on $G$ must be discrete. So $\mathcal{T}^+ = T_\mathbb{T}$. Further, as it was noticed in item (4), a subset $A$ of $G$ is $\mathcal{T}$-compact if and only if $A$ is $\mathcal{U}$-compact (if and only if $A$ is finite). However, since $H \not= \TT$, we obtain $\mathcal{U} \not= \mathcal{T}^+$ by Fact \ref{fBohr}. }
\end{remark}


We do not know an answer to the next questions:
\begin{problem} \label{problem5}
{\it Let $X\in \mathbf{KA}$ (or just a $k$-space). Is  $\mathfrak{F}_0(X)$ a $k$-group}?
\end{problem}

\begin{problem} 
{\it Let $X$ be a LCA group. Is  $\mathfrak{F}_0(X)$ a $k$-space}?
\end{problem}

(II) {\it The Schur property and $s$-groups.}
Similar to $k$-groups  we define $s$-groups (we follow \cite{Ga30}).
Let $(X,\tau)$ be a (Hausdorff) topological group and let $S$ be the set of all sequences in $(X,\tau)$ converging to the unit. Then there exists the finest Hausdorff group topology $\tau_S$ on the underlying group $X$ in which all sequences of $S$ converge to the unit.  If $\tau =\tau_S$, the group $X$ is called an {\it $s$-group}. The assignment $\mathbf{s}(X,\tau) := (X,\tau_S)$ is a coreflector from $\mathbf{TG}$ to the full subcategory $\mathbf{S}$ of all  $s$-groups. The class $\mathbf{S}$   contains all sequential groups \cite[1.14]{Ga30}. Every $s$-group is also a $k$-group \cite{Ga3}.  Note that $X$ and $\mathbf{s}(X)$ have the same set of convergent sequences \cite[4.2]{Ga30}. The family of all Abelian $s$-groups we denote by $\mathbf{SA}$.

Denote by  $\mathfrak{RS}$ the class of all MAP  groups which have the Schur property. In analogy to Proposition \ref{p61} we obtain:
\begin{proposition} \label{p62}
Let $X$ and $Y$ be MAP topological groups.
\begin{enumerate}
\item[{\rm (1)}] $X\in \mathfrak{RS}$ if and only if $(\mathbf{s}\circ\mathfrak{B})(X) =\mathbf{s}(X)$.
\item[{\rm (2)}] $X\in  \mathbf{S}\cap \mathfrak{RS}$ if and only if $(\mathbf{s}\circ\mathfrak{B})(X) =X$.
\item[{\rm (3)}] $\mathbf{S}\cap \mathfrak{RS} \subsetneqq \mathbf{S}$ and $\mathbf{S}\cap \mathfrak{RS}\subsetneqq \mathfrak{RS}$.
\item[{\rm (4)}] Let $X\in \mathbf{S}$ and $Y\in \mathfrak{RS}$ and let $\phi: X\to Y$ be a homomorphism. If $\phi^+: X^+ \to Y^+, \phi^+(x):=\phi(x),$ is continuous, then $\phi$ is continuous.
\end{enumerate}
\end{proposition}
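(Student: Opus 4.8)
The plan is to follow the proof of Proposition \ref{p61} almost verbatim, systematically replacing the coreflector $\mathbf{k}$ by $\mathbf{s}$, compact subsets by sequences converging to the unit, and the class $\mathfrak{RC}$ by $\mathfrak{RS}$. Three facts about $\mathbf{s}$ will play the role of the corresponding facts about $\mathbf{k}$: first, $X$ and $\mathbf{s}(X)$ have the same convergent sequences (the analogue of ``$\tau$ and $k(\tau)$ have the same compact sets''); second, $\mathbf{s}$ is idempotent, $\mathbf{s}\circ\mathbf{s}=\mathbf{s}$; and third, a homomorphism out of an $s$-group is continuous as soon as it is sequentially continuous (the analogue of the statement that a $k$-continuous map out of a $k$-group is continuous). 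All three are taken from \cite{Ga30}.

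For item (1), the key observation is that the topology $\mathbf{s}(Y)$ depends only on the set of sequences of $Y$ converging to the unit. If $X\in\mathfrak{RS}$, then $X$ and $\mathfrak{B}(X)=X^+$ have exactly the same null sequences, so $\mathbf{s}(X)$ and $\mathbf{s}(\mathfrak{B}(X))$ are finest Hausdorff group topologies determined by the same family of sequences; hence they coincide, i.e. $(\mathbf{s}\circ\mathfrak{B})(X)=\mathbf{s}(X)$. Conversely, assume $(\mathbf{s}\circ\mathfrak{B})(X)=\mathbf{s}(X)$ and let $\{x_n\}$ converge to the unit in $\mathfrak{B}(X)$. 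By definition of the $s$-modification, $\{x_n\}$ converges to the unit in $(\mathbf{s}\circ\mathfrak{B})(X)=\mathbf{s}(X)$, and since $X$ and $\mathbf{s}(X)$ share their convergent sequences, $\{x_n\}$ converges to the unit in $X$; as the reverse inclusion of null sequences is automatic (the identity $X\to X^+$ is continuous), this gives $X\in\mathfrak{RS}$. Item (2) is then the same two-line computation as in Proposition \ref{p61}(2): if $X\in\mathbf{S}\cap\mathfrak{RS}$ then $(\mathbf{s}\circ\mathfrak{B})(X)=\mathbf{s}(X)=X$ by (1); and if $(\mathbf{s}\circ\mathfrak{B})(X)=X$, then, using $\mathbf{s}\circ\mathbf{s}=\mathbf{s}$, the chain $\mathbf{s}(X)=\mathbf{s}\circ(\mathbf{s}\circ\mathfrak{B})(X)=(\mathbf{s}\circ\mathfrak{B})(X)=X$ shows that $X$ is an $s$-group, and (1) yields $X\in\mathfrak{RS}$.

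For item (3), the first strict inclusion is witnessed by $\mathfrak{F}_0(\TT)$: it is metrizable, hence sequential, hence an $s$-group by \cite[1.14]{Ga30}, but it fails the Schur property by Theorem \ref{t12}(ii), so $\mathfrak{F}_0(\TT)\in\mathbf{S}\setminus\mathfrak{RS}$. For the second strict inclusion I would reuse the group $X:=(\ZZ,T_H)$ from the proof of Proposition \ref{p61}(4), where $H$ is a non-measurable (hence dense) subgroup of $\TT$: being precompact it satisfies $\mathfrak{B}(X)=X$, so $X$ trivially has the Schur property and lies in $\mathfrak{RS}$, while it carries no non-trivial convergent sequence \cite{CRT}, whence $\mathbf{s}(X)$ is discrete and $X\notin\mathbf{S}$; thus $X\in\mathfrak{RS}\setminus\mathbf{S}$. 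Finally, item (4) mirrors Proposition \ref{p61}(5) with sequences in place of compact sets: given a sequence $x_n\to x$ in $X$, continuity of the identity $X\to X^+$ and of $\phi^+$ gives $\phi(x_n)=\phi^+(x_n)\to\phi^+(x)=\phi(x)$ in $Y^+$, and since $Y\in\mathfrak{RS}$ this convergence already holds in $Y$; hence $\phi$ is sequentially continuous, and as $X$ is an $s$-group, $\phi$ is continuous.

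The arguments are routine adaptations once these structural facts about $\mathbf{s}$ are in hand, so I do not expect a genuine obstacle. The one point requiring care is the appeal in item (4) to the universal property of the $s$-modification, namely that sequential continuity of a homomorphism on an $s$-group forces continuity; this is exactly the feature distinguishing the $\mathbf{s}$-coreflection from an arbitrary functor, and it must be invoked precisely (from \cite{Ga30}) rather than taken for granted.
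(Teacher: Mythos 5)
Your proposal is correct and follows essentially the same route as the paper: items (1) and (2) are the paper's argument verbatim (with the key fact that $X$ and $\mathbf{s}(X)$ share convergent sequences, \cite[Lemma 4.2]{Ga30}, and idempotency of $\mathbf{s}$), item (3) uses the paper's two witnesses $\mathfrak{F}_0(\TT)$ and $(\ZZ,T_H)$, and item (4) establishes sequential continuity of $\phi$ and invokes the $s$-group property exactly as the paper does, merely phrased directly on sequences rather than via the paper's restriction-to-a-compact-set formulation. The only substantive difference is to your credit: where the paper disposes of the second inclusion in (3) by a bare reference to the example in Proposition \ref{p61}(4), you supply the actual verification that $(\ZZ,T_H)\in\mathfrak{RS}\setminus\mathbf{S}$ (precompactness gives $\mathfrak{B}(X)=X$, and absence of nontrivial convergent sequences forces $\mathbf{s}(X)$ to be discrete), which is the detail the paper leaves implicit.
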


\begin{proof}
(1). If $X\in \mathfrak{RS}$, then $(\mathbf{s}\circ\mathfrak{B})(X) =\mathbf{s}(X)$ by the definition of respect sequentiality  and the definition of $\mathbf{s}(X)$.

Conversely, let $(\mathbf{s}\circ\mathfrak{B})(X) =\mathbf{s}(X)$ and $x_n \to e$ in $X^+$. Then $x_n \to e$ in $(\mathbf{s}\circ\mathfrak{B})(X)$ by \cite[Lemma 4.2]{Ga30}, and hence $x_n \to e$ in $\mathbf{s}(X)$. So $x_n \to e$ in $X$ by \cite[Lemma 4.2]{Ga30}. Thus $X\in \mathfrak{RS}$.

(2). Let $X\in  \mathbf{S}\cap \mathfrak{RS}$. By item (1) and the definition of $s$-groups, we obtain $(\mathbf{s}\circ\mathfrak{B})(X) =\mathbf{s}(X)=X$.

Conversely, let $(\mathbf{s}\circ\mathfrak{B})(X) =X$. Since $\mathbf{s}\circ\mathbf{s}=\mathbf{s}$, the equalities
\[
\mathbf{s}(X)=\mathbf{s}\circ(\mathbf{s}\circ \mathfrak{B}(X)) =(\mathbf{s}\circ \mathfrak{B})(X) =X
\]
and item (1) imply that $X$ is an $s$-group and $X\in \mathfrak{RC}$.

(3). Being metrizable the group $\mathfrak{F}_0(\TT)$ belongs to $\mathbf{SA}$. However, $\mathfrak{F}_0(\TT)$ does not respect sequentiality by Theorem \ref{t11}. Thus $\mathbf{S}\cap \mathfrak{RS} \not= \mathbf{S}$.
The example in the proof of Proposition \ref{p61}(4) shows also that the second inclusion is strict.

(4).  Let $id_X: X\to X^+$ and $id_Y: Y\to Y^+$ be the identity continuous maps. Fix arbitrarily   a  convergent sequence $\uuu$  with the limit point in $X$ (so $\uuu$ is a compact subset of $X$). Then $\uuu^+ :=\phi^+(id_X(\uuu))$ is a  convergent sequence with the limit point in $Y^+$. As $Y\in \mathfrak{RS}$,  $\uuu^+$ is a  convergent sequence in $Y$. So $id_Y|_{\uuu^+}$ is a homeomorphism.  Hence $\phi|_\uuu = (id_Y|_{\uuu^+})^{-1} \circ \phi^+\circ (id_X |_\uuu)$ is continuous. So $\phi$ is a sequentially continuous homomorphism. As $X$ is an $s$-group, $\phi$ is continuous  \cite{Ga30}.
\end{proof}
The next question is open:
\begin{problem} \label{problem6}
{\it Let $X\in \mathbf{SA}$ (in particular, the group $X$ is sequential). Is  $\mathfrak{F}_0(X)$ an $s$-group}?
\end{problem}

\section{Open questions}

In this section we define another functors naturally coming from Functional Analysis and pose some open questions.

(I)  {\it  Functors $\mathfrak{F}_0^I$, $\mathfrak{F}_\infty^I$ and $\mathfrak{F}^I$ on $\mathbf{TAG}$}. Let $I$ be an arbitrary infinite set of indices and $X$ be an Abelian topological group. The collection $\{ U^I : U \in \mathcal{N}(X)\}$ forms a base at $0$ for a group topology $\mathfrak{u}$ in $X^I$. We   call $\mathfrak{u}$  the {\it uniform topology}.

We denote by $c_0^I(X)$ the set of all $(x_i)_{i\in I} \in X^I$ such that, for every $U\in \mathcal{N}(X)$, $x_i \in U$ for all but finitely many indices.
The set of all $(x_i)_{i\in I} \in X^I$ such that the set $\{ x_i\}_{i\in I}$ is precompact in $X$, we denote by $\ell_\infty^I (X)$.

The uniform group topology on $c_0^I (X)$ and $\ell_\infty^I (X)$ induced from $(X^I, \mathfrak{u})$ we denote by  $\mathfrak{u}_0$ and $\mathfrak{u}_\infty$ respectively. In analogy to $\mathfrak{F}_0$ we define the functors $\mathfrak{F}_0^I$, $\mathfrak{F}_\infty^I$ and $\mathfrak{F}^I$ on  the category $\mathbf{TAG}$ by the assignment
\[
\begin{split}
X & \to \mathfrak{F}^I (X) :=(X^I , \mathfrak{u}),\\
X & \to \mathfrak{F}_0^I (X) :=(c_0^I (X), \mathfrak{u}_0),\\
X & \to \mathfrak{F}_\infty^I (X) :=(\ell_\infty^I (X), \mathfrak{u}_\infty).
\end{split}
\]
In the case $I=\NN$ we shall omit the subscript $I$.
If $X=\mathbb{R}$, then $\mathfrak{F}_\infty (\mathbb{R})$ coincides with the classical Banach space $\ell^\infty$. On the other hand, the group $\mathfrak{F} (\mathbb{R})$ is not a TVS. Clearly, $\mathfrak{F}^I$ and $\mathfrak{F}_\infty^I $ coincide on precompact groups.

It would be interesting to consider Problems \ref{problem1}-\ref{problem4}  for these functors. Let us note that it is not clear even whether $\mathfrak{F}_\infty^I (X)$ is connected for a compact connected group $X$, although it is known that $\mathfrak{F}_\infty (\TT)$ is even arc-connected.
Clearly, to obtain an analogue to Fact \ref{f1} and Theorem \ref{t11} we should describe the dual group of  $\mathfrak{F}_\infty^I (X)$.
\begin{problem}
{\it Let $X$ be a (compact) LCA group. Describe $\widehat{\mathfrak{F}_\infty^I (X)}$ or $\widehat{\mathfrak{F}_\infty (X)}$}.
\end{problem}
Note that these groups are very ``big''. For example, it is known that $\mathrm{Card}\left( \widehat{\mathfrak{F}_\infty (\TT)}\right) = 2^\mathfrak{c}$ (see \cite{DMPT13}). However, even for  the simplest case $X=\TT$ we know only the cardinality of the dual group $\widehat{\mathfrak{F}_\infty (\TT)}$, but not its appropriate description (as, for example, in Fact \ref{f3}).

The following question is of interest:
\begin{problem}
{\it Let $X$ be a (compact) LCA group and $I$ be an infinite set. Are the groups $\mathfrak{F}_0^I (X)$,  $\mathfrak{F}_\infty^I (X)$ and $\mathfrak{F}^I (X)$ reflexive}?
\end{problem}
By Fact \ref{f1} we know only that $\mathfrak{F}_0 (X)$ is reflexive.

(II)
Let $\mathcal{P}_1, \mathcal{P}_2 \in \mathfrak{P}$ be distinct properties and let $\Gamma$ be an arbitrary family of Abelian topological groups. We say that $\mathcal{P}_1 \leq_R \mathcal{P}_2$ on $\Gamma$ if every $X\in \Gamma$ which respects $\mathcal{P}_1$ respects also $\mathcal{P}_2$. If $\mathcal{P}_1 \leq_R \mathcal{P}_2$ and $\mathcal{P}_2 \leq_R \mathcal{P}_1$ on $\Gamma$ we say that $\mathcal{P}_1$ and $\mathcal{P}_2$ are {\it equivalent} on $\Gamma$. So, all properties from $\mathfrak{P}$ are equivalent on $\mathbf{LCA}$ by \cite{Tri91}, and all properties from $\mathfrak{P}_0$ are equivalent on the class of all nuclear groups by \cite{BaMP2}. Further, the properties $\mathcal{C}, \mathcal{CC}, \mathcal{PC}$ and $\mathcal{FB}$ are equivalent on the class of all complete Abelian $g$-groups by \cite[Theorem 3.3]{HM}. These results justify the next question (cf. \cite[Question 4.6]{Aus2}):
\begin{problem}
{\it Are the properties from $\mathfrak{P}$ or $\mathfrak{P}_0$ equivalent on the class of all (complete) MAP locally quasi-convex Schwartz groups}?
\end{problem}
Theorem \ref{t-Res} says that the answer to this question is ``yes'' if the next question has the positive answer:
\begin{problem}
{\it Let $X$ be a (complete) MAP locally quasi-convex Schwartz groups. Does $X$ have the $SB$-property}?
\end{problem}

As it was mentioned, there are  Schur groups  without the Glicksberg property \cite{Wil} (see a complete proof in \cite[Example 19.19]{Dom}).   Taking into account also Diagram \ref{diag} it seems to be interesting the next question.
\begin{problem}
{\it Let $\mathcal{P}_1, \mathcal{P}_2 \in \mathfrak{P}$ be distinct  properties and $\mathcal{P}_1 \leq_R \mathcal{P}_2$ on $\mathbf{MAPA}$. Find a MAP Abelian group $X$ which respects $\mathcal{P}_2$ but which does not respect $\mathcal{P}_1$.}
\end{problem}

It would be interesting also to consider Problems \ref{problem4},  \ref{problem5} and \ref{problem6} for other classes of MAP Abelian groups (as, for instance, Polish, nuclear or Schwartz MAP groups).

In connection with  Fact \ref{fGclosed} and Theorem \ref{t12}(iii) the next problem is of interest.
\begin{problem}
{\it Let $X$ be an Abelian topological group such that $c_0(X)$ is $\mathfrak{g}$-closed in $X^\NN$. Under which conditions the group $X$ has the Schur property?}
\end{problem}

\vspace{3mm}

{\bf Acknowledgments}.
I wish to thank  Professor D.~Dikranjan for the comment \cite{Dikr} and suggestions.  It is a pleasure to thank Professors S.~Hern\'{a}ndez, E.~Mart\'{\i}n-Peinador and V.~Tarieladze for pointing out the articles \cite{Her98, DMPT, MT}.


\begin{thebibliography}{}



\bibitem{ArT}
A.~V.~Arhangel'skii and M.~G.~Tkachenko, \emph{Topological groups and related strutures}, Atlantis Press/World Scientific, Amsterdam-Raris, 2008.


\bibitem{Aus2}
L.~Au{\ss}enhofer, On the Glicksberg theorem for locally quasi-convex Schwartz groups, Fundamenta Math. \textbf{201} (2008), 163--177.

\bibitem{ACDT}
{ L.~Aussenhofer, M.~J.~Chasco, X.~Dominguez and V.~Tarieladze}, On Schwartz groups, Studia Math. \textbf{181}  (2007), 199--210.


\bibitem{AG}
{L.~Au{\ss}enhofer and S.~S.~Gabriyelyan}, On reflexive group topologies on abelian groups of finite exponent, Arch. Math. \textbf{99} (2012), 583--588.




\bibitem{BGP}
T.~Banakh, S.~S.~Gabriyelyan and I.~Protasov, On uniformly discrete subsets in uniform spaces and topological groups, preprint.


\bibitem{Ban}
{W. Banaszczyk},  \emph{Additive subgroups of topological vector
spaces}, LNM 1466, Berlin-Heidelberg-New York 1991.


\bibitem{BCM}
W.~Banaszczyk, M.~J.~Chasco and E.~Mart\'{\i}n-Peinador, Open subgroups
and Pontryagin duality, Math. Z. \textbf{215} (1994), 195--204.

\bibitem{BaMP}
W.~Banaszczyk and E.~Mart\'{\i}n-Peinador, The Glicksberg theorem on weakly compact
sets for nuclear groups, in: E. Coplakova et al. (eds.), Papers on General Topology
and Applications (Amsterdam, 1994), Ann. New York Acad. Sci. 788, New York
Academy of Sciences, 1996, 34--39.

\bibitem{BaMP2}
W.~Banaszczyk and E.~Mart\'{\i}n-Peinador, Weakly pseudocompact subsets of nuclear
groups, J. Pure Appl. Algebra \textbf{138} (1999), 99--106.

\bibitem{BoRos}
T.~K.~Boehme and M.~Rosenfeld, An example of two compact Hausdorff Fr\'{e}chet spaces whose product is not  Fr\'{e}chet, J. London Math. Soc. \textbf{8} (1974), 339--344.

\bibitem{BMPT}
M.~Bruguera, E.~Mart\'{\i}n-Peinador and  V.~Tarieladze, Eberlein-\v{S}mulyan theorem for Abelian topological groups, J. London Math. Soc. \textbf{70} (2004), 341--355.




\bibitem{CDM}
{ M.~J.~Chasco, D.~Dikranjan and E.~Martin-Peinador}, A survey on reflexivity of abelian topological groups, Topology Appl. \textbf{159}  (2012), 2290--2309.


\bibitem{CMPT}
M.~J.~Chasco, E.~Mart\'{\i}n-Peinador and  V.~Tarieladze, On Mackey topology for groups, Studia Math. \textbf{132} (1999), 257--284.

\bibitem{CHTr}
W.~W.~Comfort, S.~Hern\'{a}ndez and F.~J.~Trigos-Arrieta, Relating a locally compact Abelian group to its Bohr compactification, Adv. Math. \textbf{120} (1996), 322--344.

\bibitem{CRT}
W.~Comfort, S.~Raczkowski and F.~J.~Trigos-Arrieta,  Making group topologies with,
and without, convergent sequences,  Appl. Gen.  Topology \textbf{7}  (2006), 109--124.


\bibitem{CoR}
W.~W.~Comfort and K.~A.~Ross, Topologies induced by groups of characters, Fund. Math.  \textbf{55} (1964), 283--291.

\bibitem{CR66}
W.~W.~Comfort and K.~A.~Ross, Pseudocompactness and uniform continuity in topological groups, Pacific J. Math.  \textbf{16} (1966), 483--496.


\bibitem{CS73}
W.~W.~Comfort  and V.~Saks, Countably compact groups and the finest totally bounded
topologies, Pacific J. Math. \textbf{49} (1973), 33--44.

\bibitem{CTW}
W.~Comfort, F.~J.~Trigos-Arrieta and T.-S. Wu,  The Bohr compactification, modulo a metrizable subgroup, Fundamenta Math. \textbf{143}  (1993), 119--136.


\bibitem{Dies}
J.~Diestel, \emph{Sequences and Series in Banach Spaces}, Springer-Verlag, New York, 1994.

\bibitem{Dik-cl}
D.~Dikranjan, Closure operators in topological groups related
to von Neumann's kernel,  Topology Appl. \textbf{153} (2006), 1930-1955.


\bibitem{Dikr}
D.~Dikranjan, For every non-compact LCA group $X$ the group $X^+$ is sequentially complete and non-pseudocompact, Private communication, 2013.







\bibitem{DMPT}
D.~Dikranjan, E.~Mart\'{\i}n-Peinador and  V.~Tarieladze, Group valued null sequences and metrizable non-Mackey groups, Forum Math. DOI: 10.1515/forum-2011-0099,  2012.

\bibitem{DMPT13}
D.~Dikranjan, E.~Mart\'{\i}n-Peinador and  V.~Tarieladze, Countable powers of compact Abelian groups in the uniform topology and cardinality of their dual groups,  arXiv:1305.7369 (2013).

\bibitem{DMT}
D.~Dikranjan, C.~Milan and A.~Tonolo,   A characterization of the MAP abelian groups,   J. Pure Appl. Algebra \textbf{197} (2005),  23--41.

\bibitem{DiM}
D.~Dikranjan and S.~A.~Morris, Subgroups of products of locally compact groups, Topology Proc. \textbf{26} (2001-2002), 533--544.

\bibitem{DPS}
D. Dikranjan, I. Prodanov and L. Stojanov, \emph{Topological groups.
Characters, dualities, and minimal group topologies}, Marcel
Dekker, Inc., New-York-Basel, 1990.


\bibitem{DikTka}
D.~Dikranjan and M.~Tka\v{c}enko, Sequentially complete groups: dimension and minimality, J. Pure Appl. Algebra \textbf{157} (2001), 215--239.



\bibitem{Dom}
X.~Dom\'{\i}nguez, \emph{Grupos Abelianos topol\'{o}gicos y sumabilidad}, Doctoral Dissertation, Universidad Complutense de Madrid, 2001.


\bibitem{vDou}
E.~van~Douwen, The product  of two countably  compact  topological  groups,  Trans.  Amer. Math. Soc. \textbf{262} (1980), 417--427.

\bibitem{Eng}
R.~Engelking,   \emph{General topology}, Panstwowe Wydawnictwo Naukowe, 1985.






\bibitem{Ga}
S.~S.~Gabriyelyan, On $T$-sequences and characterized subgroups, Topology Appl. \textbf{157} (2010), 2834--2843.

\bibitem{Ga2}
{ S.~S.~Gabriyelyan}, Characterizable groups: some results and open questions, Topology Appl. \textbf{159} (2012), 2378--2391.



\bibitem{Ga30}
S.~S.~Gabriyelyan, Topologies on groups determined by sets of convergent sequences, J. Pure Appl. Algebra \textbf{217} (2013), 786--802.


\bibitem{Ga3}
S.~S.~Gabriyelyan, On a generalization of Abelian sequential groups, Fundamenta Math. \textbf{221} (2013), 95--127.

\bibitem{Ga7}
S.~S.~Gabriyelyan, On characterized subgroups of Abelian topological groups $X$ and the  group of all $X$-valued null sequences, Comment. Math. Univ. Carol. to appear.

\bibitem{Ga8}
S.~S.~Gabriyelyan, On reflexivity of the group of the null sequences valued in an Abelian topological group, Comment. Math. Univ. Carolin. to appear.

\bibitem{GaH}
J.~Galindo and S.~Hern\'{a}ndez, Interpolation sets and the Bohr topology of locally compact groups, Adv. Math. \textbf{188} (2004), 51--68.

\bibitem{GiJ}
{L.~Gillman and M.~Jerison}, \emph{Rings of continuous functions}, Van Nostrand, New York, 1960.


\bibitem{Gli}
I.~Glicksberg,  Uniform boundedness for groups, Can. J. Math.  \textbf{14} (1962), 269--276.

\bibitem{Gov}
W.~Govaerts, A productive class of angelic spaces, J. London Math. Soc. \textbf{22} (1980), 355--364.

\bibitem{Gro}
A.~Grothendieck, \emph{Topological vector spaces}, Gordon and Breach, New York, 1973.

\bibitem{HaMill}
K.~Hart and J.~van~Mill, A countably compact group $H$ such that $H\times H$ is not countably compact, Trans. Amer. Math. Soc. \textbf{323} (1991), 811--821.

\bibitem{Her98}
S.~Hern\'{a}ndez, The dimension of an LCA group in its Bohr topology, Topology Appl. \textbf{86} (1998), 63--67.

\bibitem{Her}
S.~Hern\'{a}ndez, The Bohr topology of discrete non Abelian groups, J. Lie Theory \textbf{18} (2008), 733--746.

\bibitem{HGM}
S.~Hern\'{a}ndez, J.~Galindo and S.~Macario, A characterization of the Schur property by means of the Bohr topology, Topology Appl. \textbf{97} (1999), 99--108.

\bibitem{HM}
S.~Hern\'{a}ndez and S.~Macario, Invariance of compactness for the Bohr topology, Topology Appl. \textbf{111} (2001), 161--173.

\bibitem{HR1}
 E.~Hewitt and K.~A.~Ross,  \emph{Abstract Harmonic Analysis}, Vol. I,
2nd ed. Springer-Verlag, Berlin, 1979.



\bibitem{Kothe}
G.~K\"{o}the, \emph{Topological vector spaces}, Vol. I, Springer-Verlag, Berlin, 1969.


\bibitem{Maly}
V.~I.~Malykhin, Nonpreservation of properties of topological groups on taking their square, Sib. Math. J. \textbf{28}  (1987), 639--645.

\bibitem{MT}
E.~Mart\'{\i}n-Peinador and  V.~Tarieladze, A property of Dunford-Pettis type in topological groups, Proc. Amer. Math. Soc. \textbf{132} (2003), 1827--1837.

\bibitem{Neu}
{J.~von~Neumann}, Almost periodic functions in a group, Trans. Amer. Math. Soc. \textbf{36} (1934), 445--492.


\bibitem{Nob}
N.~Noble, The continuity of functions on Cartesian products, Trans. Amer. Math. Soc. \textbf{149} (1970), 187--198.

\bibitem{Nob2}
N.~Noble, $k$-groups and duality, Trans. Amer. Math. Soc. \textbf{151} (1970), 551--561.




\bibitem{ReT}
D.~Remus and F.~J.~Trigos-Arrieta, Abelian groups which satisfy Pontryagin duality need not respect compactness, Proc.  Amer. Math. Soc. \textbf{117} (1993), 1195--1200.

\bibitem{ReT2}
D.~Remus and F.~J.~Trigos-Arrieta, Locally convex spaces as subgroups of products of locally compact abelian groups,  Math. Japon. \textbf{46} (1997), 217--222.

\bibitem{ReT3}
D.~Remus and F.~J.~Trigos-Arrieta, The Bohr topology of Moore groups,  Topology Appl. \textbf{97} (1999), 85--98.

\bibitem{Rol}
S.~Rolewicz, Some remarks on monothetic groups, Colloq. Math. \textbf{XIII} (1964), 28--29.

\bibitem{Sha}
{D.~Shakhmatov}, Convergence in the presence of algebraic structure, Recent progress in general topology, II, North-Holland, Amsterdam, 2002, 463--484.

\bibitem{Shi}
A.~Shibakov, Countable Fr\'{e}chet topological groups under CH, Topology Appl. \textbf{91} (1999), 119--139.


\bibitem{SiT}
O.~V.~Sipacheva and M.~G.~Tkachenko, Thin and bounded subsets of free topological groups, Topology Appl.  \textbf{36} (1990), 143--156.

\bibitem{Smith}
M.~F.~Smith, The Pontrjagin duality theorem in linear spaces, Ann. Math. \textbf{56} (1952), 248--253.

\bibitem{Tka87}
M.~G.~Tkachenko, Boundedness and pseudocompactness in topological groups, Math. Notes \textbf{41} (1987), 229--231.

\bibitem{Tka88}
M.~G.~Tkachenko, Compactness type properties in topological groups, Czechoslovak Math. J. \textbf{38} (1988), 324--341.

\bibitem{Tomit}
A.~H.~Tomita, On finite powers of countably compact groups, Comment. Math. Univ. Carolin. \textbf{37} (1996), 617--626.

\bibitem{TriAr91}
F.~J.~Trigos-Arrieta, Pseudocompactness on groups, in: General Topology and Applications, Fifth Northeast Conference, Marcel Dekker, New York, 1991, 369--378.

\bibitem{Tri91}
F.~J.~Trigos-Arrieta, Continuity, boundedness, connectedness and the Lindel\"{o}f property for topological groups, J, Pure Appl. Algebra \textbf{70} (1991), 199--210.

\bibitem{Tri94}
F.~J.~Trigos-Arrieta, Every uncountable Abelian group admits a non-normal group topology, Proc. Amer. Math. Soc. \textbf{122} (1994), 907--909.

\bibitem{Ven}
R.~Venkataraman, Compactness in abelian topological groups, Pacific J. Math. \textbf{57} (1975), 591--595.


\bibitem{Wil}
A.~Wilansky, \emph{Modern methods in topological vector spaces}, McGraw-Hill, 1978.

\bibitem{WuR}
T.-S.~Wu and L.~Riggins, Maximally almost periodic groups and a theorem of Glicksberg,
in: S. Andima et al. (eds.), Papers on General Topology and Applications
(Gorham, ME, 1995), Ann. New York Acad. Sci. 806, New York Academy of Sciences,
1996, 454--464.


\end{thebibliography}
\end{document}